\documentclass{article}
\usepackage{amsmath,amssymb,amsthm}

\newtheorem{Theorem}{Theorem}[section]
\newtheorem{Proposition}[Theorem]{Proposition}
\newtheorem{Lemma}[Theorem]{Lemma}
\newtheorem{Corollary}[Theorem]{Corollary}
\newtheorem{Definition}{Definition}[section]
\newtheorem{Remark}{Remark}[section]

\usepackage[colorlinks=true,linkcolor=blue,citecolor=green]{hyperref}

\usepackage[margin=4cm]{geometry}
\newlength{\bibitemsep}\setlength{\bibitemsep}{.0\baselineskip plus .05\baselineskip minus .05\baselineskip}
\newlength{\bibparskip}\setlength{\bibparskip}{0pt}
\let\oldthebibliography\thebibliography
\renewcommand\thebibliography[1]{%
  \oldthebibliography{#1}%
  \setlength{\parskip}{\bibitemsep}%
  \setlength{\itemsep}{\bibparskip}%
}

\numberwithin{equation}{section}

\begin{document}

\title{\Large\bf The gyrokinetic limit for the two dimensional Vlasov-Poisson system with multi-point charges}
\author{Jingpeng Wu\thanks{Department of Applied Mathematics, Nanjing Forestry University, Nanjing, 210037, China ({\protect e-mail:\url{jp.wu@njfu.edu.cn}}).}}
\date{}
\maketitle

\begin{abstract}
In this article, we investigate the gyrokinetic limit for the two dimensional Vlasov-Poisson system with multi-point charges. We show that the solution converges to a measure-valued solution of the Euler equation with a defect measure, which extends the results of Miot (Nonlinearity 32(2):654-677, 2019) to the case of multi-point charges and removes the smallness condition $\sup_{0<\varepsilon<1}\|f_{\varepsilon}^0\|_{L^1}<1$. The main difficulty arises from the fact that the orbits of point charges will intersect frequently and rapidly as the magnetic field intensity becomes large. To overcome the problem, we adopt the techniques recently developed by Wu and Zhang (Journal of Statistical Physics 190:183, 2023) combined with a new technique introduced here.

\noindent{\bf Keywords:} Vlasov equations; large data; gyrokinetic limit; Euler equation; defect measure; Dirac mass

\noindent{\bf MR Subject Classification:} 35Q83; 82D10; 35B40; 35A24.
\end{abstract}

\section{Introduction and main results}

In this paper, we investigate the gyrokinetic limit for the  Vlasov-Poisson system with $N$ point charges, so called the Plasma-Charge model, which is described by the asymptotic behaviour of solutions to the following equations as $\varepsilon$ tends to zero:
\begin{equation}\label{eq-VP-point}\tag{$\mathbf{E}^{\varepsilon}$}
\left\{
\begin{split}
&\partial_tf_{\varepsilon}+\frac{v}{\varepsilon}\cdot\nabla_xf_{\varepsilon}+\left(\frac{v^{\perp}}{\varepsilon^2}+\frac{E_{\varepsilon}+\gamma F_{\varepsilon}}{\varepsilon}\right)\cdot\nabla_vf_{\varepsilon}=0,\\
&E_{\varepsilon}(t,x)=\int_{\mathbb{R}^2}\frac{x-y}{\vert x-y\vert^2}\rho_{\varepsilon}(t,y)\,\mathrm{d}y,\quad\rho_{\varepsilon}=\int_{\mathbb{R}^2}f_{\varepsilon}\,\mathrm{d}v,\\
&\dot{\xi}_{\alpha,\varepsilon}(t)=\frac{\eta_{\alpha,\varepsilon}(t)}{\varepsilon},\quad\dot{\eta}_{\alpha,\varepsilon}(t)=\gamma\left(\frac{\eta_{\alpha,\varepsilon}^{\perp}(t)}{\varepsilon^2}+\frac{E_{\varepsilon}(t,\xi_{\alpha,\varepsilon})}{\varepsilon}\right)+\frac{1}{\varepsilon}F_{\varepsilon}(t,\xi_{\alpha,\varepsilon}(t)),\\
&(f_{\varepsilon},\xi_{\alpha,\varepsilon},\eta_{\alpha,\varepsilon})\vert_{t=0}=(f_{\varepsilon}^{0},\xi_{\alpha,\varepsilon}^0,\eta_{\alpha,\varepsilon}^0),\quad\alpha=1,2,\dots,N.
\end{split}\right.
\end{equation}
with $F_{\varepsilon}$ defined by
\begin{equation*}
F_{\varepsilon}(t,x)=\left\{\begin{split}
\sum_{\alpha}\frac{x-\xi_{\alpha,\varepsilon}(t)}{\vert x-\xi_{\alpha,\varepsilon}(t)\vert^2},\quad&\text{for}\,\,x\notin\{\xi_{\alpha,\varepsilon}(t)\},\\
\sum_{\beta:\beta\ne\alpha}\frac{\xi_{\alpha,\varepsilon}(t)-\xi_{\beta,\varepsilon}(t)}{\vert\xi_{\alpha,\varepsilon}(t)-\xi_{\beta,\varepsilon}(t)\vert^2},\quad&\text{for}\,\,x=\xi_{\alpha,\varepsilon}(t).
\end{split}\right.
\end{equation*}
Here $f_{\varepsilon}=f_{\varepsilon}(t,x,v)$ is the  phase space density of the plasma particles at time $t\ge 0$, located at $x\in\mathbb{R}^2$  with  velocity  $v\in\mathbb{R}^2$. The $\alpha$-th point charge is located at  $\xi_{\alpha,\varepsilon}(t)$ with velocity $\eta_{\alpha,\varepsilon}(t)$. All the particles are submitted to the self-consistent electric field $E_{\varepsilon}(t,x)$, to the singular field $F_{\varepsilon}(t,x)$ induced by the point charges and to a large external  magnetic field, orthogonal to the plane, which is represented by the terms  $v^{\perp}/\varepsilon^2$ or  $\eta_{\alpha,\varepsilon}^{\perp}/\varepsilon^2$, here $(v_{1},v_{2})^{\perp}=(-v_{2},v_{1})$. The real number $\gamma\gtrless 0$ corresponds to a repulsive or attractive interaction between the plasma and point charges. In this paper, we consider the repulsive case and set $\gamma=1$ for convenience.

Roughly speaking, when $\varepsilon\to 0$, the solutions of the system \eqref{eq-VP-point} converges to the measure-valued solutions of the following Euler equation with a defect measure $\nu \in L^{\infty}_t\mathcal{M}_x$:
\begin{equation}\label{eq-mv-E-defect}\tag{$\mathbf{E}_{\nu}^0$}
\left\{
\begin{split}
&\partial_t(\rho+\bar{\delta})+E_{\rho+\bar{\delta}}^{\perp}\cdot\nabla(\rho+\bar{\delta})=\nabla^2:\nu,\\
&E_{\rho+\bar{\delta}}=\frac{x}{\vert x\vert^2}*(\rho+\bar{\delta}),\quad\bar{\delta}(t,x)=\sum_{\alpha}\delta_{\xi_{\alpha}(t)}(x),
\end{split}\right.
\end{equation}
and if the defect measure $\nu$ is vanishing, \eqref{eq-mv-E-defect} reduces further to the vortex-wave system:
\begin{equation}\label{eq-Vortex-Wave}\tag{$\mathbf{E}_0^0$}
\left\{
\begin{split}
&\partial_t\rho+(E_{\rho}+F)^{\perp}\cdot\nabla\rho=0,\quad E_{\rho}=\frac{x}{\vert x\vert^2}*\rho,\\
&\dot{\xi}_{\alpha}(t)=\big(E_{\rho}(t,\xi_{\alpha}(t))+F(t,\xi_{\alpha}(t))\big)^{\perp},
\end{split}\right.
\end{equation}
with $F$ defined by
\begin{equation*}
F(t,x)=\left\{\begin{split}
\sum_{\alpha}\frac{x-\xi_{\alpha}(t)}{\vert x-\xi_{\alpha}(t)\vert^2},\quad&\text{for}\,\,x\notin\{\xi_{\alpha}(t)\},\\
\sum_{\beta:\beta\ne\alpha}\frac{\xi_{\alpha}(t)-\xi_{\beta}(t)}{\vert\xi_{\alpha}(t)-\xi_{\beta}(t)\vert^2},\quad&\text{for}\,\,x=\xi_{\alpha}(t).
\end{split}\right.
\end{equation*}

\subsection{Background}

The classical Vlasov-Poisson system, \eqref{eq-VP-point} without charge nor magnetic field, has been widely studied, see e.g., \cite{Ars73} for weak solutions, \cite{UO78} for classical solutions in two dimensions, \cite{Pfa92,LP91} for classical solutions in three dimensions, \cite{Loe06,Mio16CMP} for the uniqueness of weak solutions. 

The Plasma-Charge model, \eqref{eq-VP-point} without magnetic field was first introduced by Caprino and Marchioro \cite{CM10}, in which the existence and uniqueness of the classical solution have been established  for the repulsive case $\gamma>0$ and for initial data being vacuum around point charges and satisfying some integrable conditions as in Theorem~\ref{thm-main} below. For three dimensional case, the well known Lagrangian method of \cite{Pfa92} and the Eulerian method of \cite{LP91} were developed in \cite{MMP11} and \cite{DMS15} respectively. The analysis in \cite{CM10,MMP11} relies on the bound of a pointwise energy function defined as $\bar{h}=\frac{1}{2}\vert v-\eta_{\alpha}\vert^2+\gamma G_d(x-\xi_{\alpha})$ with $G_d$ the Green function of $-\Delta$ in $\mathbb{R}^d$, which is able to handle the strictly positive distance between the plasma and the point charges, and the singular field $F_{\varepsilon}$ is always regular. Such mechanism is not applicable to the attractive case $\gamma<0$, which makes it more difficult to study. Nevertheless, the existence of classical solutions in $\mathbb{R}^2$ for $\gamma<0$ was proved in \cite{CMMP12}, based on some special properties of the logarithmic potential $G_2(x)=-\frac{1}{2\pi}\ln\vert x\vert$.

When adding an external magnetic field, the existence and uniqueness of classical solutions  to \eqref{eq-VP-point} for $\gamma>0$, might be proved by adapting the methods in \cite{UO78,Pfa92,CM10,MMP11} in dimension $2$ and $3$. When considering the asymptotic behaviour of solutions to \eqref{eq-VP-point} as $\varepsilon\to 0$, the case without point charge has been studied by Golse and Saint-Raymond \cite{GS99}, which established the convergence of \eqref{eq-VP-point} to the incompressible Euler equation with a defect measure \eqref{eq-mv-E-defect} (see also Brenier's work \cite{Bre00} in a different scaling). Later, Saint-Raymond \cite{Sai02} proved that the defect measure $\nu$ vanishes for sufficiently regular initial data. The same kind of convergence results as in \cite{GS99,Sai02} are obtained in \cite{Mio16} thanks to different techniques. Besides the already mentioned articles, there is a wide literature devoted to various asymptotical regimes for linear or nonlinear Vlasov-like equations with strong external magnetic field, leading to different nonlinear equations (see e.g., \cite{FS98,FS00,Sai00,FS01,GS03,BOS09,FR16}).

Recently Miot \cite{Mio19} investigated the gyrokinetic limit of \eqref{eq-VP-point} with a point charge for $\gamma>0$ and proved that the solution converges to a measure-valued  solution of the Euler equation with a defect measure \eqref{eq-mv-E-defect}. Furthermore, the limiting equation exactly yields the vortex-wave system \eqref{eq-Vortex-Wave} if the defect measure vanishes and under more regularity assumptions on $\rho, \xi$. Recalling that the vortex-wave system \eqref{eq-Vortex-Wave} was introduced by Marchioro and Pulvirenti \cite{MP91} and there are quite rich research literatures, which are not listed here.

\subsection{Contributions of the paper}

The aim of this article is to extend the results in \cite{Mio19} to the system \eqref{eq-VP-point} with $N>1$. Such extension seems non-trivial. Indeed, there are many works that have been only proven for the case with a single point charge, see e.g., \cite{DMS15,XZ21,AW21,PW21,PWY22,HW22}. To the best of the author's knowledge, it is unknown whether these results can be extended to the case with multi-point charges, and the difficulties posed by multi-point charges are varied. In the fluid theory, the generalization of the study on the fluid models coupled with single object to multi objects also seems to present complex difficulties, see e.g., for the vortex-wave system, the extension of \cite{LM09} to \cite{LM21}, and for the fluid-rigid body system, \cite{GLS14,GLS16,GMS18} to \cite{GS19}.

Our extension of \cite{Mio19} is based on two techniques. The first technique is introduced in \cite{WZ23}, which is used to extend the moment propagation of \cite{DMS15} to \eqref{eq-VP-point} with $N>1$ in three dimensions. The main idea in \cite{WZ23} is to find a suitable pointwise energy functional, whose derivative does not contain the principal singular terms, and analogous thought has already manifested in \cite{LM21} for the analysis of the vortex-wave system. The approach in \cite{WZ23} can be applied to estimate the time integral of the electric field along the trajectories of the point charges by more careful calculation, which allows us to remove the condition $\sup_{0<\varepsilon<1}\|f_{\varepsilon}^0\|_{L^1}<1$ in \cite{Mio19}, see Sec.~\ref{sec-est-point-singular-field}. 

The second technique is introduced in this paper for the proof of the continuity or almost continuity of the trajectories of the point charges in Sec.~\ref{sec-time-regularity}. Firstly, thanks to the new estimates established in Corollary~\ref{coro-key-L0}, we reprove the weak formula in Proposition~\ref{prn-weak-form} with an improved estimate on the remainder term. Secondly, the strong magnetic field causes the orbits of point charges to cross each other rapidly, which brings a problem different from the case $N=1$. In order to describe the problem more clearly, we consider a simple model in Sec.~\ref{sec-2} and give a detailed explanation in Remark~\ref{rem-sec-2-rem-2-2}. Hence the contradiction argument in \cite[\S.2.5]{Mio19} is not enough to obtain the almost time continuity of the trajectory of each point charge, which can only restore the time continuity of the trajectory of the center of the point charges, i.e., $\vert\sum_{\alpha}\xi_{\alpha,\varepsilon}(t)-\sum_{\beta}\xi_{\beta,\varepsilon}(s)\vert\lesssim (t-s)^{1/4}$, see Lemma~\ref{lem-min-continuity-point-charge} and Remark~\ref{rem-4-1}. To overcome this problem, we observe that within a small and non-uniform time period, the continuity of each point charge can be deduced from Lemma~\ref{lem-min-continuity-point-charge}. Note the later is global in time, hence it is able to iterate this process with a non-uniform time period $O(\varepsilon^{m^{-k}})$ for some $m>1$, which converges to $O(1)$ as the number of iteration steps $k\to\infty$, to prove finally the continuity for each point charge on small but uniform time interval, see Lemma~\ref{lem-small-time-continuity-point-charge}.

Besides, due to the improved estimates in Proposition~\ref{prn-continuity-point-charge}, the compactness argument in \cite{Mio19} can be simplified to some extent, without using the discontinuous version of Arzel\`a-Ascoli theorem, see Sec.~\ref{sec-proof-of-main-thm}.

\subsection{Notations and Definitions}

\begin{itemize}
\item When without specifying, C always denotes a positive constant depending only on the constants $N,T,K_0,K_1$ given in Theorem~\ref{thm-main}.
\item For $A,B\in\mathbb{R}^{m\times n}$, $A:B=\sum_{1\le i\le m,\,1\le j\le n} A_{ij}B_{ij}$.
\item $\delta_{z}$ denotes the Dirac measure on $\mathbb{R}^2$, concentrated at $\{z\}$. 
\item A space of functions from domain $X$ to range $Y$ denotes as the form $L(X,Y)$, or $L(X)$ if $Y=\mathbb{R}$, or $L_+(X)$ if $Y=\mathbb{R}_+$. The norms of Lebesgue spaces $L^p(\Omega)$ denote $\|\cdot\|_{L^p(\Omega)}$ or $\|\cdot\|_{p}$ when $\Omega$ is clear from the context. $\mathcal{M}$ denotes the space of bounded real Radon measures. $C_0$ denotes the completion of $C_c$, the space of continuous functions with compact supports, with respect to $\|\cdot\|_{\infty}$. We always equip the vague topology i.e.~weak-* topology $\sigma(\mathcal{M},C_0)$ on $\mathcal{M}$. 
\item When the index set $\mathcal{I}$ and the index $i\in\mathcal{I}$ are clear, we denote $\{a_i\}$ as $\{a_i\}_{i\in\mathcal{I}}$.
\item We define the following summation signs for short:
\begin{equation*}
\sum_{\alpha}=\sum_{1\le\alpha\le N},\quad\sum_{\beta\colon\beta\ne\alpha}=\sum_{1\le\beta\le N,\beta\ne\alpha},\quad\sum_{\alpha\ne\beta}=\sum_{1\le\alpha\le N}\sum_{\beta\colon\beta\ne\alpha}.
\end{equation*}
\end{itemize}

Throughout this paper, we adopt the notion of weak solutions for \eqref{eq-VP-point} introduced in \cite{CM10}, in which global well-posedness has been established for each $\varepsilon>0$.
\begin{Definition}[Weak solutions to \eqref{eq-VP-point}]
For initial data satisfying
\begin{align*}
&f_{\varepsilon}^{0}\in L^1\cap L^{\infty}(\mathbb{R}^4),\quad f_{\varepsilon}^{0}\ge 0,\quad\operatorname{supp}f_{\varepsilon}^{0}\subset\{(x,v)\in B(0,R_{\varepsilon}):\min_{\alpha}\vert x-\xi_{\alpha,\varepsilon}^0\vert\ge \sigma_{\varepsilon}\},
\end{align*}
$(f_{\varepsilon},\{\xi_{\alpha,\varepsilon}\})$  is said to be a global weak solution of \eqref{eq-VP-point} if: $f_{\varepsilon},\{\xi_{\alpha,\varepsilon}\}$ satisfy the differential equations in \eqref{eq-VP-point} in the distributional sense and 
\begin{align*}
&f_{\varepsilon}\in L^{\infty}(\mathbb{R}_+,L^1\cap L^{\infty}(\mathbb{R}^4)),\quad f_{\varepsilon}\ge 0,\\
&\operatorname{supp}f_{\varepsilon}(t)\subset\{(x,v)\in B(0,R_{\varepsilon}(t)):\min_{\alpha}\vert x-\xi_{\alpha,\varepsilon}(t)\vert\ge \sigma_{\varepsilon}(t)\},
\end{align*}
for some $R_{\varepsilon}(t)>0$, $\sigma_{\varepsilon}(t)>0$,  which are continuous in time and depend only on the initial data.
\end{Definition}

\begin{Definition}
The energy associated to \eqref{eq-VP-point} is defined as
\begin{align*}
\mathcal{E}(f,\{\xi_{\alpha},\eta_{\alpha}\})=&\frac{1}{2}\iint_{\mathbb{R}^2\times\mathbb{R}^2}\vert v\vert^2f\,\mathrm{d}x\,\mathrm{d}v+\frac{1}{2}\sum_{\alpha}\vert\eta_{\alpha}\vert^2-\frac{1}{2}\iint_{\mathbb{R}^2\times\mathbb{R}^2}\ln\vert x-y\vert\rho(x)\rho(y)\,\mathrm{d}x\,\mathrm{d}y\\
&-\sum_{\alpha}\int_{\mathbb{R}^2}\ln\vert x-\xi_{\alpha}\vert\rho(x)\,\mathrm{d}x-\frac{1}{2}\sum_{\alpha\ne\beta}\ln\vert \xi_{\alpha}-\xi_{\beta}\vert.
\end{align*}
The momentum is defined as
\begin{equation*}
\mathcal{I}(f,\{\xi_{\alpha},\eta_{\alpha}\})=\int_{\mathbb{R}^2}(\vert x+\varepsilon v^{\perp}\vert^2-\varepsilon^2\vert v\vert^2)f\,\mathrm{d}x\,\mathrm{d}v+\sum_{\alpha}(\vert \xi_{\alpha}+\varepsilon\eta_{\alpha}^{\perp}\vert^2-\varepsilon^2\vert\eta_{\alpha}\vert^2).
\end{equation*}
The moment of inertia is defined as
\begin{equation*}
\mathcal{I}_{\rm ine}(f_{\varepsilon},\{\xi_{\alpha,\varepsilon},\eta_{\alpha,\varepsilon}\})=\int_{\mathbb{R}^2}\vert x\vert^2f_{\varepsilon}\,\mathrm{d}x\,\mathrm{d}v+\sum_{\alpha}\vert \xi_{\alpha,\varepsilon}\vert^2.
\end{equation*}
\end{Definition}

Since the solutions to \eqref{eq-mv-E-defect} are measure-valued, we adopt the formulation introduced in \cite{Del91,Sch95,Pou02,Mio19} to clarify the nonlinear term $E_{\rho+\bar{\delta}}^{\perp}\cdot\nabla(\rho+\bar{\delta})=\nabla\cdot \big(E_{\rho+\bar{\delta}}^{\perp}(\rho+\bar{\delta})\big)$.
\begin{Definition}\label{def-symmetric-quadratic-form}
Let $\rho,\mu\in \mathcal{M}_{+}(\mathbb{R}^{2})$. For all $\Phi\in C_{c}^{\infty}(\mathbb{R}^{2})$, the symmetric quadratic form $\mathcal{H}_{\Phi}$ is defined  by
\begin{equation*}
\mathcal{H}_{\Phi}[\rho,\mu]=\frac{1}{2}\iint_{\mathbb{R}^4}H_{\Phi}(x,y)\,\rho(\mathrm{d}x)\,\mu(\mathrm{d}y),
\end{equation*}
where
\begin{equation*}
H_{\Phi}(x,y)=\frac{(x-y)^{\perp}}{\vert x-y\vert^2}\cdot\big(\nabla\Phi(x)-\nabla\Phi(y)\big)\,\text{ if } x\ne y,\quad H_{\Phi}(x,x)=0.
\end{equation*}

The notation $\nabla\cdot(E_{\rho}^{\perp}\rho)$ then is a distribution defined by
\begin{equation}\label{eq-symmetric-form}
\langle\nabla\cdot(E_{\rho}^{\perp}\rho),\Phi\rangle_{\mathcal{D}'(\mathbb{R}^2),\mathcal{D}(\mathbb{R}^2)}:=-\mathcal{H}_{\Phi}[\rho,\rho].
\end{equation}
\end{Definition}

It is obvious that $H_{\Phi}$ is  well defined and bounded on $\mathbb{R}^{2}\times \mathbb{R}^{2}$, vanishes at infinity, and is continuous outside the diagonal $\{(x,x)\vert x\in\mathbb{R}^{2}\}$. If $\rho\in L^{p}(\mathbb{R}^2)$ for $p>2$ such that $E_{\rho}\rho$ is locally integrable, it is obvious that the distribution $\nabla\cdot(E_{\rho}^{\perp}\rho)$ defined above is equivalent to the distributional gradient of $E_{\rho}^{\perp}\rho$.

\begin{Definition}[Weak solutions to \eqref{eq-mv-E-defect}]
$(\rho,\bar{\delta})$ is said to be a weak solution to \eqref{eq-mv-E-defect} with a defect measure $\nu$ if: 
\begin{align*}
&\rho,\bar{\delta}\in L^{\infty}(\mathbb{R}_+,\mathcal{M}_{+}(\mathbb{R}^{2})),\\
&\nu\in L^{\infty}(\mathbb{R}_+,\mathcal{M}(\mathbb{R}^2,\mathbb{R}^{2\times 2})),
\end{align*}
and for all $\Phi\in C_c^{\infty}(\mathbb{R}_+\times\mathbb{R}^2)$
\begin{equation}\label{eq-weak-solution-mv-Euler-defect}
\left\{
\begin{split}
&\int_{\mathbb{R}^2}\Phi(t,x)\,(\rho(t,\mathrm{d}x)+\bar{\delta}(t,\mathrm{d}x))
-\int_{\mathbb{R}^2}\Phi(0,x)\,(\rho(0,\mathrm{d}x)+\bar{\delta}(0,\mathrm{d}x))\\
&=\int_0^t\int_{\mathbb{R}^2}\partial_{t}\Phi(s,x)\,(\rho(s,\mathrm{d}x)+\bar{\delta}(s,\mathrm{d}x))\,\mathrm{d}s+\int_0^t\mathcal{H}_{\Phi(s)}[\rho(s)+\bar{\delta}(s),\rho(s)+\bar{\delta}(s)]\,\mathrm{d}s\\
&\quad+\int_0^t\int_{\mathbb{R}^2}\nabla_x^2\Phi(s,x):\,\nu(s,\mathrm{d}x)\,\mathrm{d}s.
\end{split}\right.
\end{equation}
\end{Definition}

When there is no defect measure, assuming additional regularity on $\rho,\xi_{\alpha}$, the equation \eqref{eq-mv-E-defect} will reduce to the vortex-wave system, which is the extension of \cite[Thm.1.6]{Mio19}. The following proposition can be proved by selecting a special sequence of test functions and taking limits. We put the proof in the appendix.
\begin{Proposition}\label{prn-E0nu-to-E00}
Let $(\rho,\{\xi_{\alpha}\})$ be an accumulation point given by Theorem~\ref{thm-main} and such that $\nu$  vanishes. If moreover $\rho \in L^{\infty}_{\rm{loc}}(\mathbb{R}_{+},L^{p}(\mathbb{R}^{2}))$ for some $p>2$ and  $\{\xi_{\alpha}\}\subset C^{1}(\mathbb{R}_{+},\mathbb{R}^{2})$, then  $(\rho,\{\xi_{\alpha}\})$  satisfies the vortex-wave system \eqref{eq-Vortex-Wave} in the distributional sense.
\end{Proposition}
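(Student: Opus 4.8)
The plan is to insert well-chosen test functions into the weak formulation \eqref{eq-weak-solution-mv-Euler-defect} (with $\nu\equiv0$), to split the quadratic form $\mathcal H_\Phi[\rho+\bar\delta,\rho+\bar\delta]$ into its plasma--plasma, plasma--charge and charge--charge parts, and then to disaggregate the resulting single identity into the transport equation for $\rho$ together with the $N$ trajectory ODEs, using test functions concentrated around each curve $t\mapsto\xi_\alpha(t)$.

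By the symmetry $H_\Phi(x,y)=H_\Phi(y,x)$ and bilinearity, $\mathcal H_\Phi[\rho+\bar\delta,\rho+\bar\delta]=\mathcal H_\Phi[\rho,\rho]+2\mathcal H_\Phi[\rho,\bar\delta]+\mathcal H_\Phi[\bar\delta,\bar\delta]$. Since $\rho\in L^\infty_{\rm loc}(\mathbb R_+,L^p)$ with $p>2$, \eqref{eq-symmetric-form} gives $\mathcal H_\Phi[\rho,\rho]=\int_{\mathbb R^2}E_\rho^\perp\cdot\nabla\Phi\,\rho\,\mathrm{d}x$; writing $\bar\delta(t)=\sum_\alpha\delta_{\xi_\alpha(t)}$ and using $H_\Phi(x,x)=0$ together with the definitions of $F$ and $E_\rho$, one computes
\[
\mathcal H_\Phi[\bar\delta,\bar\delta]=\sum_\alpha F(t,\xi_\alpha)^\perp\cdot\nabla\Phi(\xi_\alpha),\qquad 2\mathcal H_\Phi[\rho,\bar\delta]=\int_{\mathbb R^2}F^\perp\cdot\nabla\Phi\,\rho\,\mathrm{d}x+\sum_\alpha E_\rho(t,\xi_\alpha)^\perp\cdot\nabla\Phi(\xi_\alpha),
\]
the appearing integrals being finite since $|x-\xi_\alpha|^{-1}\in L^{p'}_{\rm loc}$ ($p'<2$ the conjugate exponent) while $\rho\in L^p$, and the point-charge trajectories remain pairwise disjoint (a property of the accumulation point of Theorem~\ref{thm-main}). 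Since $\nu\equiv0$, a standard density argument extends \eqref{eq-weak-solution-mv-Euler-defect} to test functions that are $C^1$ in $t$ and $C^{1,1}$ in $x$ with $\nabla_x^2\Phi$ bounded.

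For the ODE of a fixed charge $\xi_\alpha$ I would test against
\[
\Phi_n(t,x)=\psi(t)\,\big(b(t)\cdot(x-\xi_\alpha(t))\big)\,\chi\!\big(n(x-\xi_\alpha(t))\big),\qquad \psi\in C_c^\infty((0,\infty)),\ b\in C_c^\infty((0,\infty),\mathbb R^2),
\]
with $\chi\in C_c^\infty(B(0,2))$, $\chi\equiv1$ on $B(0,1)$. The crucial feature is that, since $\Phi_n$ vanishes to first order at $x=\xi_\alpha(t)$, the norms $\|\Phi_n\|_\infty,\|\nabla_x\Phi_n\|_\infty,\|\partial_t\Phi_n\|_\infty$ stay bounded uniformly in $n$ while $\Phi_n(t,\cdot)$ and $\nabla_x\Phi_n(t,\cdot)$ are supported in $B_n(t):=\overline{B(\xi_\alpha(t),2/n)}$ (with $\|\nabla_x^2\Phi_n\|_\infty\lesssim n$), and that $\nabla_x\Phi_n(t,\xi_\alpha(t))=\psi(t)b(t)$ and $\partial_t\Phi_n(t,\xi_\alpha(t))=-\psi(t)\,b(t)\cdot\dot\xi_\alpha(t)$ do not depend on $n$. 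One then shows every $\rho$-term vanishes as $n\to\infty$: the quantities $\int\Phi_n\rho$, $\int\partial_t\Phi_n\,\rho$ and $\int F^\perp\cdot\nabla\Phi_n\,\rho$ are all controlled by $C\int_{B_n(t)}\big(1+|x-\xi_\alpha(t)|^{-1}\big)\rho\le C\|\rho(t)\|_p\big(|B_n|^{1/p'}+\||z|^{-1}\|_{L^{p'}(|z|<2/n)}\big)\lesssim n^{-2/p'}+n^{1-2/p'}\to0$ because $p'<2$, while $\mathcal H_{\Phi_n}[\rho,\rho]$ is estimated by splitting the double integral according to whether both points, or only one, lie in $B_n(t)$: in the first case $|H_{\Phi_n}|\lesssim\|\nabla^2\Phi_n\|_\infty\lesssim n$ and $(\int_{B_n}\rho)^2\lesssim n^{-4/p'}$, in the second $|H_{\Phi_n}(x,y)|\lesssim|x-y|^{-1}$ and $\int|x-y|^{-1}\rho(\mathrm{d}y)\lesssim\|\rho\|_p+\|\rho\|_1$, yielding a bound $\lesssim n^{1-4/p'}+n^{-2/p'}\to0$. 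All bounds being uniform in $t$ on $\mathrm{supp}\,\psi$, dominated convergence and the passage to the limit in \eqref{eq-weak-solution-mv-Euler-defect} leave $0=\int_0^\infty\psi(t)\,b(t)\cdot\big[(E_\rho(t,\xi_\alpha(t))+F(t,\xi_\alpha(t)))^\perp-\dot\xi_\alpha(t)\big]\,\mathrm{d}t$, hence, $\psi$ and $b$ being arbitrary, $\dot\xi_\alpha(t)=(E_\rho(t,\xi_\alpha(t))+F(t,\xi_\alpha(t)))^\perp$ for a.e.\ $t$. Reinserting these $N$ ODEs into \eqref{eq-weak-solution-mv-Euler-defect} for an arbitrary $\Phi\in C_c^\infty(\mathbb R_+\times\mathbb R^2)$ and using $\xi_\alpha\in C^1$ to write $\partial_t\Phi(t,\xi_\alpha)+\dot\xi_\alpha\cdot\nabla\Phi(t,\xi_\alpha)=\tfrac{\mathrm{d}}{\mathrm{d}t}\Phi(t,\xi_\alpha(t))$, all charge contributions cancel, leaving exactly the weak formulation of $\partial_t\rho+(E_\rho+F)^\perp\cdot\nabla\rho=0$; thus $(\rho,\{\xi_\alpha\})$ solves \eqref{eq-Vortex-Wave} in the distributional sense.

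The main obstacle is the limit $n\to\infty$ in the derivation of the ODEs: the field $F$ is singular precisely at $\xi_\alpha(t)$, the very point around which $\Phi_n$ concentrates, so a brute-force bound of $\int F^\perp\cdot\nabla\Phi_n\,\rho$ diverges. The remedy combines two ingredients: choosing $\Phi_n$ to vanish to first order at $\xi_\alpha$, which keeps $\nabla\Phi_n$ bounded rather than of size $n$, and exploiting $p>2$ (equivalently $|z|^{-1}\in L^{p'}_{\rm loc}$), which makes $\int_{B(\xi_\alpha,2/n)}|x-\xi_\alpha|^{-1}\rho\lesssim n^{1-2/p'}\to0$; a parallel care is needed in $\mathcal H_{\Phi_n}[\rho,\rho]$, where the loss $\|\nabla^2\Phi_n\|_\infty\lesssim n$ must be absorbed by the $L^p$-smallness of $\rho$ on the shrinking ball. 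The remaining points --- the density argument legitimizing $C^1$-in-time test functions once $\nu\equiv0$, and the measurability and local boundedness in $t$ of $E_\rho(t,\xi_\alpha(t))$ and $F(t,\xi_\alpha(t))$ --- are routine given $\rho\in L^\infty_{\rm loc}(L^p)$, $\xi_\alpha\in C^1$ and the separation of the charges.
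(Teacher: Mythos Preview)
Your argument is correct and takes a genuinely different route from the paper's. The paper proceeds in the opposite order: it first inserts test functions $\Phi_\epsilon(t,x)=\Phi(t,x)\prod_\alpha\psi_\epsilon(|x-\xi_\alpha(t)|)$ that \emph{vanish} near all the charges, so that the $\bar\delta$--contributions drop out immediately and one obtains the transport equation for $\rho$ after letting $\epsilon\to0$ (the error terms being controlled by $\epsilon^{-1}\|\rho\|_p\,\epsilon^{2-2/p}\to0$); it then uses $\Phi_\epsilon^\alpha$, vanishing near every charge except the $\alpha$-th, and subtracts the already-established $\rho$--equation to isolate the ODE for $\xi_\alpha$. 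You instead \emph{concentrate} the test function at a single charge via $\Phi_n(t,x)=\psi(t)\,(b(t)\cdot(x-\xi_\alpha(t)))\,\chi(n(x-\xi_\alpha(t)))$, kill all $\rho$--contributions in the limit $n\to\infty$, read off the ODE directly, and only afterwards recover the transport equation by substituting the ODEs back into the full identity. The paper's cut-offs keep the test functions globally smooth in $x$ and avoid any density argument, at the cost of making the extraction of the ODE slightly less explicit; your choice requires the $C^1$-in-time density step and a more delicate bookkeeping of the $\rho$--remainders (the $\|\nabla^2\Phi_n\|_\infty\lesssim n$ loss absorbed by $(\int_{B_n}\rho)^2\lesssim n^{-4/p'}$), but yields the trajectory equation in a single clean limit. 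Both approaches rely on exactly the same two structural inputs: the $L^p$ ($p>2$) integrability of $\rho$, giving $|z|^{-1}\in L^{p'}_{\rm loc}$, and the uniform separation $|\xi_\alpha-\xi_\beta|\ge M$ of the charges.
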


\subsection{Main result}

Our main result can now be stated as follows.
\begin{Theorem}\label{thm-main}
Let $N>1$ and  $T>0$. Let $f_{\varepsilon}^{0}\ge 0$ and $(f_{\varepsilon}^{0},\{\xi_{\alpha,\varepsilon}^{0},\eta_{\alpha,\varepsilon}^{0}\})$ satisfy the following assumptions: for each $0<\varepsilon<1$, there exist constants $R_{\varepsilon},\sigma_{\varepsilon}>0$ depending only on $\varepsilon$ such that
\begin{equation}\label{eq-initial-assumptions}
\left\{\begin{split}
&\operatorname{supp}f_{\varepsilon}^{0}\subset\{(x,v)\in B(0,R_{\varepsilon}):\min_{\alpha}\vert x-\xi_{\alpha,\varepsilon}^0\vert\ge \sigma_{\varepsilon}\},\\
&K_0:=\sup_{0<\varepsilon<1}\Big(\int_{\mathbb{R}^2}(1+\vert x\vert^2)f_{\varepsilon}^0\,\mathrm{d}x\,\mathrm{d}v+\sum_{\alpha}\vert\xi_{\alpha,\varepsilon}^0\vert\Big)<\infty,\\
&K_1:=\left\vert\sup_{0<\varepsilon<1}\mathcal{E}(f_{\varepsilon}^0,\{\xi_{\alpha,\varepsilon}^0,\eta_{\alpha,\varepsilon}^0\})\right\vert<\infty,\quad\lim_{\varepsilon\to0}\varepsilon^2\| f_{\varepsilon}^0\|_{\infty}=0.
\end{split}\right.
\end{equation}
Let $(f_{\varepsilon},\{\xi_{\alpha,\varepsilon}\})$  denote the corresponding global weak solution of \eqref{eq-VP-point}.

Then there exists a subsequence  $\varepsilon_{n}\rightarrow 0$ as  $n\rightarrow+\infty$ such  that
\begin{itemize}
\item $\rho_{\varepsilon_{n}}\to\rho$ weak-* in $L^{\infty}(\mathbb{R}_{+},\mathcal{M}_+(\mathbb{R}^2))$, $\xi_{\alpha,\varepsilon_n}\to\xi_{\alpha}$ in $C^{r}([0,T],\mathbb{R}^2)$, $\forall r\in[0,1/4)$.

\item $\rho\in L^{\infty}(\mathbb{R}_{+},H^{-1}(\mathbb{R}^2))\cap C^{1/2}([0,T],W^{-2,1}(\mathbb{R}^2))$, $\xi_{\alpha}\in C^{1/2}([0,T],\mathbb{R}^2)$.

\item There exists $M>0$ depending only on $K_0,K_1$ such that $\min_{t\ge 0,\alpha\ne\beta}\vert\xi_{\alpha}(t)-\xi_{\beta}(t)\vert\ge M$.

\item There  exists  $\mu_{0}=\mu_{0}(t,x,\omega)\in L^{\infty}(\mathbb{R}_{+},\mathcal{M}_{+}(\mathbb{R}^2\times\mathbb{S}^1))$  and  $\{\mathbf{M}^{\alpha}\}\subset L^{\infty}(\mathbb{R}_{+},\mathbb{R}^{2\times 2})$ such that $(\rho,\sum_{\alpha}\delta_{\xi_{\alpha}})$ is a weak solution to \eqref{eq-mv-E-defect} with defect measure $\nu$ defined by
\begin{equation}\label{eq-defect-measure}
\nu(t,x):=\int_{\mathbb{S}^1}\omega^{\perp}\otimes \omega \,\mu_{0}(t,x,\mathrm{d}\omega)+\sum_{\alpha} \mathbf{M}^{\alpha}(t)\delta_{\xi_{\alpha}(t)}.
\end{equation}
\end{itemize}
\end{Theorem}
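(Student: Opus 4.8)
The plan is to follow the classical scheme of the guiding-center approximation (Golse--Saint-Raymond, and \cite{Mio19} for a single charge): extract uniform bounds from the conservation of $\mathcal E$ and $\mathcal I$, recast the plasma equation in a form stable under the limit, prove compactness, and pass to the limit by a Delort--Schochet-type argument; the new work is the control of the singular field along the point charges and the time-regularity of their trajectories (Sections~\ref{sec-est-point-singular-field}--\ref{sec-time-regularity}). First I would establish, uniformly in $t\in[0,T]$ and in small $\varepsilon$: $\|f_\varepsilon(t)\|_1+\int(1+|x|^2+|v|^2)f_\varepsilon(t)\,\mathrm{d}x\,\mathrm{d}v\le C$, $\sum_\alpha(|\xi_{\alpha,\varepsilon}(t)|+|\eta_{\alpha,\varepsilon}(t)|)\le C$, hence $\|\rho_\varepsilon(t)\|_1+\|E_\varepsilon(t)\|_2\le C$ and $\rho_\varepsilon$ bounded in $L^\infty_t H^{-1}$; these follow from $\mathcal E$, $\mathcal I$, \eqref{eq-initial-assumptions} and the standard logarithmic-potential inequalities, the coupling between the moment and kinetic-energy bounds being broken by the $\varepsilon$-weight in $\mathcal I$. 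Only $\|f_\varepsilon\|_\infty=\|f_\varepsilon^0\|_\infty$ fails to be uniform, but $\varepsilon^2\|f_\varepsilon^0\|_\infty\to0$ is enough. Since the charge--charge term of $\mathcal E$ is the only one diverging to $+\infty$ under a collision while the others are bounded below by the above, one gets the uniform non-collision bound $\min_{\alpha\ne\beta}|\xi_{\alpha,\varepsilon}(t)-\xi_{\beta,\varepsilon}(t)|\ge c>0$ --- here $N>1$ genuinely enters, and the argument needs no smallness of the plasma mass, which is how the condition of \cite{Mio19} is removed. Finally, the Caprino--Marchioro pointwise-energy argument in the refined form of \cite{WZ23}, whose functional is built so that the principal singular terms (now including the charge--charge ones) cancel in its time-derivative, keeps $\operatorname{supp}\rho_\varepsilon(t)$ at positive distance from each $\xi_{\alpha,\varepsilon}(t)$ and yields the key estimate $\int_0^T|E_\varepsilon(t,\xi_{\alpha,\varepsilon}(t))|\,\mathrm{d}t\le C$.

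\textbf{Reformulation and cancellation of singular terms.} Multiplying the Vlasov equation by $v$, integrating in $v$ and iterating gives $j_\varepsilon=\varepsilon E_{\rho_\varepsilon+\bar\delta_\varepsilon}^\perp\rho_\varepsilon-\varepsilon(\nabla_x\!\cdot\mathbb P_\varepsilon)^\perp-\varepsilon^2\partial_t j_\varepsilon^\perp$ with $\mathbb P_\varepsilon=\int v\otimes v\,f_\varepsilon\,\mathrm{d}v$; inserting this into $\partial_t\rho_\varepsilon+\varepsilon^{-1}\nabla_x\!\cdot j_\varepsilon=0$ and using $F_\varepsilon=E_{\bar\delta_\varepsilon}$,
\[
\partial_t\rho_\varepsilon+\nabla_x\!\cdot\big(E_{\rho_\varepsilon+\bar\delta_\varepsilon}^\perp\rho_\varepsilon\big)=\nabla_x^2:\mathbb M_\varepsilon+\varepsilon R_\varepsilon,\qquad \mathbb M_\varepsilon:=\tfrac12\!\int(v^\perp\!\otimes v+v\otimes v^\perp)f_\varepsilon\,\mathrm{d}v,
\]
with $R_\varepsilon$ controlled in $\mathcal D'$ by $\|j_\varepsilon\|_{L^\infty_t L^1_x}\le C$. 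For the charges the guiding centers $X_{\alpha,\varepsilon}:=\xi_{\alpha,\varepsilon}+\varepsilon\eta_{\alpha,\varepsilon}^\perp$ satisfy $\dot X_{\alpha,\varepsilon}=(E_\varepsilon(\xi_{\alpha,\varepsilon})+F_\varepsilon(\xi_{\alpha,\varepsilon}))^\perp$ and $|X_{\alpha,\varepsilon}-\xi_{\alpha,\varepsilon}|\le C\varepsilon$. Testing the equation for $\rho_\varepsilon$ and that for $\tilde\delta_\varepsilon:=\sum_\alpha\delta_{X_{\alpha,\varepsilon}}$ against $\Phi\in C_c^\infty(\mathbb R_+\times\mathbb R^2)$, writing the field terms through the renormalized kernel $H_\Phi$ (which removes self-interactions, $H_\Phi(x,x)=0$) and trading $X_{\alpha,\varepsilon}$ for $\xi_{\alpha,\varepsilon}$ up to $O(\varepsilon)$-errors --- here the bound on $\int_0^T|E_\varepsilon(\cdot,\xi_{\alpha,\varepsilon})|$ is used --- the plasma--charge cross terms $\sum_\alpha E_\varepsilon(\xi_{\alpha,\varepsilon})^\perp\!\cdot\nabla\Phi(\xi_{\alpha,\varepsilon})$ occurring in both equations cancel exactly, leaving
\[
\langle\partial_t(\rho_\varepsilon+\tilde\delta_\varepsilon),\Phi\rangle=\mathcal H_\Phi[\rho_\varepsilon+\bar\delta_\varepsilon,\rho_\varepsilon+\bar\delta_\varepsilon]+\int\nabla^2\Phi:\mathbb M_\varepsilon+O(\varepsilon).
\]

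\textbf{Compactness (the main obstacle).} From the uniform bounds, $\rho_\varepsilon$ is bounded in $L^\infty_t(\mathcal M_+\cap H^{-1})$, $\mathbb M_\varepsilon$ in $L^\infty_t\mathcal M(\mathbb R^2,\mathbb R^{2\times2})$, and the angular measures $\mu_\varepsilon$ on $\mathbb R^2\times\mathbb S^1$ obtained by pushing $|v|^2 f_\varepsilon\,\mathrm{d}x\,\mathrm{d}v$ forward by $(x,v)\mapsto(x,v/|v|)$ are bounded in $L^\infty_t\mathcal M_+$ with $\mathbb M_\varepsilon=\tfrac12\int_{\mathbb S^1}(\omega^\perp\!\otimes\omega+\omega\otimes\omega^\perp)\,\mu_\varepsilon(\cdot,\mathrm{d}\omega)$; Banach--Alaoglu gives a subsequence with $\rho_{\varepsilon_n}\stackrel{*}{\rightharpoonup}\rho$ and $\mu_{\varepsilon_n}\stackrel{*}{\rightharpoonup}\mu_0\ge0$. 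The hard part is the equicontinuity in time of the trajectories: the $L^1_t$ bound on $E_\varepsilon(\cdot,\xi_{\alpha,\varepsilon})$ only gives uniformly bounded variation of $X_{\alpha,\varepsilon}$, not a modulus of continuity. To upgrade it I would run the iterative scheme of Section~\ref{sec-time-regularity}: partition $[0,T]$ with an $\varepsilon$-dependent step chosen adaptively (shorter wherever $\xi_{\alpha,\varepsilon}$ gets close to another charge or to $\operatorname{supp}\rho_\varepsilon$, where the field is large), bound the displacement on each step from the local control of $E_\varepsilon$, and sum to obtain a uniform Hölder-$\tfrac14$ bound on $\{\xi_{\alpha,\varepsilon}\}$; Arzelà--Ascoli then yields $\xi_{\alpha,\varepsilon_n}\to\xi_\alpha$ in $C^r([0,T])$ for every $r<\tfrac14$ and $X_{\alpha,\varepsilon_n}\to\xi_\alpha$ uniformly, while a bootstrap in the limiting charge equation promotes $\xi_\alpha$ to $C^{1/2}$. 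This step, fed by the pointwise-field estimate, carries the bulk of the analysis and is where I expect the main difficulty.

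\textbf{Passing to the limit.} Finally I would let $\varepsilon_n\to0$ in the $t$-integrated form of the last identity against $\Phi\in C_c^\infty(\mathbb R_+\times\mathbb R^2)$. The left side and the $\partial_t\Phi$ term converge via an Arzelà--Ascoli argument in $C_t W^{-2,1}_{\mathrm{loc}}$, which gives pointwise-in-$t$ weak convergence of $\rho_{\varepsilon_n}+\tilde\delta_{\varepsilon_n}$ to $\rho+\bar\delta$, $\bar\delta=\sum_\alpha\delta_{\xi_\alpha}$, and $\rho+\bar\delta\in C^{1/2}_t W^{-2,1}$ from the uniform time-derivative bound. In $\mathcal H_\Phi[\rho_{\varepsilon_n}+\bar\delta_{\varepsilon_n},\cdot]$: the finite charge--charge part tends to $\mathcal H_\Phi[\bar\delta,\bar\delta]$ by uniform convergence of the $\xi_{\alpha,\varepsilon_n}$ and non-collision; the plasma--plasma part tends to $\mathcal H_\Phi[\rho,\rho]$ by the Delort--Schochet argument (\cite{Del91,Sch95}) --- exactly where the single sign of $\rho_\varepsilon\ge0$ and its $H^{-1}$ bound are used, ruling out extra concentration away from the charges; and the cross part $2\mathcal H_\Phi[\rho_{\varepsilon_n},\bar\delta_{\varepsilon_n}]=\sum_\alpha\int H_\Phi(x,\xi_{\alpha,\varepsilon_n})\,\rho_{\varepsilon_n}(\mathrm{d}x)$ is split at scale $\delta$: the part over $\{|x-\xi_{\alpha,\varepsilon_n}|>\delta\}$ converges (as $\varepsilon_n\to0$, then $\delta\to0$) to $2\mathcal H_\Phi[\rho,\bar\delta]$, while on $\{|x-\xi_{\alpha,\varepsilon_n}|\le\delta\}$ one has $H_\Phi(x,\xi_{\alpha,\varepsilon_n})=\nabla^2\Phi(\xi_{\alpha,\varepsilon_n}):\frac{(x-\xi_{\alpha,\varepsilon_n})^\perp\!\otimes(x-\xi_{\alpha,\varepsilon_n})}{|x-\xi_{\alpha,\varepsilon_n}|^2}+O(\delta)$, so a diagonal extraction in $(n,\delta)$ of the uniformly bounded matrices $\int_{|x-\xi_{\alpha,\varepsilon_n}|\le\delta}\frac{(x-\xi_{\alpha,\varepsilon_n})^\perp\!\otimes(x-\xi_{\alpha,\varepsilon_n})}{|x-\xi_{\alpha,\varepsilon_n}|^2}\rho_{\varepsilon_n}(\mathrm{d}x)$ produces $\mathbf M^\alpha\in L^\infty_t(\mathbb R^{2\times2})$ recording the concentration of $\rho_{\varepsilon_n}$ at the charges. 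Since in addition $\int\nabla^2\Phi:\mathbb M_{\varepsilon_n}\to\int\nabla^2\Phi:\int_{\mathbb S^1}\omega^\perp\!\otimes\omega\,\mu_0$ (symmetrization being irrelevant under $\nabla^2$), collecting all terms shows that $(\rho,\sum_\alpha\delta_{\xi_\alpha})$ solves \eqref{eq-weak-solution-mv-Euler-defect} with $\nu$ as in \eqref{eq-defect-measure}; finally $\rho\in L^\infty_t H^{-1}$ and $\mu_0\ge0$ pass to the limit by lower semicontinuity, completing the proof.
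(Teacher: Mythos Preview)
Your overall strategy is right, but there is a genuine gap in the reformulation and in your identification of the Dirac part of the defect measure. The claim $\int_0^T|E_\varepsilon(t,\xi_{\alpha,\varepsilon}(t))|\,\mathrm{d}t\le C$ is too strong: the pointwise-energy argument yields only $L_{0,\varepsilon}(T)\le C\varepsilon^{-1}$ (Corollary~\ref{coro-key-L0}), there is no uniform separation between $\operatorname{supp}\rho_\varepsilon$ and the charges, and $E_\varepsilon$ is not uniformly in $L^2$ (only its zero-mean part $\widetilde E_\varepsilon$ is, cf.\ the remark after Proposition~\ref{prn-H-1-bound-density}). Consequently the replacement ``$X_{\alpha,\varepsilon}\to\xi_{\alpha,\varepsilon}$ up to $O(\varepsilon)$'' fails: expanding $\nabla\Phi(X_{\alpha,\varepsilon})\cdot\dot X_{\alpha,\varepsilon}$ about $\xi_{\alpha,\varepsilon}$ and writing $\dot X_{\alpha,\varepsilon}=\varepsilon^{-1}\eta_{\alpha,\varepsilon}+\varepsilon\dot\eta_{\alpha,\varepsilon}^\perp$ produces an $O(1)$ contribution $\nabla^2\Phi(\xi_{\alpha,\varepsilon}):\eta_{\alpha,\varepsilon}^\perp\otimes\eta_{\alpha,\varepsilon}$ that must be kept (this is exactly the extra term in Proposition~\ref{prn-weak-form}). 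The matrices $\mathbf M^\alpha$ are then the weak-$*$ limits in $L^\infty_t$ of $\eta_{\alpha,\varepsilon_n}^\perp\otimes\eta_{\alpha,\varepsilon_n}$, not a concentration residue of $\rho_{\varepsilon_n}$ at the charges: the non-concentration estimate \eqref{eq-estimate-non-concentration} forces $\int_{B(\xi_{\alpha,\varepsilon_n},\delta)}\rho_{\varepsilon_n}\le C|\ln\delta|^{-1/2}\to0$ as $\delta\to0$ uniformly in $n$, so your diagonal-extraction matrices vanish and the cross term $\mathcal H_\Phi[\rho_{\varepsilon_n},\bar\delta_{\varepsilon_n}]$ in fact passes to the limit with no defect (Lemma~\ref{lem-convergence-quadratic-form}).

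Your sketch of the time-equicontinuity of $\xi_{\alpha,\varepsilon}$ also misses the actual mechanism. The paper does not track the field along the trajectories with an adaptive step; it first tests the weak formulation of Proposition~\ref{prn-weak-form} against a cutoff localized near $\xi_{\alpha,\varepsilon}(t)$ to get the ``wrong-index'' bound $\max_\alpha\min_\beta|\xi_{\alpha,\varepsilon}(t)-\xi_{\beta,\varepsilon}(s)|\le\widetilde K_0(t-s)^{1/4}$ (Lemma~\ref{lem-min-continuity-point-charge}), then uses the uniform non-collision bound \eqref{eq-priori-positive-distance} to force $\beta=\alpha$, bootstrapping over time-steps $\widetilde K_0^{-4}\varepsilon^{2^{-k}}$, $k\to\infty$, to reach a uniform window $t-s\le\widetilde K_0^{-4}$ (Lemma~\ref{lem-small-time-continuity-point-charge}). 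The $C^{1/2}$ regularity of $\xi_\alpha$ then comes directly from the companion estimate $|\xi_{\alpha,\varepsilon}(t)-\xi_{\alpha,\varepsilon}(s)|\le\widetilde K\sqrt{t-s+\varepsilon}$, not from any bootstrap in the limit equation.
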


\begin{Remark}
Note that we do not specify the convergence of $f_{\varepsilon}^{0}$, hence the initial condition of the limit equation is defined by the time continuity in distributional sense of the limit points.
\end{Remark}

\begin{Remark}
Very recently, under the non-neutral clusters hypothesis,  Donati and Godard-Cadillac \cite{DG23} prove that $\frac{1}{\zeta+1}$-H\"older regularity for $\zeta$-point-vortex dynamics in the plane is optimal, where $\zeta$ is the index related to the kernel of the Biot-Savart law. In our case $\zeta=1$, it seems that $\frac{1}{2}$-H\"older continuity of the limit points $\xi_{\alpha}$ is sharp. However, if $\rho\equiv 0$, it is proved in the next section that the limit points $\xi_{\alpha}\in C^{0,1}([0,T],\mathbb{R}^2)$, then $\xi_{\alpha}\in C^{\infty}([0,T],\mathbb{R}^2)$ automatically, due to the repulsive interaction between the point charges. When $\rho\ne 0$, it is possible that the plasma particles collide with the point charges as $\varepsilon\to 0$. Hence it might be interesting to study whether $\frac{1}{2}$-H\"older regularity of the limit points $\xi_{\alpha}$ is optimal.
\end{Remark}

\begin{Remark}
Another question is the extension of Saint-Raymond's result \cite{Sai02} to the plasma-charge model. However, as explained in \cite{Mio19}, the absence of the bound for $\iint_{\vert v\vert\le C\varepsilon^{-\gamma}}f_{\varepsilon}\vert v\vert^2\vert x-\xi_{\alpha,\varepsilon}\vert^{-1}\,\mathrm{d}x\,\mathrm{d}v$ with certain $\gamma>0$ makes Saint-Raymond's method not applicable in this paper.
\end{Remark}

\section{The point charge model}\label{sec-2}

Before proving the main result, we consider a simple case that $f_{\varepsilon}\equiv 0$ in \eqref{eq-VP-point}. In this case the model reduces to the point charge model written as follows, which has a concise analysis of the asymptotic behavior as $\varepsilon\to 0$:
\begin{equation}\label{eq-pointCharge}
\left\{
\begin{split}
&\dot{\xi}_{\alpha,\varepsilon}(t)=\frac{\eta_{\alpha,\varepsilon}(t)}{\varepsilon},\quad\dot{\eta}_{\alpha,\varepsilon}(t)=\frac{\eta_{\alpha,\varepsilon}^{\perp}(t)}{\varepsilon^2}+\frac{1}{\varepsilon}F_{\varepsilon}(t,\xi_{\alpha,\varepsilon}(t)),\\
&(\xi_{\alpha,\varepsilon},\eta_{\alpha,\varepsilon})\vert_{t=0}=(\xi_{\alpha,\varepsilon}^0,\eta_{\alpha,\varepsilon}^0),\quad\alpha=1,2,\dots,N.
\end{split}\right.
\end{equation}
The point charge model is a repulsive $N$-body system, which has its own research interest. 

The analysis of \eqref{eq-pointCharge} relies heavily on the quantities $\mathcal{E}_{\varepsilon},\mathcal{I}_{\rm ine}^{\varepsilon}$, called energy and moment of inertia respectively of the system, which are defined as
\begin{align*}
\mathcal{E}_{\varepsilon}(t)&=\frac{1}{2}\sum_{\alpha}\vert\eta_{\alpha,\varepsilon}(t)\vert^2-\frac{1}{2}\sum_{\alpha\ne\beta}\ln\vert \xi_{\alpha,\varepsilon}(t)-\xi_{\beta,\varepsilon}(t)\vert,\\
\mathcal{I}_{\rm ine}^{\varepsilon}(t)&=\sum_{\alpha}\vert \xi_{\alpha,\varepsilon}(t)\vert^2.
\end{align*}

We assume that the initial energy and the initial moment of inertia are uniformly bounded by a constant $K_2>0$:
\begin{equation}\label{eq-energy-moment-of-inertia-assume-point-charge-model}
\vert\sup_{0<\varepsilon<1}\mathcal{E}_{\varepsilon}(0)\vert\le K_2,\quad \sup_{0<\varepsilon<1}\mathcal{I}_{\rm ine}^{\varepsilon}(0)\le K_2.
\end{equation}

We sketch the properties of these quantities as follows. Note the energy is conservative, which means that
\begin{align*}
\vert\mathcal{E}_{\varepsilon}(t)\vert\equiv\vert\mathcal{E}_{\varepsilon}(0)\vert<K_2.
\end{align*}
The moment of inertia satisfies
\begin{align*}
\mathcal{I}_{\rm ine}^{\varepsilon}(t)&\le 4(K_2+N^4)e^{2t},
\end{align*}
by the calculation below and the Gr\"onwall's inequality
\begin{align*}
\left\vert\frac{\mathrm{d}}{\mathrm{d}t}\mathcal{I}_{\rm ine}^{\varepsilon}(t)\right\vert&=\vert\sum_{\alpha}2\xi_{\alpha,\varepsilon}(t)\cdot\eta_{\alpha,\varepsilon}(t)\vert\\
&\le\sum_{\alpha}\vert \xi_{\alpha,\varepsilon}(t)\vert^2+\sum_{\alpha}\vert \eta_{\alpha,\varepsilon}(t)\vert^2\\
&\le\mathcal{I}_{\rm ine}^{\varepsilon}(t)+2\mathcal{E}_{\varepsilon}(0)+\sum_{\alpha\ne\beta}\ln_+\vert \xi_{\alpha,\varepsilon}(t)-\xi_{\beta,\varepsilon}(t)\vert\\
&\le2\mathcal{I}_{\rm ine}^{\varepsilon}(t)+2K_2+N^4.
\end{align*}
The above properties can be used to obtain the upper bound of the velocities and the lower bound of the distance between the point charges. Indeed, combining with the inequality $\ln_+r\le r$ for all $r>0$, we have
\begin{align*}
&\frac{1}{2}\sum_{\alpha}\vert\eta_{\alpha,\varepsilon}(t)\vert^2+\frac{1}{2}\sum_{\alpha\ne\beta}\ln_-\vert \xi_{\alpha,\varepsilon}(t)-\xi_{\beta,\varepsilon}(t)\vert\\
&\le\mathcal{E}_{\varepsilon}(t)+\frac{1}{2}\sum_{\alpha\ne\beta}\ln_+\vert \xi_{\alpha,\varepsilon}(t)-\xi_{\beta,\varepsilon}(t)\vert,\\
&\le\mathcal{E}_{\varepsilon}(0)+N\sum_{\alpha}\vert\xi_{\alpha,\varepsilon}(t)\vert,\\
&\le K_2+2N^2e^{t}\sqrt{K_2+N^4},
\end{align*}
which implies
\begin{align*}
&\sum_{\alpha}\vert\eta_{\alpha,\varepsilon}(t)\vert^2\le 2K_2+4N^2e^{t}\sqrt{K_2+N^4},\\
&\min_{\alpha\ne\beta}\vert \xi_{\alpha,\varepsilon}(t)-\xi_{\beta,\varepsilon}(t)\vert\ge\exp\left(-2K_2-4N^2e^{t}\sqrt{K_2+N^4}\right).
\end{align*}
Hence for arbitrary $T>0$, $\forall t\in[0,T]$, we have constants $M_1,M_2>0$ depending only on $K_2,N,T$ such that
\begin{equation}\label{eq-distance-estimate-point-charge-model}
\sum_{\alpha}\vert\eta_{\alpha,\varepsilon}(t)\vert^2\le M_1,\quad\min_{\alpha\ne\beta}\vert \xi_{\alpha,\varepsilon}(t)-\xi_{\beta,\varepsilon}(t)\vert\ge M_2.
\end{equation}

For each $\varepsilon>0$ and $\xi_{\alpha,\varepsilon}^0\ne\xi_{\beta,\varepsilon}^0$ if $\alpha\ne\beta$, the global existence and uniqueness of classical solutions to the model \eqref{eq-pointCharge} is directly obtained by Cauchy-Lipschitz theorem and the bounds \eqref{eq-distance-estimate-point-charge-model}. Moreover, the $C^1$ solutions of the model \eqref{eq-pointCharge} are automatically $C^{\infty}$. Now we give the analogous result of Theorem~\ref{thm-main} for the model \eqref{eq-pointCharge}
\begin{Theorem}\label{thm-sec-2-main}
Let $t\in[0,T]$. Assume that $\{\xi_{\alpha,\varepsilon}^0,\eta_{\alpha,\varepsilon}^0\}$ satisfies \eqref{eq-energy-moment-of-inertia-assume-point-charge-model}. Assume moreover, there exists a set $\{\xi_{\alpha}^0\}$ such that $\xi_{\alpha}^0\ne\xi_{\beta}^0$ if $\alpha\ne\beta$ and
\begin{equation}
\lim_{\varepsilon\to 0}\sum_{\alpha}\vert\xi_{\alpha,\varepsilon}^0-\xi_{\alpha}^0\vert=0.
\end{equation}
Then the classical solutions $\{\xi_{\alpha,\varepsilon}\}$ of the model \eqref{eq-pointCharge} converge to $\{\xi_{\alpha}\}$ in $C([0,T],\mathbb{R}^2)$ as $\varepsilon\to 0$, where $\{\xi_{\alpha}\}$ is the $C^{0,1}([0,T],\mathbb{R}^2)$ and automatically $C^{\infty}([0,T],\mathbb{R}^2)$ solution of the following point vortex model
\begin{equation}
\dot{\xi_{\alpha}}(t)=F^{\perp}(t,\xi_{\alpha}(t)),\quad \xi_{\alpha}(0)=\xi_{\alpha}^0,\quad\alpha=1,2,\dots,N.
\end{equation}
Moreover, we have the following quantitative estimate:
\begin{equation}\label{eq-sec-2-quantitative-estimate}
\sum_{\alpha}\vert\xi_{\alpha,\varepsilon}(t)-\xi_{\alpha}(t)\vert\le C\left[\sum_{\alpha}\vert\xi_{\alpha,\varepsilon}^0-\xi_{\alpha}^0\vert+\varepsilon\right],
\end{equation}
where $C$ is a constant depending only on $K_2,N,T$.
\end{Theorem}

\begin{proof}
Note
\begin{equation}\label{eq-sec-2-equation-1}
\xi_{\alpha,\varepsilon}(t)+\varepsilon\eta_{\alpha,\varepsilon}^{\perp}(t)=\xi_{\alpha,\varepsilon}^0+\varepsilon(\eta_{\alpha,\varepsilon}^0)^{\perp}+\int_0^tF_{\varepsilon}^{\perp}(s,\xi_{\alpha,\varepsilon}(s))\,\mathrm{d}s.
\end{equation}
For any $\varepsilon_1,\varepsilon_2>0$, we have
\begin{align}
\vert\xi_{\alpha,\varepsilon_1}(t)-\xi_{\alpha,\varepsilon_2}(t)\vert&
\le\vert\xi_{\alpha,\varepsilon_1}^0-\xi_{\alpha,\varepsilon_2}^0\vert+2M_2(\varepsilon_1+\varepsilon_2)\nonumber\\
&\quad+\int_0^t\vert F_{\varepsilon_1}(s,\xi_{\alpha,\varepsilon_1}(s))-F_{\varepsilon_2}(s,\xi_{\alpha,\varepsilon_2}(s))\vert\,\mathrm{d}s\label{eq-sec-2-2.6}
\end{align}
Due to the lower bound in \eqref{eq-distance-estimate-point-charge-model}, there exists a constant $M_3>0$ depending only on $K_2,N,T$ such that
\begin{align*}
\vert F_{\varepsilon_1}(s,\xi_{\alpha,\varepsilon_1}(s))-F_{\varepsilon_2}(s,\xi_{\alpha,\varepsilon_2}(s))\vert\le M_3\sum_{\alpha}\vert\xi_{\alpha,\varepsilon_1}(s)-\xi_{\alpha,\varepsilon_2}(s)\vert.
\end{align*}
Combining with \eqref{eq-sec-2-2.6}, we have
\begin{align*}
\sum_{\alpha}\vert\xi_{\alpha,\varepsilon_1}(t)-\xi_{\alpha,\varepsilon_2}(t)\vert&
\le\sum_{\alpha}\vert\xi_{\alpha,\varepsilon_1}^0-\xi_{\alpha,\varepsilon_2}^0\vert+2NM_2(\varepsilon_1+\varepsilon_2)\\
&\quad+NM_3\int_0^t\sum_{\alpha}\vert\xi_{\alpha,\varepsilon_1}(s)-\xi_{\alpha,\varepsilon_2}(s)\vert\,\mathrm{d}s.
\end{align*}
By the Gr\"onwall's inequality, we have
\begin{align}
\sum_{\alpha}\vert\xi_{\alpha,\varepsilon_1}(t)-\xi_{\alpha,\varepsilon_2}(t)\vert&
\le\left[\sum_{\alpha}\vert\xi_{\alpha,\varepsilon_1}^0-\xi_{\alpha,\varepsilon_2}^0\vert+2NM_2(\varepsilon_1+\varepsilon_2)\right](1+NM_3te^{NM_3t})\nonumber\\
&\le C\left[\sum_{\alpha}\vert\xi_{\alpha,\varepsilon_1}^0-\xi_{\alpha,\varepsilon_2}^0\vert+\varepsilon_1+\varepsilon_2\right],\label{eq-sec-2-quantitative-estimate-2}
\end{align}
where $C$ is a constant depending only on $K_2,N,T$. Hence $\xi_{\alpha,\varepsilon}(\cdot)$ is a Cauchy sequence in $C([0,T],\mathbb{R}^2)$ as $\varepsilon\to 0$. We denote its limit point as $\xi_{\alpha}(\cdot)$. The estimate \eqref{eq-sec-2-quantitative-estimate} can be deduced by \eqref{eq-sec-2-quantitative-estimate-2}. Moreover, there holds
\begin{equation*}
\min_{\alpha\ne\beta}\vert \xi_{\alpha}(t)-\xi_{\beta}(t)\vert\ge M_2.
\end{equation*}
We can deduce that the equation \eqref{eq-sec-2-equation-1} converges to
\begin{equation*}
\xi_{\alpha}(t)=\xi_{\alpha}^0+\int_0^tF^{\perp}(s,\xi_{\alpha}(s))\,\mathrm{d}s,
\end{equation*}
which implies $\xi_{\alpha}$ is a $C^{0,1}$ solution of the point vortex model. Hence $\xi_{\alpha}$ is a.e.~differentiable and
\begin{equation*}
\dot{\xi_{\alpha}}(t)=F^{\perp}(t,\xi_{\alpha}(t)).
\end{equation*}
However, the R.H.S.~in the above equality belongs to $C([0,T],\mathbb{R}^2)$. Hence $\xi_{\alpha}$ is actually  $C^{1}([0,T],\mathbb{R}^2)$. Then it implies that the R.H.S.~in the above equality belongs to $C^1([0,T],\mathbb{R}^2)$. This once again shows that $\xi_{\alpha}\in C^{2}([0,T],\mathbb{R}^2)$. By induction, we finally obtain that $\xi_{\alpha}\in C^{\infty}([0,T],\mathbb{R}^2)$. 
\end{proof}

\begin{Remark}\label{rem-sec-2-rem-2-1}
The existence of $\{\xi_{\alpha}\}$ in Theorem~\ref{thm-sec-2-main} can be proved also by the compactness argument. Indeed, assume $0\le t_1<t_2\le T$, by \eqref{eq-sec-2-equation-1}, we have
\begin{equation*}
\vert\xi_{\alpha,\varepsilon}(t_2)-\xi_{\alpha,\varepsilon}(t_1)\vert\le\varepsilon\vert\eta_{\alpha,\varepsilon}(t_1)-\eta_{\alpha,\varepsilon}(t_2)\vert+\int_{t_1}^{t_2}\vert F_{\varepsilon}(s,\xi_{\alpha,\varepsilon}(s))\vert\,\mathrm{d}s.
\end{equation*}
By the uniform bounds \eqref{eq-distance-estimate-point-charge-model} and the equations in \eqref{eq-pointCharge}, we have
\begin{equation*}
\vert\eta_{\alpha,\varepsilon}(t_1)-\eta_{\alpha,\varepsilon}(t_2)\vert\le C\varepsilon^{-2}(t_2-t_1).
\end{equation*}
By considering the cases $\varepsilon>\sqrt{t_2-t_1}$ and $\varepsilon\le\sqrt{t_2-t_1}$, we have
\begin{equation*}
\varepsilon\vert\eta_{\alpha,\varepsilon}(t_1)-\eta_{\alpha,\varepsilon}(t_2)\vert\le C\sqrt{t_2-t_1}.
\end{equation*}
Hence
\begin{equation*}
\vert\xi_{\alpha,\varepsilon}(t_2)-\xi_{\alpha,\varepsilon}(t_1)\vert\le C\sqrt{t_2-t_1},
\end{equation*}
where $C$ is a constant depending only on $K_2,N,T$. We obtain that $\xi_{\alpha,\varepsilon}(\cdot)$ is uniformly bounded in $C^{1/2}([0,T],\mathbb{R}^2)$. The Arzel\`a-Ascoli theorem is available.
\end{Remark}

\begin{Remark}\label{rem-sec-2-rem-2-2}
When $f_{\varepsilon}\ne 0$, the R.H.S.~of \eqref{eq-sec-2-equation-1} has an extra term $\int_0^tE_{\varepsilon}^{\perp}(s,\xi_{\alpha,\varepsilon}(s))\,\mathrm{d}s$, which is the key term to be dealt with in Sec.~\ref{sec-est-point-singular-field}. In this case, the compactness argument in Remark~\ref{rem-sec-2-rem-2-1} is not obvious, since we only obtain the estimate on the extra term like $\int_0^t\vert E_{\varepsilon}^{\perp}(s,\xi_{\alpha,\varepsilon}(s))\vert\,\mathrm{d}s\lesssim\varepsilon^{-1}t^{1/2}$. In \cite{Mio19}, Miot utilizes the benefit of the symmetric quadratic form given in Definition~\ref{def-symmetric-quadratic-form} and a contradiction argument to obtain that
\begin{equation*}
\vert\xi_{\alpha,\varepsilon}(t_2)-\xi_{\alpha,\varepsilon}(t_1)\vert\lesssim \sqrt{t_2-t_1}+\varepsilon^{1/3}.
\end{equation*}
Then a discontinuous version of Arzel\`a-Ascoli theorem is available to obtain the compactness of $\xi_{\alpha,\varepsilon}$ in $C([0,T],\mathbb{R}^2)$.

Roughly speaking, the contradiction argument is based on the construction of a test function $\Phi\in C_c^{\infty}(\mathbb{R}^2)$ such that $\Phi(\xi_{\alpha,\varepsilon}(t_1))=0$,  $\Phi(\xi_{\alpha,\varepsilon}(t_2))=1$ for all $\alpha=1,\dots,N$. This construction can be achieved for the case $N=1$. However, if $N>1$, due to the strong cyclotron effect as $\varepsilon\to 0$, it might be inevitable that $\xi_{\alpha,\varepsilon}(t_2)=\xi_{\beta,\varepsilon}(t_1)$ for some $\alpha\ne\beta$, $t_1,t_2\in[0,T]$, and it leads to $0=\Phi(\xi_{\alpha,\varepsilon}(t_2))=\Phi(\xi_{\beta,\varepsilon}(t_1))=1$, which implies that such test function does not always exist.
\end{Remark}

\section{Preliminary estimates}

\subsection{ First dynamical estimates}

The characteristic associated to \eqref{eq-VP-point} is the solution of the following ODEs:
\begin{equation}
\left\{
\begin{split}
&\frac{\mathrm{d}}{\mathrm{d}s}X_{\varepsilon}(s,t,x,v)=\frac{V_{\varepsilon}(s,t,x,v)}{\varepsilon},\\
&\frac{\mathrm{d}}{\mathrm{d}s}V_{\varepsilon}(s,t,x,v)=\frac{V_{\varepsilon}^{\perp}(s,t,x,v)}{\varepsilon^2}+\frac{(E_{\varepsilon}+F_{\varepsilon})(s,X_{\varepsilon}(s,t,x,v))}{\varepsilon},\\
&(X_{\varepsilon},V_{\varepsilon})(t,t,x,v)=(x,v)\in\operatorname{supp}f_{\varepsilon}(t).
\end{split}\right.
\end{equation}
Note the regularity of $E_{\varepsilon},F_{\varepsilon}$ in this paper for each fixed $\varepsilon>0$ is enough to verify the conditions in \cite{DL89ODE}. Hence the solution to the system \eqref{eq-VP-point} can be represented by
\begin{equation*}
f_{\varepsilon}(t,x,v)=f_{\varepsilon}^0(X_{\varepsilon}(0,t,x,v),V_{\varepsilon}(0,t,x,v)).
\end{equation*}
By this representation and the measure-preserving property of $(x,v)\mapsto (X_{\varepsilon},V_{\varepsilon})(s,t,x,v)$, we have the $L^p$ norms of $f_{\varepsilon}$ are conserved:
\begin{equation}\label{eq-Lp-conse}
\|f_{\varepsilon}(t)\|_{p}=\|f_{\varepsilon}^0\|_{p},\,~~\forall~1\le p\le \infty.
\end{equation}

Moreover, the energy and the momentum associated to the system \eqref{eq-VP-point} are conserved:
\begin{equation}\label{eq-energy-moment-conse}
\begin{split}
\mathcal{E}(f_{\varepsilon},\{\xi_{\alpha,\varepsilon},\eta_{\alpha,\varepsilon}\})=\mathcal{E}(f_{\varepsilon}^0,\{\xi_{\alpha,\varepsilon}^0,\eta_{\alpha,\varepsilon}^0\}),\\
\mathcal{I}(f_{\varepsilon},\{\xi_{\alpha,\varepsilon},\eta_{\alpha,\varepsilon}\})=\mathcal{I}(f_{\varepsilon}^0,\{\xi_{\alpha,\varepsilon}^0,\eta_{\alpha,\varepsilon}^0\}).
\end{split}
\end{equation}
Since $f_{\varepsilon}$ has compact velocity support, these conservative laws can be deduced rigorously by the weak formula of the solution to \eqref{eq-VP-point} and the dominated convergence theorem, see \cite{Mio16,Mio19} or \cite{WZ23VPenergy}. We omit the details for the sake of simplicity.

Thanks to the conservation laws \eqref{eq-Lp-conse} and \eqref{eq-energy-moment-conse}, we have the following a priori estimates.
\begin{Proposition}\label{prn-priori}
There exists a constant $C$ depending only on $K_0,K_1$ such that
\begin{align}
&\sup_{t\ge 0,\varepsilon>0}\iint_{\mathbb{R}^2\times\mathbb{R}^2}(\vert x\vert^2+\vert v\vert^2)f_{\varepsilon}(t,x,v)\,\mathrm{d}x\,\mathrm{d}v+\sum_{\alpha}(\vert\xi_{\alpha,\varepsilon}(t)\vert+\vert\eta_{\alpha,\varepsilon}(t)\vert)\le C,\label{eq-priori-1}\\
&\sup_{t\ge 0}\|\rho_{\varepsilon}(t)\|_{2}\le C\| f_{\varepsilon}^0\|_{\infty}^{1/2},\label{eq-priori-2}\\
&\sup_{t\ge 0,\varepsilon>0}\sum_{\alpha}\int_{\mathbb{R}^2}\big\vert\ln\vert x-\xi_{\alpha,\varepsilon}(t)\vert\big\vert\rho_{\varepsilon}(t,x)\,\mathrm{d}x\le C,\label{eq-priori-3}\\
&\sup_{t\ge 0,\varepsilon>0}\iint_{\mathbb{R}^2\times\mathbb{R}^2}\big\vert\ln\vert x-y\vert\big\vert\rho_{\varepsilon}(t,x)\rho_{\varepsilon}(t,y)\,\mathrm{d}x\,\mathrm{d}y\le C\label{eq-priori-4}.
\end{align}
And the non-concentration property holds:
\begin{equation}\label{eq-estimate-non-concentration}
\sup\limits_{t\in\mathbb{R}_{+}}\sup\limits_{0<\varepsilon<1}\sup\limits_{x_{0}\in\mathbb{R}^2}\sup\limits_{0<r<1/2}\vert\ln r\vert^{1/2}\int_{B(x_{0},r)}\rho_{\varepsilon}(t,x)\,\mathrm{d}x\le C.
\end{equation}
Moreover, there exists a constant $M>0$ depending only on $K_0,K_1$ such that
\begin{equation}\label{eq-priori-positive-distance}
\min_{t\ge 0,\varepsilon>0,\alpha\ne\beta}\vert\xi_{\alpha,\varepsilon}(t)-\xi_{\beta,\varepsilon}(t)\vert\ge M.
\end{equation}
\end{Proposition}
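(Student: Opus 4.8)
The plan is to derive all the bounds from the two conservation laws, \eqref{eq-Lp-conse} and \eqref{eq-energy-moment-conse}, together with the initial assumptions \eqref{eq-initial-assumptions}. The central difficulty is that the energy $\mathcal{E}$ is not a sum of manifestly non-negative terms: the logarithmic interaction terms $-\frac12\iint\ln|x-y|\rho\rho$, $-\sum_\alpha\int\ln|x-\xi_\alpha|\rho$ and $-\frac12\sum_{\alpha\ne\beta}\ln|\xi_\alpha-\xi_\beta|$ change sign, so one cannot read off a bound on the kinetic part directly. The standard remedy (as in \cite{Mio16,Mio19}) is to split each logarithm into its positive and negative parts, control the negative parts (the ones with the ``wrong'' sign, i.e.\ where $\ln$ is negative, coming from close encounters) by interpolation against the conserved $L^\infty$ norm and the second moment, and absorb them into the kinetic energy. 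So the first step is to write, for any $\rho$ with bounded mass $m$, second moment, and $L^\infty$ norm,
\begin{equation*}
\iint(\ln|x-y|)_-\,\rho(x)\rho(y)\,\mathrm{d}x\,\mathrm{d}y\le C\|\rho\|_\infty^{?}\quad\text{and}\quad \iint(\ln|x-y|)_+\rho(x)\rho(y)\le C\Big(m^2+\int|x|^2\rho\Big),
\end{equation*}
and similarly for the plasma-charge cross terms using $|\xi_{\alpha,\varepsilon}^0|\le K_0$; then feed the conserved quantities in. Since $\varepsilon^2\|f_\varepsilon^0\|_\infty\to0$ but $\|f_\varepsilon^0\|_\infty$ itself may blow up, one must be careful that these logarithmic corrections come with a power of $\|\rho_\varepsilon\|_\infty$ small enough — in fact the Loeper-type estimate gives $\|\rho_\varepsilon\|_\infty^{1/2}$-type growth, which when combined with $\|\ln\|$ against $L^2\cap L^1$ bounds stays harmless because $\|\rho_\varepsilon\|_2\le C\|f_\varepsilon^0\|_\infty^{1/2}$ and the $\ln$ singularity is subpolynomial; I would track this explicitly.

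Concretely: \eqref{eq-priori-2} is immediate from the interpolation $\|\rho_\varepsilon\|_2^2\le C\|f_\varepsilon\|_\infty\int|v|^2 f_\varepsilon\,\mathrm{d}x\,\mathrm{d}v$ (the classical kinetic interpolation bounding $\rho$ in $L^2(\mathbb{R}^2)$ by the $L^\infty$ norm of $f$ times the velocity energy), once \eqref{eq-priori-1} gives a uniform bound on the kinetic energy and \eqref{eq-Lp-conse} gives $\|f_\varepsilon\|_\infty=\|f_\varepsilon^0\|_\infty$. For \eqref{eq-priori-1}: from the moment conservation $\mathcal{I}(f_\varepsilon,\cdot)=\mathcal{I}(f_\varepsilon^0,\cdot)$, expanding $|x+\varepsilon v^\perp|^2-\varepsilon^2|v|^2=|x|^2+2\varepsilon x\cdot v^\perp$, we get $\int|x|^2 f_\varepsilon+\sum_\alpha|\xi_{\alpha,\varepsilon}|^2 = \int|x|^2 f_\varepsilon^0+\sum_\alpha|\xi_{\alpha,\varepsilon}^0|^2 - 2\varepsilon\int x\cdot v^\perp f_\varepsilon - 2\varepsilon\sum_\alpha\xi_{\alpha,\varepsilon}\cdot\eta_{\alpha,\varepsilon}^\perp+(\text{same at }t=0)$; then Cauchy-Schwarz and $2\varepsilon|x||v|\le \tfrac12|x|^2+2\varepsilon^2|v|^2$ lets one absorb the position part, leaving the need to control $\varepsilon^2\int|v|^2 f_\varepsilon$ and $\varepsilon^2|\eta_{\alpha,\varepsilon}|^2$, which in turn come from energy conservation once the logarithmic terms are handled as above — note the potential energy terms in $\mathcal{E}$ carry no $\varepsilon$, so the velocity energy $\tfrac12\int|v|^2 f_\varepsilon+\tfrac12\sum|\eta_{\alpha,\varepsilon}|^2$ (no $\varepsilon$ either) is bounded by $K_1$ plus the logarithmic corrections, and then $\int|x|^2f_\varepsilon$ follows. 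This is a coupled inequality in $(\int|x|^2 f_\varepsilon, \int|v|^2 f_\varepsilon, \sum|\xi|^2,\sum|\eta|^2)$ that closes by a Gronwall-free algebraic argument since all the bad terms are either $O(\varepsilon)$ or controlled by the interpolation; the main care is bookkeeping the $\varepsilon$-powers so nothing is lost as $\varepsilon\to0$.

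For \eqref{eq-priori-3} and \eqref{eq-priori-4}: having \eqref{eq-priori-1}, the positive parts $\int(\ln|x-\xi_{\alpha,\varepsilon}|)_+\rho_\varepsilon\le C\int(1+|x|^2)\rho_\varepsilon + C|\xi_{\alpha,\varepsilon}|^2\le C$ and likewise for the $\rho$-$\rho$ term; for the negative parts, $\int_{|x-\xi_{\alpha,\varepsilon}|<1}|\ln|x-\xi_{\alpha,\varepsilon}||\rho_\varepsilon\,\mathrm{d}x$ is bounded using $\|\rho_\varepsilon\|_2$ and the fact that $\ln\in L^2_{\rm loc}(\mathbb{R}^2)$, giving $\le C\|\rho_\varepsilon\|_2\le C\|f_\varepsilon^0\|_\infty^{1/2}$ — wait, this is not uniformly bounded, so instead one must use energy conservation itself: the negative parts appear in $\mathcal{E}$ with a $(+)$ sign after moving them, i.e.\ $\mathcal{E}=(\text{velocity energy})+\sum_\alpha\int(\ln|x-\xi_\alpha|)_-\rho+\dots-(\text{positive-part terms})$, and since velocity energy $\ge0$ and the negative-log terms are $\ge0$ and the positive-part terms are already bounded by \eqref{eq-priori-1}, we get $\sum_\alpha\int(\ln|x-\xi_{\alpha,\varepsilon}|)_-\rho_\varepsilon\le |K_1|+C$; combined with the bounded positive parts this yields \eqref{eq-priori-3}, and \eqref{eq-priori-4} identically. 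For the non-concentration bound \eqref{eq-estimate-non-concentration}: this is a quantitative refinement — for $x_0\in\mathbb{R}^2$ and $0<r<1/2$, split $m_r:=\int_{B(x_0,r)}\rho_\varepsilon$; using $-\iint\ln|x-y|\rho\rho\ge \text{(bounded below)}$ and isolating the contribution of $B(x_0,r)\times B(x_0,r)$ where $-\ln|x-y|\ge |\ln(2r)|$, one gets $|\ln(2r)|\,m_r^2\le \iint_{B\times B}(-\ln|x-y|)\rho\rho + C \le \iint(\ln|x-y|)_-\rho\rho + C\le C$ by \eqref{eq-priori-4}, hence $|\ln r|^{1/2}m_r\le C$. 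Finally \eqref{eq-priori-positive-distance}: from energy conservation, $-\tfrac12\sum_{\alpha\ne\beta}\ln|\xi_{\alpha,\varepsilon}-\xi_{\beta,\varepsilon}|\le \mathcal{E}(f_\varepsilon^0,\cdot) + (\text{positive-part log terms, bounded by }C)+ (\text{minus the non-negative velocity and negative-log terms})\le C$; since only the pair(s) achieving the minimum distance can contribute a large positive value to $-\ln$, and all other terms are bounded below by $C$, we get $-\ln\min_{\alpha\ne\beta}|\xi_{\alpha,\varepsilon}-\xi_{\beta,\varepsilon}|\le C$, i.e.\ the minimum distance is bounded below by $M=e^{-C}>0$. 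The main obstacle throughout is the non-coercivity of $\mathcal{E}$; once the sign-splitting of the logarithms is set up cleanly and the $\varepsilon$-powers in the moment identity are tracked, everything follows by elementary interpolation and rearrangement.
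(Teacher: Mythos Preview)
Your proposal is correct and follows essentially the same approach as the paper. The paper defers \eqref{eq-priori-1}--\eqref{eq-estimate-non-concentration} to \cite{Mio19} and only writes out \eqref{eq-priori-positive-distance} explicitly, via exactly the sign-splitting of the logarithmic charge--charge interaction that you describe: bounding $\sum_{\alpha\ne\beta}\ln_-|\xi_{\alpha,\varepsilon}-\xi_{\beta,\varepsilon}|$ by $2\mathcal{E}$ plus the $\ln_+$ terms, then controlling the latter by $|x-y|\le|x|+|y|$ and the already-established moment bounds \eqref{eq-priori-1}; your sketch of the remaining estimates (moment--energy coupling closing algebraically, kinetic interpolation for \eqref{eq-priori-2}, and the $|\ln(2r)|\,m_r^2\le\iint\ln_-|x-y|\rho\rho$ argument for non-concentration) is the standard route taken in \cite{Mio19}.
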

\begin{proof}

\eqref{eq-priori-1}-\eqref{eq-estimate-non-concentration} have been proved in \cite[Cor.2.2]{Mio19} for the case of single point charge, which can be extended to the case of multi-point charges as follows.  

Note the positive part of $-\ln$ is $\ln_-$. Accordingly, we define the sum of the positive components of energy as
\begin{align*}
\mathcal{E}_{\rm pos}(f_{\varepsilon},\{\xi_{\alpha,\varepsilon},\eta_{\alpha,\varepsilon}\}):=&\,\frac{1}{2}\iint_{\mathbb{R}^2\times\mathbb{R}^2}\vert v\vert^2f_{\varepsilon}\,\mathrm{d}x\,\mathrm{d}v+\frac{1}{2}\sum_{\alpha}\vert\eta_{\alpha,\varepsilon}\vert^2\\
&+\frac{1}{2}\iint_{\mathbb{R}^2\times\mathbb{R}^2}\ln_{-}\vert x-y\vert\rho_{\varepsilon}(t,x)\rho_{\varepsilon}(t,y)\,\mathrm{d}x\,\mathrm{d}y\\
&+\sum_{\alpha}\int_{\mathbb{R}^2}\ln_{-}\vert x-\xi_{\alpha,\varepsilon}(t)\vert\rho_{\varepsilon}(t,x)\,\mathrm{d}x+\frac{1}{2}\sum_{\alpha\ne\beta}\ln_{-}\vert \xi_{\alpha}-\xi_{\beta}\vert.
\end{align*}
By the energy conservation and the inequality $\ln_+r\le r$ for all $r>0$, we have
\begin{align}
\mathcal{E}_{\rm pos}(f_{\varepsilon},\{\xi_{\alpha,\varepsilon},\eta_{\alpha,\varepsilon}\})&\le\mathcal{E}(f_{\varepsilon},\{\xi_{\alpha,\varepsilon},\eta_{\alpha,\varepsilon}\})+\frac{1}{2}\iint_{\mathbb{R}^2\times\mathbb{R}^2}\ln_{+}\vert x-y\vert\rho_{\varepsilon}(x)\rho_{\varepsilon}(y)\,\mathrm{d}x\,\mathrm{d}y\nonumber\\
&\quad+\sum_{\alpha}\int_{\mathbb{R}^2}\ln_{+}\vert x-\xi_{\alpha,\varepsilon}\vert\rho_{\varepsilon}(x)\,\mathrm{d}x+\frac{1}{2}\sum_{\alpha\ne\beta}\ln_{+}\vert \xi_{\alpha,\varepsilon}-\xi_{\beta,\varepsilon}\vert\nonumber\\
&\le\mathcal{E}(f_{\varepsilon}^0,\{\xi_{\alpha,\varepsilon}^0,\eta_{\alpha,\varepsilon}^0\})+\frac{1}{2}\iint_{\mathbb{R}^2\times\mathbb{R}^2}\left(\vert x\vert+\vert y\vert\right)\rho_{\varepsilon}(x)\rho_{\varepsilon}(y)\,\mathrm{d}x\,\mathrm{d}y\nonumber\\
&\quad+\sum_{\alpha}\int_{\mathbb{R}^2}\left(\vert x\vert+\vert\xi_{\alpha,\varepsilon}\vert\right)\rho_{\varepsilon}(x)\,\mathrm{d}x+\frac{1}{2}\sum_{\alpha\ne\beta}\left(\vert \xi_{\alpha,\varepsilon}\vert+\vert\xi_{\beta,\varepsilon}\vert\right)\nonumber\\
&\lesssim 1+[\mathcal{I}_{\rm ine}(f_{\varepsilon},\{\xi_{\alpha,\varepsilon},\eta_{\alpha,\varepsilon}\})]^{\frac{1}{2}}\label{eq-2024-2-1-01}.
\end{align}
Here we use the notation $a\lesssim b$ to denote $a\le C b$, where the constant $C$ depends only on $\mathcal{E}(f_{\varepsilon}^0,\{\xi_{\alpha,\varepsilon}^0,\eta_{\alpha,\varepsilon}^0\})$ and $\|f_{\varepsilon}^0\|_{L^1}$.

Notice
\begin{equation*}
\mathcal{I}_{\rm ine}(f_{\varepsilon},\{\xi_{\alpha,\varepsilon},\eta_{\alpha,\varepsilon}\})=\mathcal{I}(f_{\varepsilon},\{\xi_{\alpha,\varepsilon},\eta_{\alpha,\varepsilon}\})-2\varepsilon \int_{\mathbb{R}^2}x\cdot v^{\perp}f_{\varepsilon}\,\mathrm{d}x\,\mathrm{d}v-2\varepsilon\sum_{\alpha}\xi_{\alpha,\varepsilon}\cdot\eta_{\alpha,\varepsilon}^{\perp}.
\end{equation*}
By the conservation of the momentum, the Young's inequality for products, the definitions of $\mathcal{I}_{\rm ine},\mathcal{E}_{\rm pos}$ and \eqref{eq-2024-2-1-01}, we have
\begin{align*}
\mathcal{I}_{\rm ine}(f_{\varepsilon},\{\xi_{\alpha,\varepsilon},\eta_{\alpha,\varepsilon}\})&\le\mathcal{I}(f_{\varepsilon}^0,\{\xi_{\alpha,\varepsilon}^0,\eta_{\alpha,\varepsilon}^0\})+\frac{1}{2}\int_{\mathbb{R}^2}\vert x\vert^2f_{\varepsilon}\,\mathrm{d}x\,\mathrm{d}v\\
&\quad+2\varepsilon^2\int_{\mathbb{R}^2}\vert v\vert^2f_{\varepsilon}\,\mathrm{d}x\,\mathrm{d}v+\frac{1}{2}\sum_{\alpha}\vert\xi_{\alpha,\varepsilon}\vert^2+2\varepsilon^2\sum_{\alpha}\vert\eta_{\alpha,\varepsilon}\vert^2\\
&\le\mathcal{I}(f_{\varepsilon}^0,\{\xi_{\alpha,\varepsilon}^0,\eta_{\alpha,\varepsilon}^0\})+\frac{1}{2}\mathcal{I}_{\rm ine}(f_{\varepsilon},\{\xi_{\alpha,\varepsilon},\eta_{\alpha,\varepsilon}\})+4\varepsilon^2\mathcal{E}_{\rm pos}(f_{\varepsilon},\{\xi_{\alpha,\varepsilon},\eta_{\alpha,\varepsilon}\})\\
&\le C+\frac{1}{2}\mathcal{I}_{\rm ine}(f_{\varepsilon},\{\xi_{\alpha,\varepsilon},\eta_{\alpha,\varepsilon}\})+C[\mathcal{I}_{\rm ine}(f_{\varepsilon},\{\xi_{\alpha,\varepsilon},\eta_{\alpha,\varepsilon}\})]^{\frac{1}{2}},
\end{align*}
which implies $\mathcal{I}_{\rm ine}(f_{\varepsilon},\{\xi_{\alpha,\varepsilon},\eta_{\alpha,\varepsilon}\})\le C$. By \eqref{eq-2024-2-1-01} again we have $\mathcal{E}_{\rm pos}(f_{\varepsilon},\{\xi_{\alpha,\varepsilon},\eta_{\alpha,\varepsilon}\})\le C$ and \eqref{eq-priori-1} follows immediately. To prove \eqref{eq-priori-3}-\eqref{eq-priori-4}, we have
\begin{align*}
&\sum_{\alpha}\int_{\mathbb{R}^2}\big\vert\ln\vert x-\xi_{\alpha,\varepsilon}(t)\vert\big\vert\rho_{\varepsilon}(t,x)\,\mathrm{d}x+\iint_{\mathbb{R}^2\times\mathbb{R}^2}\big\vert\ln\vert x-y\vert\big\vert\rho_{\varepsilon}(t,x)\rho_{\varepsilon}(t,y)\,\mathrm{d}x\,\mathrm{d}y\\
&\le 2\mathcal{E}_{\rm pos}(f_{\varepsilon},\{\xi_{\alpha,\varepsilon},\eta_{\alpha,\varepsilon}\})+\sum_{\alpha}\int_{\mathbb{R}^2}\ln_{+}\vert x-\xi_{\alpha,\varepsilon}(t)\vert\rho_{\varepsilon}(t,x)\,\mathrm{d}x\\
&\quad+\iint_{\mathbb{R}^2\times\mathbb{R}^2}\ln_{+}\vert x-y\vert\rho_{\varepsilon}(t,x)\rho_{\varepsilon}(t,y)\,\mathrm{d}x\,\mathrm{d}y\\
&\le 2\mathcal{E}_{\rm pos}(f_{\varepsilon},\{\xi_{\alpha,\varepsilon},\eta_{\alpha,\varepsilon}\})+\sum_{\alpha}\int_{\mathbb{R}^2}\left(\vert x\vert^2+\vert\xi_{\alpha,\varepsilon}(t)\vert^2+2\right)\rho_{\varepsilon}(t,x)\,\mathrm{d}x\\
&\quad+\iint_{\mathbb{R}^2\times\mathbb{R}^2}\left(\vert x\vert^2+\vert y\vert^2+2\right)\rho_{\varepsilon}(t,x)\rho_{\varepsilon}(t,y)\,\mathrm{d}x\,\mathrm{d}y\\
&\le C.
\end{align*}
To prove \eqref{eq-priori-positive-distance}, we have
\begin{align*}
\sum_{\alpha\ne\beta}\ln_{-}\vert \xi_{\alpha,\varepsilon}(t)-\xi_{\beta,\varepsilon}(t)\vert\le 2\mathcal{E}_{\rm pos}(f_{\varepsilon},\{\xi_{\alpha,\varepsilon},\eta_{\alpha,\varepsilon}\})\le C.
\end{align*}
We fix the constant $C$ in the last inequality to be $\tilde{M}>0$ depending only on $K_0,K_1$, hence  we obtain
$\vert\xi_{\alpha,\varepsilon}(t)-\xi_{\beta,\varepsilon}(t)\vert\ge e^{-\tilde{M}}=:M$.

Finally, \eqref{eq-priori-2} is classical by the interpolation inequality, and \eqref{eq-estimate-non-concentration} is the consequence of \eqref{eq-priori-1}, \eqref{eq-priori-4}.

\end{proof}

\subsection{$L_t^{\infty}H_x^{-1}$-bound of the densities}\label{subsec-LH-1-bound-rho}

In this subsection, we adopt the method in \cite{Mio16} to obtain the uniform $L_t^{\infty}H_x^{-1}$-bound of $\rho_{\varepsilon}$. Firstly, we decompose $\rho_{\varepsilon}$ into a zero-mean part $\widetilde{\rho}_{\varepsilon}$ and a smooth part $\bar{\rho}_{\varepsilon}$. Let $\bar{\rho}_{\varepsilon}(x)=\|f_{\varepsilon}^0\|_{1}\psi(x)$, where $\psi\in C_c^{\infty}(\mathbb{R}^2,\mathbb{R}_+)$ with $\operatorname{supp}\psi\subset B(0,1)$ and $\|\psi\|_{1}=1$. Denote 
\begin{equation*}
\widetilde{\rho}_{\varepsilon}=\rho_{\varepsilon}-\bar{\rho}_{\varepsilon},\quad\bar{E}_{\varepsilon}=\frac{x}{\vert x\vert^2}*\bar{\rho}_{\varepsilon},\quad\widetilde{E}_{\varepsilon}=\frac{x}{\vert x\vert^2}*\widetilde{\rho}_{\varepsilon}.
\end{equation*}
Notice $\|\bar{\rho}_{\varepsilon}\|_{L^1\cap L^{\infty}}\le (1+\|\psi\|_{\infty})\|f_{\varepsilon}^0\|_{1}$. Therefore, we only need to deal with the zero-mean parts $\widetilde{\rho}_{\varepsilon}$ next.

We recall the classical result from potential theory, see e.g., \cite[Lem.3]{GNPS05}.
\begin{Lemma}\label{lem-2D-potential}
Let $\rho\in L^2(\mathbb{R}^2)$ be such that
\begin{equation*}
\int_{\mathbb{R}^2}(1+\vert x\vert)\vert\rho(x)\vert\,\mathrm{d}x<\infty,\quad\int_{\mathbb{R}^2}\rho(x)\,\mathrm{d}x=0.
\end{equation*}
Consider the potential $U_{\rho}=-\ln\vert x\vert*\rho$. Then $U_{\rho}\in C_0(\mathbb{R}^2)$ and $\nabla U_{\rho}\in L^2(\mathbb{R}^2)$. In particular, the identity holds:
\begin{equation*}
2\pi\int_{\mathbb{R}^2}\rho(x)U_{\rho}(x)\,\mathrm{d}x=\int_{\mathbb{R}^2}\vert\nabla U_{\rho}(x)\vert^2\,\mathrm{d}x.
\end{equation*}
\end{Lemma}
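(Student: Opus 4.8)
The plan is to establish the three assertions in turn: decay of $U_\rho$ at infinity, continuity of $U_\rho$, and the $L^2$-bound on $\nabla U_\rho$ together with the energy identity — the last obtained by a smooth approximation of $\rho$.

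For the decay and continuity I would exploit the zero-mean hypothesis to write, for $|x|\ge 2$,
\begin{equation*}
U_\rho(x)=-\int_{\mathbb{R}^2}\big(\ln|x-y|-\ln|x|\big)\,\rho(y)\,\mathrm{d}y,
\end{equation*}
and split this integral over $\{|y|\le|x|/2\}$, where $\big|\ln|x-y|-\ln|x|\big|\le 2|y|/|x|$ so the contribution is $\le\frac{2}{|x|}\int(1+|y|)|\rho|$, and over $\{|y|>|x|/2\}$, which I split further according to whether $|x-y|\le1$ or $|x-y|>1$: on the latter region $\big|\ln|x-y|\big|+\big|\ln|x|\big|\le C|y|$ and $\int_{|y|>|x|/2}|y||\rho|\to0$; on the former, Cauchy--Schwarz and $\|\rho\|_{L^2(B(x,1))}\to0$ (tail of an $L^2$ function) close the estimate. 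Hence $U_\rho(x)\to0$ as $|x|\to\infty$; running the same bounds for bounded $x$ shows that $U_\rho$ is a well-defined, locally bounded function and that, for any mean-zero $g$ with finite right-hand side,
\begin{equation*}
\|U_g\|_\infty\le C\Big(\|g\|_{L^2(\mathbb{R}^2)}+\int_{\mathbb{R}^2}(1+|y|)|g(y)|\,\mathrm{d}y\Big),
\end{equation*}
a bound I will reuse. For continuity, $\nabla U_\rho=-E_\rho$ in the sense of distributions, and since $\rho\in L^1\cap L^2$, Lemma~\ref{lem-field-rho-interpo} gives $E_\rho\in L^q(\mathbb{R}^2)$ for every $q\in(2,\infty)$; thus $U_\rho\in W^{1,3}_{\mathrm{loc}}(\mathbb{R}^2)\hookrightarrow C(\mathbb{R}^2)$ by Morrey's embedding, and together with the decay this gives $U_\rho\in C_0(\mathbb{R}^2)$.

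For the remaining part I would choose $\rho_n\in C_c^\infty(\mathbb{R}^2)$ with $\int\rho_n=0$ and $\|\rho_n-\rho\|_{L^1}+\|\rho_n-\rho\|_{L^2}+\int(1+|y|)|\rho_n-\rho|\,\mathrm{d}y\to0$ (mollify, truncate, then subtract a small multiple of a fixed bump function to restore the exact zero mean, the correction being harmless since its size tends to $0$). For such $\rho_n$ the potential $U_{\rho_n}$ is smooth with $U_{\rho_n}(x)=O(|x|^{-1})$ and $\nabla U_{\rho_n}(x)=O(|x|^{-2})$ as $|x|\to\infty$ — the zero-mean cancellation upgrades the Coulomb decay to dipole decay — so $\nabla U_{\rho_n}\in L^2(\mathbb{R}^2)$ and integration by parts over large balls is legitimate, using $-\Delta(-\ln|\cdot|)=2\pi\delta_0$:
\begin{equation*}
\int_{\mathbb{R}^2}|\nabla U_{\rho_n}|^2\,\mathrm{d}x=\int_{\mathbb{R}^2}U_{\rho_n}(-\Delta U_{\rho_n})\,\mathrm{d}x=2\pi\int_{\mathbb{R}^2}\rho_n\,U_{\rho_n}\,\mathrm{d}x.
\end{equation*}
Applying this to $\rho_n-\rho_m$, which is again smooth, compactly supported and mean-zero, and invoking the sup-bound above, $\|\nabla U_{\rho_n}-\nabla U_{\rho_m}\|_2^2\le 2\pi\|\rho_n-\rho_m\|_{L^1}\|U_{\rho_n-\rho_m}\|_\infty\to0$; hence $\{\nabla U_{\rho_n}\}$ is Cauchy in $L^2$, and its limit is $\nabla U_\rho$ since $U_{\rho_n}\to U_\rho$ uniformly (same bound applied to $\rho_n-\rho$). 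Passing to the limit in the displayed identity — the left-hand side by $L^2$-convergence of the gradients, the right-hand side by $\rho_n\to\rho$ in $L^1$ and $U_{\rho_n}\to U_\rho$ uniformly with $U_\rho$ bounded — yields $\int|\nabla U_\rho|^2=2\pi\int\rho\,U_\rho$ and $\nabla U_\rho\in L^2(\mathbb{R}^2)$.

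The main obstacle is this last step: one must produce approximants that are simultaneously smooth, compactly supported, exactly mean-zero, and convergent in all three norms, and rule out a surviving boundary term at infinity — which is exactly where both hypotheses on $\rho$ are used (the $L^2$ bound to kill the tail of $U_\rho$, the first-moment bound to produce the dipole decay of $\nabla U_{\rho_n}$). An alternative for this step is a Fourier-side computation: $\widehat{U_\rho}(\xi)=c|\xi|^{-2}\hat\rho(\xi)$, so $\|\nabla U_\rho\|_2^2=c'\int|\hat\rho(\xi)|^2|\xi|^{-2}\,\mathrm{d}\xi$, finite since $\hat\rho\in L^2$ controls large $\xi$ while $\hat\rho(0)=\int\rho=0$ together with $|\nabla\hat\rho|\le\int|y||\rho|<\infty$ gives $|\hat\rho(\xi)|\lesssim|\xi|$ near the origin, the identity then following from Plancherel; I would nonetheless present the approximation argument as the main line.
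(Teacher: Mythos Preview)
The paper does not prove this lemma at all: it is stated as a classical fact from potential theory with a reference to \cite[Lem.~3]{GNPS05}, and is then used as a black box in Proposition~\ref{prn-H-1-bound-density}. So there is no proof in the paper to compare your proposal against.

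Your argument is a correct self-contained proof. A few remarks: the sup-norm bound $\|U_g\|_\infty\le C(\|g\|_{L^2}+\int(1+|y|)|g|)$ is valid as written once one splits into $|x|\ge2$ (where the zero-mean subtraction of $\ln|x|$ does the work, exactly as you outline) and $|x|<2$ (where one estimates directly, the zero-mean not being needed); the continuity step via $E_\rho\in L^q$ for all $q\in(2,\infty)$ and Morrey is clean; and the approximation step is standard --- convergence of the mollified/truncated/mean-corrected sequence in all three norms $L^1$, $L^2$, and $L^1((1+|y|)\,\mathrm{d}y)$ holds by routine arguments, and the dipole decay $U_{\rho_n}=O(|x|^{-1})$, $\nabla U_{\rho_n}=O(|x|^{-2})$ for compactly supported mean-zero $\rho_n$ does kill the boundary term. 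The Fourier alternative you mention at the end is indeed the shortest route to both $\nabla U_\rho\in L^2$ and the identity, and is essentially how the cited reference proceeds; either line would be acceptable here.
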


Follow the technique in \cite{Mio16}, we have the uniform bound of the zero-mean parts.
\begin{Proposition}\label{prn-H-1-bound-density}
$\widetilde{\rho}_{\varepsilon}$ is uniformly bounded in $L^{\infty}(\mathbb{R}_+,H^{-1}(\mathbb{R}^2))$ and
\begin{align*}
\sup_{t\ge0,\varepsilon>0}\int_{\mathbb{R}^2}\vert\widetilde{E}_{\varepsilon}\vert^2\,\mathrm{d}x\le C.
\end{align*}
\end{Proposition}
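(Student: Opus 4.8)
The plan is to control $\widetilde{\rho}_\varepsilon$ in $H^{-1}$ by exhibiting its energy $\int |\widetilde E_\varepsilon|^2\,dx$ as (essentially) a piece of the conserved total energy, estimated above by the a priori bounds of Proposition~\ref{prn-priori}. Concretely, since $\widetilde\rho_\varepsilon = \rho_\varepsilon - \bar\rho_\varepsilon$ has zero mean, total mass bounded by $K_0$, and (from \eqref{eq-priori-1}) a bounded first moment, while $\widetilde\rho_\varepsilon \in L^2$ uniformly in $t$ by \eqref{eq-priori-2} together with the control on $\bar\rho_\varepsilon$, Lemma~\ref{lem-2D-potential} applies to $U_{\widetilde\rho_\varepsilon} = -\ln|x| * \widetilde\rho_\varepsilon = 2\pi(-\Delta)^{-1}\widetilde\rho_\varepsilon$ (up to the standard normalization $\widetilde E_\varepsilon = \nabla U_{\widetilde\rho_\varepsilon}/(2\pi)$ modulo sign), giving the identity $\|\nabla U_{\widetilde\rho_\varepsilon}\|_2^2 = 2\pi \int \widetilde\rho_\varepsilon U_{\widetilde\rho_\varepsilon}\,dx$. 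So it suffices to bound $\int \widetilde\rho_\varepsilon\, U_{\widetilde\rho_\varepsilon}\,dx$ uniformly in $t,\varepsilon$, and then $\|\widetilde\rho_\varepsilon\|_{H^{-1}}^2 \sim \|\nabla U_{\widetilde\rho_\varepsilon}\|_2^2 \sim \int|\widetilde E_\varepsilon|^2\,dx$ follows.

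Next I would expand $\int \widetilde\rho_\varepsilon U_{\widetilde\rho_\varepsilon}\,dx$ using $\widetilde\rho_\varepsilon = \rho_\varepsilon - \bar\rho_\varepsilon$ into four bilinear terms:
\begin{equation*}
\iint -\ln|x-y|\,\rho_\varepsilon(x)\rho_\varepsilon(y) - 2\iint -\ln|x-y|\,\rho_\varepsilon(x)\bar\rho_\varepsilon(y) + \iint -\ln|x-y|\,\bar\rho_\varepsilon(x)\bar\rho_\varepsilon(y).
\end{equation*}
The first term is, up to the factor $\tfrac12$, exactly the plasma self-interaction term in the energy $\mathcal{E}$; using the conservation \eqref{eq-energy-moment-conse}, the bound $K_1$, and moving the other (sign-definite or controllable) pieces of $\mathcal E$ to the other side — namely the kinetic energy $\tfrac12\int|v|^2 f_\varepsilon \ge 0$, the plasma-charge cross term $\sum_\alpha\int -\ln|x-\xi_{\alpha,\varepsilon}|\rho_\varepsilon$ which is bounded by \eqref{eq-priori-3}, and the charge-charge term $\tfrac12\sum_{\alpha\ne\beta}-\ln|\xi_{\alpha,\varepsilon}-\xi_{\beta,\varepsilon}|$ which is bounded by \eqref{eq-priori-positive-distance} and \eqref{eq-priori-1} — one gets $\iint -\ln|x-y|\rho_\varepsilon\rho_\varepsilon\,dx\,dy \le C$; the matching lower bound $\ge -C$ follows from \eqref{eq-priori-4}, so this term is bounded in absolute value. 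The cross term $\iint -\ln|x-y|\rho_\varepsilon\bar\rho_\varepsilon$ is handled by splitting $\ln = \ln_+ - \ln_-$: the $\ln_+$ part is dominated by $\iint(|x|+|y|+1)\rho_\varepsilon\bar\rho_\varepsilon$, controlled by the first moment bound and $\|\bar\rho_\varepsilon\|_{L^1}\le K_0$; the $\ln_-$ part is controlled since $\bar\rho_\varepsilon = \|f_\varepsilon^0\|_1\psi$ is bounded in $L^\infty$ with compact support, so $-\ln|\cdot|*\bar\rho_\varepsilon \in L^\infty$ and we pair it with $\rho_\varepsilon \in L^1$. The pure $\bar\rho_\varepsilon$ term is manifestly bounded by the same reasoning. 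Summing, $0 \le \|\nabla U_{\widetilde\rho_\varepsilon}\|_2^2 = 2\pi\int\widetilde\rho_\varepsilon U_{\widetilde\rho_\varepsilon}\,dx \le C$.

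Finally, I would record that $\widetilde E_\varepsilon$ is (a constant multiple of) $\nabla U_{\widetilde\rho_\varepsilon}$, so $\sup_{t\ge0,\varepsilon>0}\int|\widetilde E_\varepsilon|^2\,dx \le C$; and since $\widehat{\widetilde\rho_\varepsilon}(\zeta) = |\zeta|^2 \widehat{U_{\widetilde\rho_\varepsilon}}(\zeta)/(2\pi)$ up to constants, $\|\widetilde\rho_\varepsilon\|_{H^{-1}}^2 = \int |\zeta|^{-2}|\widehat{\widetilde\rho_\varepsilon}|^2\,d\zeta \sim \int|\zeta|^2|\widehat{U_{\widetilde\rho_\varepsilon}}|^2 = \|\nabla U_{\widetilde\rho_\varepsilon}\|_2^2 \le C$, uniformly in $t,\varepsilon$. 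The main obstacle is not any single estimate but the careful bookkeeping of signs when reading off $\iint -\ln|x-y|\rho_\varepsilon\rho_\varepsilon$ from the conserved energy: one must be sure that every term of $\mathcal E$ other than the plasma self-interaction is bounded \emph{both} ways (or at least on the correct side), which is precisely where the multi-charge a priori bounds \eqref{eq-priori-3} and \eqref{eq-priori-positive-distance} — the genuinely new inputs compared to the single-charge case — enter; everything else is a routine application of Lemma~\ref{lem-2D-potential} and the Hardy–Littlewood–Sobolev / compact-support splitting of the logarithm.
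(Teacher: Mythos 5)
Your proof is essentially correct, but at one point it takes a slightly different (and somewhat redundant) route from the paper. You and the paper agree on the skeleton: apply Lemma~\ref{lem-2D-potential} to $\widetilde\rho_\varepsilon$, expand $2\pi\int\widetilde\rho_\varepsilon U_{\widetilde\rho_\varepsilon}\,\mathrm{d}x$ into the three/four bilinear log-potential terms, bound the cross term via $\ln_+\le |x|+|y|$ plus \eqref{eq-priori-1}, and bound the $\bar\rho_\varepsilon\bar\rho_\varepsilon$ term via compact support. The difference is in the plasma self-interaction term $\iint-\ln|x-y|\rho_\varepsilon\rho_\varepsilon$: the paper simply bounds it in absolute value by citing the already-established a priori estimate \eqref{eq-priori-4}, whereas you re-open the conserved energy $\mathcal{E}$ and move the kinetic, cross and charge-charge terms (using \eqref{eq-priori-3}, \eqref{eq-priori-1}, \eqref{eq-priori-positive-distance}) to the other side to extract an upper bound from scratch. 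Your route is more self-contained — it makes explicit how the bound ultimately stems from energy conservation, which \eqref{eq-priori-4} itself encodes — but it is redundant in the end, because you then invoke \eqref{eq-priori-4} anyway for the "matching lower bound," and a moment's thought shows \eqref{eq-priori-4} alone already gives the two-sided control you need for that term. Moreover, the two-sided bound discussion is superfluous: since $\int|\widetilde E_\varepsilon|^2\,\mathrm{d}x\ge 0$, only the upper bound on the right-hand side is needed, and for the cross term you can drop the $\ln_-$ piece outright (it has the favourable sign after the $-2$ factor) instead of bounding $-\ln|\cdot|*\bar\rho_\varepsilon$ in $L^\infty$. One small caution on phrasing: $\|\rho_\varepsilon\|_2\le C\|f_\varepsilon^0\|_\infty^{1/2}$ is uniform in $t$ but not in $\varepsilon$; this is fine — Lemma~\ref{lem-2D-potential} only needs $\widetilde\rho_\varepsilon(t)\in L^2$ for each fixed $\varepsilon$, and the uniform bound comes from the log-potential pairing, not from $\|\widetilde\rho_\varepsilon\|_2$ — but your wording "uniformly in $t$" should not be misread as uniform in $\varepsilon$.
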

\begin{proof}
Notice $\widetilde{\rho}_{\varepsilon}$ satisfies the assumption in Lemma~\ref{lem-2D-potential} and
\begin{align*}
\frac{1}{2\pi}\int_{\mathbb{R}^2}\vert\widetilde{E}_{\varepsilon}\vert^2\,\mathrm{d}x&=-\iint_{\mathbb{R}^2\times\mathbb{R}^2}\ln\vert x-y\vert\rho_{\varepsilon}(x)\rho_{\varepsilon}(y)\,\mathrm{d}x\,\mathrm{d}y-\iint_{\mathbb{R}^2\times\mathbb{R}^2}\ln\vert x-y\vert\bar{\rho}_{\varepsilon}(x)\bar{\rho}_{\varepsilon}(y)\,\mathrm{d}x\,\mathrm{d}y\\
&\quad+2\iint_{\mathbb{R}^2\times\mathbb{R}^2}\ln\vert x-y\vert\rho_{\varepsilon}(x)\bar{\rho}_{\varepsilon}(y)\,\mathrm{d}x\,\mathrm{d}y\\
&\le\iint_{\mathbb{R}^2\times\mathbb{R}^2}\big\vert\ln\vert x-y\vert\big\vert\rho_{\varepsilon}(x)\rho_{\varepsilon}(y)+\|f_{\varepsilon}^0\|_{1}^2\|\psi\|_{\infty}^2\iint_{B(0,1)\times B(0,1)}\big\vert\ln\vert x-y\vert\big\vert\\
&\quad+2\iint_{\mathbb{R}^2\times\mathbb{R}^2}\ln_+\vert x-y\vert\rho_{\varepsilon}(x)\bar{\rho}_{\varepsilon}(y).
\end{align*}
Notice $\ln_+\vert x-y\vert\le \vert x\vert+\vert y\vert$ and $(x,y)\mapsto\ln\vert x-y\vert$ is locally integrable in $\mathbb{R}^4$. By \eqref{eq-priori-1} and \eqref{eq-priori-4} we have
\begin{align*}
\sup_{t\ge0,\varepsilon>0}\int_{\mathbb{R}^2}\vert\widetilde{E}_{\varepsilon}\vert^2\,\mathrm{d}x\le C,
\end{align*}
which implies that $\widetilde{\rho}_{\varepsilon}$ is uniformly bounded in $L^{\infty}(\mathbb{R}_+,H^{-1}(\mathbb{R}^2))$. The proof is complete.
\end{proof}

\begin{Remark}
Notice by the H-L-S theorem, $\bar{E}_{\varepsilon}\in L_{\rm w}^2\cap L^{q}\not\subset L^2(\mathbb{R}^2)$ for any $2<q<\infty$, where $L_{\rm w}^2(\mathbb{R}^2)$ denotes the weak Lebesgue space, i.e., the set of all measurable functions $g$ satisfying
\begin{equation*}
\sup_{\lambda>0}\lambda^{2}\mathcal{L}(\{x\in\mathbb{R}^2\,:\,\vert g(x)\vert>\lambda\})<\infty.
\end{equation*}
Hence the proposition above does not imply $E_{\varepsilon}\in L^2$.
\end{Remark}

Combining the estimate of $\bar{\rho}_{\varepsilon}$ and Proposition~\ref{prn-H-1-bound-density}, we obtain the uniform $L_t^{\infty}H_x^{-1}$-bound of $\rho_{\varepsilon}$.

\section{Estimates for the point charges}\label{sec-est-point-singular-field}
In this section we focus on estimates for the point charges. In particular we obtain the bound of time integral of the field $E_{\varepsilon}$ along the trajectories of point charges, which will be used in the next section. 

Firstly, we define the pointwise energy function as
\begin{equation*}
h_{\varepsilon}(t,x,v)=\frac{\vert v\vert^2}{2}+\sum_{\alpha}\big(\vert x-\xi_{\alpha,\varepsilon}(t)\vert-\ln\vert x-\xi_{\alpha,\varepsilon}(t)\vert\big)+K,
\end{equation*}
where $K>1$ is a constant. Notice $h_{\varepsilon}(t,x,v)\ge \vert v\vert^2/2$. We denote
\begin{equation*}
H_{k,\varepsilon}(t)=\sup_{0\le s\le t}\widetilde{H}_{k,\varepsilon}(s),\quad \widetilde{H}_{k,\varepsilon}(t)=\iint_{\mathbb{R}^2\times\mathbb{R}^2} h_{\varepsilon}^{k/2}f_{\varepsilon}(t,x,v)\,\mathrm{d}x\,\mathrm{d}v.
\end{equation*}

We have the following kinetic interpolation inequality, which is a trivial extension of \cite[Lem.3.1]{GS99}.
\begin{Lemma}\label{lem-interpo-Hk}
For all $0\le k\le l$, we have
\begin{equation*}
\Big\|\int_{\mathbb{R}^2} h_{\varepsilon}^{k/2}f_{\varepsilon}(t,\cdot,v)\,\mathrm{d}v\Big\|_{\frac{l+2}{k+2}}\le C \|f_{\varepsilon}^0\|_{\infty}^{\frac{l-k}{l+2}}H_{l,\varepsilon}(t)^{\frac{k+2}{l+2}}.
\end{equation*}
In particular, $H_{k,\varepsilon}(t)\le C'$ for all $0\le k\le 2$, where $C$ depends only on $k,l$; $C'$ depends only on $K_0,K_1$.
\end{Lemma}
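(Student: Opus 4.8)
The plan is to run the classical kinetic interpolation of Golse--Saint-Raymond, being careful that the weight $h_{\varepsilon}$ now depends on $x$ through the point charges. Fix $t$ and $x$ and write $h_{\varepsilon}(t,x,v)=\tfrac12|v|^2+a_{\varepsilon}(t,x)$, where $a_{\varepsilon}(t,x)=\sum_{\alpha}\big(|x-\xi_{\alpha,\varepsilon}(t)|-\ln|x-\xi_{\alpha,\varepsilon}(t)|\big)+K\ge K$ because $r-\ln r\ge 1$ for every $r>0$. The decisive geometric fact is that, for any threshold $A>0$, the sublevel set $\{v:h_{\varepsilon}(t,x,v)\le A\}$ is a disc of Lebesgue measure $2\pi\big(A-a_{\varepsilon}(t,x)\big)_{+}\le 2\pi A$. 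Setting $F:=\|f_{\varepsilon}(t)\|_{\infty}=\|f_{\varepsilon}^0\|_{\infty}$ by \eqref{eq-Lp-conse} and $B_{\varepsilon}(t,x):=\int_{\mathbb{R}^2}h_{\varepsilon}^{l/2}f_{\varepsilon}(t,x,v)\,\mathrm{d}v$, I would split the $v$-integral at level $A$: on $\{h_{\varepsilon}\le A\}$ bound $h_{\varepsilon}^{k/2}\le A^{k/2}$ and $f_{\varepsilon}\le F$, and on $\{h_{\varepsilon}>A\}$ write $h_{\varepsilon}^{k/2}=h_{\varepsilon}^{-(l-k)/2}h_{\varepsilon}^{l/2}\le A^{-(l-k)/2}h_{\varepsilon}^{l/2}$. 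This gives
\[
\int_{\mathbb{R}^2}h_{\varepsilon}^{k/2}f_{\varepsilon}(t,x,v)\,\mathrm{d}v\le 2\pi F A^{(k+2)/2}+A^{-(l-k)/2}B_{\varepsilon}(t,x),
\]
and optimizing the right-hand side over $A>0$ (the minimizer being of order $(B_{\varepsilon}/F)^{2/(l+2)}$) yields the pointwise bound $\int h_{\varepsilon}^{k/2}f_{\varepsilon}\,\mathrm{d}v\le C(k,l)\,F^{(l-k)/(l+2)}B_{\varepsilon}(t,x)^{(k+2)/(l+2)}$; when $k=l$ no splitting is needed and the bound is trivial.

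Next I would raise this pointwise inequality to the power $q:=\frac{l+2}{k+2}$ and integrate in $x$. Since $q\cdot\frac{k+2}{l+2}=1$, the exponent of $B_{\varepsilon}$ becomes $1$, so that $\int_{\mathbb{R}^2}B_{\varepsilon}(t,x)\,\mathrm{d}x=\widetilde{H}_{l,\varepsilon}(t)\le H_{l,\varepsilon}(t)$; taking the $q$-th root produces exactly the claimed inequality, with a constant depending only on $k$ and $l$.

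For the ``in particular'' assertion I expect the only delicate point: one cannot simply take $l$ large in the inequality just proved, since the hypotheses \eqref{eq-initial-assumptions} control only $\varepsilon^2\|f_{\varepsilon}^0\|_{\infty}$ and not $\|f_{\varepsilon}^0\|_{\infty}$ itself, so the factor $F^{(l-k)/(l+2)}$ need not be uniformly bounded. Instead I would estimate $\widetilde{H}_{k,\varepsilon}(t)$ directly at the endpoints $k=0$ and $k=2$ and interpolate. For $k=0$, mass conservation gives $\widetilde{H}_{0,\varepsilon}(t)=\|f_{\varepsilon}^0\|_{1}\le K_0$. For $k=2$, expanding $h_{\varepsilon}$ gives $\widetilde{H}_{2,\varepsilon}(t)=\tfrac12\iint|v|^2f_{\varepsilon}\,\mathrm{d}x\,\mathrm{d}v+\sum_{\alpha}\int\big(|x-\xi_{\alpha,\varepsilon}|-\ln|x-\xi_{\alpha,\varepsilon}|\big)\rho_{\varepsilon}\,\mathrm{d}x+K\|f_{\varepsilon}^0\|_{1}$, and each term is $\le C$ by \eqref{eq-priori-1} (for the kinetic energy, and for $|x-\xi_{\alpha,\varepsilon}|\le|x|+|\xi_{\alpha,\varepsilon}|$ together with mass conservation) and by \eqref{eq-priori-3} (for the logarithmic term). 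Finally, for intermediate $0<k<2$, Hölder's inequality in $(x,v)$ with exponents $2/k$ and $2/(2-k)$ applied to $h_{\varepsilon}^{k/2}f_{\varepsilon}=(h_{\varepsilon}f_{\varepsilon})^{k/2}f_{\varepsilon}^{(2-k)/2}$ gives $\widetilde{H}_{k,\varepsilon}(t)\le\widetilde{H}_{2,\varepsilon}(t)^{k/2}\widetilde{H}_{0,\varepsilon}(t)^{(2-k)/2}\le C'$ with $C'$ depending only on $K_0,K_1$; taking the supremum over $s\in[0,t]$ then yields $H_{k,\varepsilon}(t)\le C'$ for all $0\le k\le 2$. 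The genuinely non-routine issue is precisely keeping this final constant independent of $\|f_{\varepsilon}^0\|_{\infty}$, which is what forces the moment-based route rather than a black-box application of the interpolation inequality.
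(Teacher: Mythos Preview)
Your proposal is correct and follows essentially the same approach as the paper: the paper does not give a proof at all but simply cites the kinetic interpolation of Golse--Saint-Raymond \cite[Lem.~3.1]{GS99}, and your splitting-and-optimizing argument is precisely that interpolation, adapted to the $x$-dependent weight $h_{\varepsilon}$ via the observation that $\{v:h_{\varepsilon}\le A\}$ is a disc of area at most $2\pi A$. Your treatment of the ``in particular'' clause---bounding $H_{0,\varepsilon}$ and $H_{2,\varepsilon}$ directly from Proposition~\ref{prn-priori} and then interpolating via H\"older, rather than invoking the main inequality itself---is the right route and your remark that the factor $\|f_{\varepsilon}^0\|_{\infty}^{(l-k)/(l+2)}$ would otherwise spoil uniformity is a valid concern that the paper leaves implicit.
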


For the sake of simplicity, we will use the following shorthand in the sequel of this section:
\begin{align*}
&(X_{\varepsilon}(s),V_{\varepsilon}(s))=(X_{\varepsilon},V_{\varepsilon})(s,0,x,v),\\
&\mathbf{h}_{\varepsilon}(s)=h_{\varepsilon}(s,X_{\varepsilon}(s),V_{\varepsilon}(s)),
\end{align*}
where $(x,v)$ belongs to the support of $f_{\varepsilon}^0$. We adopt the approach established recently in \cite{WZ23} to obtain the following proposition. The proof is in the appendix.

\begin{Proposition}\label{prn-estimate-Lk}
We define $k$-th order singular moment by
\begin{equation*}
L_{k,\varepsilon}(t):=\sum_{\alpha}\int_0^t\iint_{\mathbb{R}^2\times\mathbb{R}^2} \frac{\mathbf{h}_{\varepsilon}(s)^{k/2}f_{\varepsilon}^0}{\vert X_{\varepsilon}(s)-\xi_{\alpha,\varepsilon}(s)\vert}\,\mathrm{d}x\,\mathrm{d}v\,\mathrm{d}s.
\end{equation*}
Then  for all $0\le k<l$ we have
\begin{align*}
L_{k,\varepsilon}(t)&\le C(\varepsilon+\varepsilon^{-1}t) H_{k+1,\varepsilon}(t)+C\|f_{\varepsilon}^0\|_{\infty}^{\frac{l-k}{l+2}}H_{l,\varepsilon}(t)^{\frac{k+2}{l+2}}\int_0^t\|E_{\varepsilon}(s)\|_{\frac{l+2}{l-k}}\,\mathrm{d}s\\
&\qquad+CL_{k-1,\varepsilon}(t)+C\big(L_{0,\varepsilon}(t)+t\big)H_{k,\varepsilon}(t),
\end{align*}
where $C$ depends only on $N,K_0,K_1$ and $l,k$.
\end{Proposition}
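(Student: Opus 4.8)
The plan is to estimate the time integral of $\mathbf{h}_\varepsilon^{k/2}/|X_\varepsilon - \xi_{\alpha,\varepsilon}|$ along characteristics by exploiting the structure of the ODE for the point-charge velocity $\eta_{\alpha,\varepsilon}$, following the cancellation idea of \cite{WZ23}. The key observation is that the singular factor $1/|X_\varepsilon - \xi_{\alpha,\varepsilon}|$ is, up to a rotation by $\perp$, the modulus of the field $F_\varepsilon$ acting between a plasma particle at $X_\varepsilon$ and the $\alpha$-th charge; this same field governs $\dot\eta_{\alpha,\varepsilon}$. So the first step is to write
\[
\frac{1}{|X_\varepsilon(s)-\xi_{\alpha,\varepsilon}(s)|}
= \frac{(X_\varepsilon(s)-\xi_{\alpha,\varepsilon}(s))}{|X_\varepsilon(s)-\xi_{\alpha,\varepsilon}(s)|^2}\cdot \frac{X_\varepsilon(s)-\xi_{\alpha,\varepsilon}(s)}{|X_\varepsilon(s)-\xi_{\alpha,\varepsilon}(s)|},
\]
and recognize the first factor as the contribution of the charge to $E_\varepsilon + F_\varepsilon$-type fields at $X_\varepsilon$ (more precisely the Biot--Savart kernel $x/|x|^2$ evaluated at $X_\varepsilon - \xi_{\alpha,\varepsilon}$). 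I then pair this with the characteristic equations: $\frac{d}{ds}X_\varepsilon = V_\varepsilon/\varepsilon$ and the $\varepsilon$-weighted velocity equation, so that the singular kernel can be traded against a total derivative in $s$ of a quadratic-type quantity built from $V_\varepsilon$, $\xi_{\alpha,\varepsilon}$, $\eta_{\alpha,\varepsilon}$ and $\ln|X_\varepsilon - \xi_{\alpha,\varepsilon}|$, at the cost of lower-order singular moments $L_{k-1,\varepsilon}$ and of the non-singular fields $E_\varepsilon$.

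The second step is to carry out this integration by parts carefully while keeping the weight $\mathbf{h}_\varepsilon^{k/2}$. Differentiating $\mathbf{h}_\varepsilon(s)^{k/2}$ in $s$ produces: (i) the explicit time-dependence through $\dot\xi_{\alpha,\varepsilon}=\eta_{\alpha,\varepsilon}/\varepsilon$, which is again singular and feeds back into $L_{k-1,\varepsilon}$-type terms and into terms controlled by $H_{k,\varepsilon}$; (ii) the velocity part $v\cdot \dot V_\varepsilon$, where the magnetic term $V_\varepsilon^\perp/\varepsilon^2$ drops out by orthogonality and the remaining $(E_\varepsilon+F_\varepsilon)/\varepsilon$ term splits into a genuine electric-field contribution — estimated by Hölder with the kinetic interpolation Lemma~\ref{lem-interpo-Hk}, giving the $\|f_\varepsilon^0\|_\infty^{(l-k)/(l+2)} H_{l,\varepsilon}^{(k+2)/(l+2)}\int_0^t\|E_\varepsilon\|_{(l+2)/(l-k)}$ term — and a self-interaction $F_\varepsilon$ contribution that is again of singular-moment type. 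The boundary terms at $s=0$ and $s=t$ are bounded using the support condition on $f_\varepsilon^0$ (vacuum of radius $\sigma_\varepsilon$ around the charges, so $\ln|X_\varepsilon-\xi_{\alpha,\varepsilon}|$ is finite at $s=0$) and, more importantly, reorganized using $h_\varepsilon \gtrsim |v|^2$ and the $|x-\xi_{\alpha,\varepsilon}|-\ln|x-\xi_{\alpha,\varepsilon}|$ term inside $h_\varepsilon$ itself, so that the boundary contributions are absorbed into $C(\varepsilon + \varepsilon^{-1}t)H_{k+1,\varepsilon}(t)$. The factor $\varepsilon + \varepsilon^{-1}t$ reflects exactly the two time-derivative sources: $\dot X_\varepsilon \sim \varepsilon^{-1}$ (hence the $\varepsilon^{-1}t$) and the $\varepsilon$-weighting needed to make the velocity equation nonsingular (hence the solitary $\varepsilon$ from the $s=0,t$ boundary terms).

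The third step is bookkeeping of the charge-charge interactions: since $N>1$, when $X_\varepsilon$ is replaced by a charge position $\xi_{\beta,\varepsilon}$, the term $F_\varepsilon(\xi_{\beta,\varepsilon})$ only involves $\gamma\ne\beta$ and those distances are bounded below by $M$ from \eqref{eq-priori-positive-distance}; hence all cross-charge terms are harmless and contribute only to the $C(L_{0,\varepsilon}(t)+t)H_{k,\varepsilon}(t)$ and $Ct$ terms. One sums over $\alpha$, uses $N$-dependence of the constant, and collects. I expect the main obstacle to be the algebraic organization of the integration by parts so that the \emph{leading} singular terms genuinely cancel (this is the heart of the \cite{WZ23} trick: the derivative of a cleverly chosen quadratic functional reproduces $1/|X_\varepsilon-\xi_{\alpha,\varepsilon}|$ with the right sign up to terms that are either one order less singular, i.e. $L_{k-1,\varepsilon}$, or involve only the regular field $E_\varepsilon$), together with controlling the explicit $\dot\xi_{\alpha,\varepsilon}$-in-$h_\varepsilon$ contributions — these are the terms whose mishandling would produce an uncontrolled $\varepsilon^{-2}$ rather than $\varepsilon^{-1}$. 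Everything else — the Hölder step, invoking Lemma~\ref{lem-interpo-Hk}, the $H_{k,\varepsilon}\le C'$ bounds for $k\le 2$, and the boundary estimates via the support assumption — is routine once the cancellation structure is in place.
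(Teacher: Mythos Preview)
Your plan is in the right neighborhood --- integrating by parts in $s$ against the weight $\mathbf{h}_\varepsilon^{k/2}$, splitting off the regular electric field via H\"older and Lemma~\ref{lem-interpo-Hk}, and invoking \eqref{eq-priori-positive-distance} for charge--charge distances --- but the core mechanism you describe is not the one that works, and one step is genuinely missing.

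First, the singular factor $1/|X_\varepsilon-\xi_{\alpha,\varepsilon}|$ does \emph{not} come from the ODE for $\dot\eta_{\alpha,\varepsilon}$: the Coulomb kernel $\frac{X_\varepsilon-\xi_{\alpha,\varepsilon}}{|X_\varepsilon-\xi_{\alpha,\varepsilon}|^2}$ is a term in $\dot V_\varepsilon$ (via $F_\varepsilon(X_\varepsilon)$), whereas $\dot\eta_{\alpha,\varepsilon}$ only sees the \emph{integrated} field $E_\varepsilon(\xi_{\alpha,\varepsilon})$ and the charge--charge $F_\varepsilon(\xi_{\alpha,\varepsilon})$. A ``quadratic-type quantity built from $V_\varepsilon,\xi_{\alpha,\varepsilon},\eta_{\alpha,\varepsilon}$ and $\ln|X_\varepsilon-\xi_{\alpha,\varepsilon}|$'' is essentially the pointwise energy of \cite{CM10,WZ23}; its $s$-derivative \emph{kills} the leading singularity by cancellation --- useful for bounding $\mathbf{h}_\varepsilon$, but useless for \emph{producing} the singular moment you want on the left-hand side. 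The paper instead computes the \emph{second} derivative of the bare distance $|X_\varepsilon-\xi_{\alpha,\varepsilon}|$ (Lemma~\ref{lem-estimate-character}): in the relative acceleration $\dot V_\varepsilon-\dot\eta_{\alpha,\varepsilon}$ the Coulomb term survives and, after dotting with the unit radial vector, yields $\varepsilon^{-2}/|X_\varepsilon-\xi_{\alpha,\varepsilon}|$ with the good sign. Multiplying by $\varepsilon^2$ and integrating by parts \emph{once} in $s$ against $\mathbf{h}_\varepsilon^{k/2}$ then gives the boundary contribution $\varepsilon H_{k+1,\varepsilon}$ and a commutator controlled by $|\tfrac{d}{ds}\mathbf{h}_\varepsilon^{k/2}|$; this is where the \cite{WZ23} cancellation (Lemma~\ref{lem-estimate-pointwise-energy}) actually enters, to control the weight derivative rather than to generate the main term. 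The companion term $\varepsilon^{-1}|V_\varepsilon-\eta_{\alpha,\varepsilon}|$ in Lemma~\ref{lem-estimate-character} is what produces $\varepsilon^{-1}t\,H_{k+1,\varepsilon}$.

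Second, your bookkeeping in Step~3 only treats $F_\varepsilon(\xi_{\beta,\varepsilon})$, i.e.\ charge--charge interactions. You are missing the plasma--to--other--charge terms: the full $F_\varepsilon(X_\varepsilon)$ in $\dot V_\varepsilon$ contains $\sum_{\beta\ne\alpha}|X_\varepsilon-\xi_{\beta,\varepsilon}|^{-1}$, and nothing prevents $X_\varepsilon$ from being close to some $\xi_{\beta,\varepsilon}$ with $\beta\ne\alpha$. The paper resolves this by first splitting into the region $|X_\varepsilon-\xi_{\alpha,\varepsilon}|\le M/2$ (where, by \eqref{eq-priori-positive-distance}, $|X_\varepsilon-\xi_{\beta,\varepsilon}|\ge M/2$ for all $\beta\ne\alpha$, so those contributions are $O(N/M)$) and the far region $|X_\varepsilon-\xi_{\alpha,\varepsilon}|>M/2$ (where the original integrand is trivially $\le 2/M$). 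Without this localization the $\beta\ne\alpha$ terms would regenerate a full $L_{k,\varepsilon}$ on the right-hand side with coefficient $N-1$, and the inequality would not close.
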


Now we have the main result in the section. 
\begin{Corollary}\label{coro-key-L0}
For all $t\ge 0$, there holds
\begin{equation}\label{eq-coro-key-L0-1}
L_{0,\varepsilon}(t)\le C(\varepsilon^{-1}t+\varepsilon),
\end{equation}
where $C$ depends only on $N,K_0,K_1$.

In particular, there exists $t_0>0$ depending only on $N,K_0,K_1$ such that for all $0<t<t_0$, there holds
\begin{equation}\label{eq-coro-key-L0-2}
L_{0,\varepsilon}(t)\le Ct^{1/2}\varepsilon^{-1}.
\end{equation}
\end{Corollary}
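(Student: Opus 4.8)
The plan is to close an estimate on $L_{0,\varepsilon}(t)$ by applying Proposition~\ref{prn-estimate-Lk} with $k=0$ and an exponent $l\in(2,\infty)$ chosen so that the field term is controlled by the a~priori bounds already at hand. Take $k=0$ in Proposition~\ref{prn-estimate-Lk}. The term $CL_{k-1,\varepsilon}(t)=CL_{-1,\varepsilon}(t)$ must be interpreted: for $k=0$ the singular moment $L_{-1,\varepsilon}$ does not appear (or is absorbed), so the right-hand side reduces to
\begin{equation*}
L_{0,\varepsilon}(t)\le C(\varepsilon+\varepsilon^{-1}t)H_{1,\varepsilon}(t)+C\|f_{\varepsilon}^0\|_{\infty}^{\frac{l}{l+2}}H_{l,\varepsilon}(t)^{\frac{2}{l+2}}\int_0^t\|E_{\varepsilon}(s)\|_{\frac{l+2}{l}}\,\mathrm{d}s+C\big(L_{0,\varepsilon}(t)+t\big)H_{0,\varepsilon}(t).
\end{equation*}
By Lemma~\ref{lem-interpo-Hk} we have $H_{0,\varepsilon}(t),H_{1,\varepsilon}(t)\le C'$ with $C'$ depending only on $K_0,K_1$; the only potentially growing quantity is $H_{l,\varepsilon}(t)$ for $l>2$. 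The first obstacle is the last term: $C(L_{0,\varepsilon}(t)+t)H_{0,\varepsilon}(t)$ contains $L_{0,\varepsilon}(t)$ itself with a constant that is \emph{not} small, so one cannot simply absorb it. The remedy is the standard continuation/bootstrap argument on a short time interval: since $t\mapsto L_{0,\varepsilon}(t)$ is continuous, nondecreasing, and vanishes at $t=0$, one runs the estimate first on $[0,\tau]$ with $\tau$ small, uses that on such an interval $L_{0,\varepsilon}(\tau)$ is itself small, and iterates; more cleanly, one differentiates in $t$ (or uses the integral form directly) to get a Grönwall-type inequality $L_{0,\varepsilon}(t)\le C(\varepsilon+\varepsilon^{-1}t)+C\int_0^t L_{0,\varepsilon}(s)\,\mathrm{d}s\cdot(\text{bounded factor})$, from which Grönwall yields $L_{0,\varepsilon}(t)\le C(\varepsilon+\varepsilon^{-1}t)e^{Ct}$ and hence \eqref{eq-coro-key-L0-1} on $[0,T]$ with a constant depending on $T$ (absorbed into the convention for $C$).

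The second point to handle is the field integral $\int_0^t\|E_{\varepsilon}(s)\|_{\frac{l+2}{l}}\,\mathrm{d}s$ together with the factor $\|f_{\varepsilon}^0\|_{\infty}^{l/(l+2)}H_{l,\varepsilon}(t)^{2/(l+2)}$. Here I would \emph{not} try to control $H_{l,\varepsilon}$ by itself (that is where the hierarchy in Proposition~\ref{prn-estimate-Lk} would feed back); instead observe that $\frac{l+2}{l}\in(1,2)$, so Lemma~\ref{lem-field-rho-interpo} gives $\|E_{\varepsilon}(s)\|_{\frac{l+2}{l}}\le C\|\rho_\varepsilon(s)\|_1^{\theta}\|\rho_\varepsilon(s)\|_2^{1-\theta}$ for the appropriate $\theta$, and then \eqref{eq-Lp-conse}, \eqref{eq-priori-1}, \eqref{eq-priori-2} bound this by $C\|f_\varepsilon^0\|_\infty^{(1-\theta)/2}$, which tends to $0$ as $\varepsilon\to 0$ after multiplying by $\|f_\varepsilon^0\|_\infty^{l/(l+2)}$ only if we are careful — but in any case it is bounded. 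Combined with $H_{l,\varepsilon}(t)^{2/(l+2)}$: to avoid $H_{l,\varepsilon}$ entirely, simply choose $l$ close to $2$, say use Lemma~\ref{lem-interpo-Hk}'s assertion only for $k\le 2$; taking $l=2$ is borderline in Proposition~\ref{prn-estimate-Lk} (which needs $l>k$), so fix $l\in(2,3]$ and accept that $H_{l,\varepsilon}$ may not be uniformly bounded — but it is finite for each fixed $\varepsilon$, and the factor $\varepsilon^2\|f_\varepsilon^0\|_\infty\to 0$ from \eqref{eq-initial-assumptions} is exactly what tames $\|f_\varepsilon^0\|_\infty^{l/(l+2)}$ against any polynomial growth of $H_{l,\varepsilon}$ that can be extracted from the same Proposition at level $k=2$. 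In the cleanest write-up one picks $l$ so that $\frac{l+2}{l-0}=\frac{l+2}{l}$ pairs with a power of $\|f_\varepsilon^0\|_\infty$ large enough that, using $H_{l,\varepsilon}(t)\le C\varepsilon^{-a}$ type crude bounds, the whole field contribution is $O(\varepsilon^{-1}t+\varepsilon)$; this is the technical heart and the place where the smallness hypothesis $\lim_{\varepsilon\to0}\varepsilon^2\|f_\varepsilon^0\|_\infty=0$ is essential.

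Granting \eqref{eq-coro-key-L0-1}, the second statement \eqref{eq-coro-key-L0-2} is immediate: $L_{0,\varepsilon}(t)\le C(\varepsilon^{-1}t+\varepsilon)$, and for $t<t_0$ with $t_0$ small (depending only on $N,K_0,K_1$) one has $\varepsilon\le \varepsilon\cdot(t/\varepsilon^2)^{1/2}\cdot(\varepsilon^2/t)^{1/2}$ — more directly, restrict to $t\le t_0$ and note $\varepsilon\le t^{1/2}\varepsilon^{-1}$ whenever $\varepsilon^2\le t^{1/2}$; when instead $\varepsilon^2>t^{1/2}$, i.e. $t<\varepsilon^4$, one has $\varepsilon^{-1}t<\varepsilon^3<\varepsilon< t^{1/2}\varepsilon^{-1}$ fails in general, so the correct argument is: $\varepsilon^{-1}t=\varepsilon^{-1}t^{1/2}\cdot t^{1/2}\le \varepsilon^{-1}t_0^{1/2}\cdot t^{1/2}$ handles the first term, and $\varepsilon\le \varepsilon^{-1}t^{1/2}$ iff $\varepsilon^2\le t^{1/2}$, which may fail — hence one keeps the bound as $C(\varepsilon^{-1}t+\varepsilon)$ and uses that for the intended application only the regime $t\gtrsim\varepsilon^2$ matters, or observes $\varepsilon+\varepsilon^{-1}t\le 2\varepsilon^{-1}t^{1/2}\max(\varepsilon^{3/2},t^{1/2})$ is not quite it either; the honest statement is that on $0<t<t_0$ one bounds $\varepsilon\le \varepsilon^{-1}t^{1/2}$ in the range where $t\ge\varepsilon^4$ and absorbs the complementary range $t<\varepsilon^4$ using $L_{0,\varepsilon}(t)\le L_{0,\varepsilon}(\varepsilon^4)\le C\varepsilon\le C t^{1/2}\varepsilon^{-1}$ since then $t^{1/2}\varepsilon^{-1}\ge\varepsilon$. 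Either way this last step is elementary once \eqref{eq-coro-key-L0-1} is in hand, and the only genuine difficulty in the whole Corollary is closing the Grönwall loop while controlling the field term via the vanishing of $\varepsilon^2\|f_\varepsilon^0\|_\infty$.
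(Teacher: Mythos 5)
There is a genuine gap. Your plan applies Proposition~\ref{prn-estimate-Lk} directly with $k=0$ and then tries to absorb the self-referencing term, but that term is $C\big(L_{0,\varepsilon}(t)+t\big)H_{0,\varepsilon}(t)$ where $H_{0,\varepsilon}(t)=\|f_{\varepsilon}^0\|_1$ is an $O(1)$ constant, not a small one. The resulting inequality has the form $L_{0,\varepsilon}(t)\le A(t)+C\,L_{0,\varepsilon}(t)$ with $C$ not $<1$, which cannot be closed; crucially, it is \emph{not} of Gr\"onwall type, since the right-hand side contains $L_{0,\varepsilon}(t)$ itself rather than $\int_0^t L_{0,\varepsilon}(s)\,\mathrm{d}s$, so the ``differentiate and Gr\"onwall'' remedy you invoke does not apply. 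The paper never applies Proposition~\ref{prn-estimate-Lk} at level $k=0$. Instead it first splits the phase space by the free parameters $R,\sigma$ into $A_1=\{|X_\varepsilon-\xi_{\alpha,\varepsilon}|>\sigma\}$, $A_2=\{\mathbf{h}_\varepsilon>R\}$, $A_3=\{|X_\varepsilon-\xi_{\alpha,\varepsilon}|\le\sigma,\ \mathbf{h}_\varepsilon\le R\}$. On $A_1$ one gets $\sigma^{-1}\|f_\varepsilon^0\|_1 t$; on $A_3$ the measure-preservation and the $L^\infty$ bound give the elementary Fubini estimate $C\|f_\varepsilon^0\|_\infty R\sigma t$; and, decisively, on $A_2$ one trades a factor $R^{-1/2}$ for a half power of $\mathbf{h}_\varepsilon$, so that region contributes $R^{-1/2}L_{1,\varepsilon}(t)$. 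It is only \emph{then} that Proposition~\ref{prn-estimate-Lk} is applied, and at level $k=1,\ l=2$, where all of $H_{1,\varepsilon},H_{2,\varepsilon}$ are uniformly bounded by Lemma~\ref{lem-interpo-Hk} and $\|E_\varepsilon\|_4$ is controlled by Lemma~\ref{lem-field-rho-interpo}. The $L_{0,\varepsilon}$ terms coming out of that proposition then enter with the adjustable prefactor $NR^{-1/2}$, which can be made $\le 1/2$ by choosing $R$ large; that is exactly how the absorption is achieved. Your ``choose $l$ close to $2$ and bound $H_{l,\varepsilon}$ crudely using $\varepsilon^2\|f_\varepsilon^0\|_\infty\to 0$'' paragraph never resolves this, because the obstruction is the non-small coefficient of $L_{0,\varepsilon}$, not the field term.

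There is also a second, smaller but genuine error: the estimate \eqref{eq-coro-key-L0-2} does \emph{not} follow from \eqref{eq-coro-key-L0-1}. When $t<\varepsilon^4$ one has $t^{1/2}\varepsilon^{-1}<\varepsilon$, so the bound $C(\varepsilon^{-1}t+\varepsilon)\ge C\varepsilon$ from \eqref{eq-coro-key-L0-1} is strictly larger than $Ct^{1/2}\varepsilon^{-1}$; your claim that ``$t^{1/2}\varepsilon^{-1}\ge\varepsilon$ since $t<\varepsilon^4$'' has the inequality backwards. In the paper, \eqref{eq-coro-key-L0-1} and \eqref{eq-coro-key-L0-2} are both obtained by the same domain decomposition but with \emph{different} parameter choices: $\sigma=\varepsilon,\ R=\max\{(2N\tilde C)^2,1\}$ for the first, and $\sigma=\varepsilon t^{1/2},\ R=\max\{(2N\tilde C)^2,t^{-1}\}$ for the second. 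So \eqref{eq-coro-key-L0-2} needs its own run of the argument, not a post-processing of \eqref{eq-coro-key-L0-1}.
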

\begin{proof}
Let $R,\sigma>0$ to be determined later, we split $\mathbb{R}^2\times\mathbb{R}^2$ into three parts $A_1,A_2,A_3$ as
\begin{align*}
A_1:=&\{(x,v):\vert X_{\varepsilon}(s)-\xi_{\alpha,\varepsilon}(s)\vert >\sigma\},\\
A_2:=&\{(x,v):\mathbf{h}_{\varepsilon}(s)>R\},\\
A_3:=&\{(x,v):\vert X_{\varepsilon}(s)-\xi_{\alpha,\varepsilon}(s)\vert \le\sigma,\mathbf{h}_{\varepsilon}(s)\le R\}.
\end{align*}
Then we have
\begin{align*}
&\int_0^t\iint_{\mathbb{R}^2\times\mathbb{R}^2} \frac{f_{\varepsilon}^0(x,v)}{\vert X_{\varepsilon}(s)-\xi_{\alpha,\varepsilon}(s)\vert}\,\mathrm{d}x\,\mathrm{d}v\,\mathrm{d}s\\
&=\int_0^t\iint_{A_1}+\int_0^t\iint_{A_2}+\int_0^t\iint_{A_3}\frac{f_{\varepsilon}^0(x,v)}{\vert X_{\varepsilon}(s)-\xi_{\alpha,\varepsilon}(s)\vert}\,\mathrm{d}x\,\mathrm{d}v\,\mathrm{d}s\\
&\le \sigma^{-1}\|f_{\varepsilon}^0\|_1t+R^{-1/2}\int_0^t\iint_{\mathbb{R}^2\times\mathbb{R}^2} \frac{\mathbf{h}_{\varepsilon}(s)^{1/2}f_{\varepsilon}^0(x,v)}{\vert X_{\varepsilon}(s)-\xi_{\alpha,\varepsilon}(s)\vert}\,\mathrm{d}x\,\mathrm{d}v\,\mathrm{d}s\\
&\quad+\int_0^t\iint_{\vert x-\xi_{\alpha,\varepsilon}(s)\vert \le\sigma,h_{\varepsilon}(s,x,v)\le R} \frac{f_{\varepsilon}(s,x,v)}{\vert x-\xi_{\alpha,\varepsilon}(s)\vert}\,\mathrm{d}x\,\mathrm{d}v\,\mathrm{d}s,
\end{align*}
Since $\vert v\vert \le 2\sqrt{h}_{\varepsilon}(s,x,v)$ and the $L^p$-norms are conserved as \eqref{eq-Lp-conse}, we have
\begin{align*}
&\int_0^t\iint_{\vert x-\xi_{\alpha,\varepsilon}(s)\vert \le\sigma,h_{\varepsilon}(s,x,v)\le R} \frac{f_{\varepsilon}(s,x,v)}{\vert x-\xi_{\alpha,\varepsilon}(s)\vert}\,\mathrm{d}x\,\mathrm{d}v\,\mathrm{d}s\\
&\le \|f_{\varepsilon}^0\|_{\infty}\int_0^t\iint_{\vert x-\xi_{\alpha,\varepsilon}(s)\vert \le\sigma,\vert v\vert \le 2\sqrt{R}} \frac{1}{\vert x-\xi_{\alpha,\varepsilon}(s)\vert}\,\mathrm{d}x\,\mathrm{d}v\,\mathrm{d}s\\
&\le 8\pi^2\|f_{\varepsilon}^0\|_{\infty}R\sigma t.
\end{align*}
Hence we have
\begin{equation}\label{eq-2.10}
L_{0,\varepsilon}(t)\le N\sigma^{-1}t+NR^{-1/2}L_{1,\varepsilon}(t)+8\pi^2N\|f_{\varepsilon}^0\|_{\infty}R\sigma t.
\end{equation}
Take $k=1$, $l=2$ in Proposition~\ref{prn-estimate-Lk}, we have
\begin{align*}
L_{1,\varepsilon}(t)&\le C\Big((\varepsilon+t\varepsilon^{-1}) H_{2,\varepsilon}(t)+\|f_{\varepsilon}^0\|_{\infty}^{\frac{1}{4}}H_{2,\varepsilon}(t)^{\frac{3}{4}}\int_0^t\|E_{\varepsilon}(s)\|_{4}\,\mathrm{d}s\nonumber\\
&\qquad+L_{0,\varepsilon}(t)+\big(L_{0,\varepsilon}(t)+t\big)H_{1,\varepsilon}(t)\Big).
\end{align*}
By the weak Young's inequality, the interpolation inequality in $L^p$  and \eqref{eq-priori-2}, we have
\begin{align*}
\|E_{\varepsilon}(s)\|_{4}\le C\|\rho_{\varepsilon}(s)\|_1^{\frac{1}{2}}\|\rho_{\varepsilon}(s)\|_2^{\frac{1}{2}}\le C\|f_{\varepsilon}^{0}\|_{\infty}^{\frac{1}{4}},
\end{align*}
combining with $H_{2,\varepsilon}(t)\le C$ by Lemma~\ref{lem-interpo-Hk}, we obtain
\begin{align}\label{eq-2.11}
L_{1,\varepsilon}(t)\le\tilde{C}\Big(t\varepsilon^{-1}+\varepsilon+L_{0,\varepsilon}(t)+\|f_{\varepsilon}^0\|_{\infty}^{\frac{1}{2}}t\Big).
\end{align}
for some $\tilde{C}>1$.

Now take $R=\max\{(2N\tilde{C})^2,t^{-1}\}$ and insert \eqref{eq-2.11} into \eqref{eq-2.10}, we have
\begin{align*}
L_{0,\varepsilon}(t)&\le N\sigma^{-1}t+\min\{1/2,\tilde{C}Nt^{1/2}\}\Big(t\varepsilon^{-1}+\varepsilon+L_{0,\varepsilon}(t)+\|f_{\varepsilon}^0\|_{\infty}^{\frac{1}{2}}t\Big)\\
&\quad+8\pi^2N\|f_{\varepsilon}^0\|_{\infty}\max\{(2N\tilde{C})^2,t^{-1}\}\sigma t.
\end{align*}
Take $\sigma=\varepsilon t^{1/2}$ and recall the fact $\|f_{\varepsilon}^0\|_{\infty}\le C\varepsilon^{-2}$ by the assumptions \eqref{eq-initial-assumptions},  we obtain
\begin{align*}
L_{0,\varepsilon}(t)&\le 2N\varepsilon^{-1}t^{1/2}+\min\{1,2\tilde{C}Nt^{1/2}\}\Big(\varepsilon+(C+1)\varepsilon^{-1}t\Big)\\
&\quad+16\pi^2NC\varepsilon^{-1}\max\{(2N\tilde{C})^2,t^{-1}\}t^{3/2}.
\end{align*}
Let $t_0=(2N\tilde{C})^{-2}$, we obtain \eqref{eq-coro-key-L0-2}.

Similarly, take $R=\max\{(2N\tilde{C})^2,1\}$,  $\sigma=\varepsilon$ and insert \eqref{eq-2.11} into \eqref{eq-2.10}, we obtain \eqref{eq-coro-key-L0-1}.
\end{proof}

\section{Time regularity} \label{sec-time-regularity}

In this section, we obtain the time equi-continuity of $\rho_{\varepsilon}$ and $\xi_{\alpha,\varepsilon}$, which will be used in the compactness argument in the next section.

First of all, according to the ODE of $\eta_{\alpha,\varepsilon}$ in \eqref{eq-VP-point}, a consequence of \eqref{eq-coro-key-L0-2} combing with the boundedness of $\eta_{\alpha,\varepsilon}$ by \eqref{eq-priori-1} is that for all $0<s<t<T$
\begin{equation}\label{eq-eta-continuity}
\vert\eta_{\alpha,\varepsilon}(t)-\eta_{\alpha,\varepsilon}(s)\vert\le C(t-s)^{1/2}\varepsilon^{-2}.
\end{equation}

\subsection{Weak formulation}

As in \cite{GS99,Mio19}, we reexpress the system \eqref{eq-VP-point} using a weak formulation in order to study the asymptotical behavior as $\varepsilon\to 0$.

\begin{Lemma}\label{lem-weak-form-rho}
$\varepsilon\int vf_{\varepsilon}\,\mathrm{d}v$ is bounded in $C^{1/4}(\mathbb{R}_+,W^{-1,1}(\mathbb{R}^2))$ and the following equation holds in the distributional sense:
\begin{equation*}
\partial_t\rho_{\varepsilon}+\nabla_x\cdot\Big((E_{\varepsilon}+F_{\varepsilon})^{\perp}\rho_{\varepsilon}\Big)
=\nabla_x\cdot\Big(\nabla_x \cdot\int_{\mathbb{R}^2}v \otimes v f_{\varepsilon}\,\mathrm{d}v\Big)^{\perp}
+\varepsilon\partial_t\nabla_x\cdot\int_{\mathbb{R}^2}v^{\perp}f_{\varepsilon}\,\mathrm{d}v.
\end{equation*}
\end{Lemma}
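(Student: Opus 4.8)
The plan is to derive the stated PDE directly from the Vlasov equation in \eqref{eq-VP-point} and then read off the regularity of the current $j_\varepsilon := \int_{\mathbb{R}^2} v f_\varepsilon\,\mathrm{d}v$ from uniform moment bounds. First I would take the Vlasov equation, multiply by a test function $\Phi(t,x)\in C_c^\infty(\mathbb{R}_+\times\mathbb{R}^2)$ (no $v$-dependence), and integrate in $x,v$. The $\partial_t f_\varepsilon$ term gives $-\int \partial_t\Phi\,\rho_\varepsilon$ after integration by parts in $t$; the transport term $\frac{v}{\varepsilon}\cdot\nabla_x f_\varepsilon$ integrates against $\Phi$ to give $-\frac1\varepsilon\int \nabla_x\Phi\cdot j_\varepsilon$; and the force term $\big(\frac{v^\perp}{\varepsilon^2}+\frac{E_\varepsilon+F_\varepsilon}{\varepsilon}\big)\cdot\nabla_v f_\varepsilon$ integrates (after $v$-integration by parts, with no boundary terms since $f_\varepsilon$ has compact support in $v$) to $0$ because $\nabla_v\Phi=0$ — so that term drops entirely at this level. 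This is not yet the claimed equation: the point is that the raw equation reads $\partial_t\rho_\varepsilon + \frac1\varepsilon\nabla_x\cdot j_\varepsilon = 0$, and one must extract the announced form by using the $v$-component of the Vlasov equation (i.e.\ multiply by $v$ and integrate) to re-express $\frac1\varepsilon \nabla_x\cdot j_\varepsilon$.

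The second step is the key manipulation: test the Vlasov equation against $v^\perp\,\Psi(t,x)$ — equivalently, take the first velocity moment. Multiplying \eqref{eq-VP-point} by $v$ and integrating in $v$ yields, schematically,
\begin{equation*}
\partial_t\!\int v f_\varepsilon\,\mathrm{d}v + \frac1\varepsilon\nabla_x\cdot\!\int v\otimes v f_\varepsilon\,\mathrm{d}v - \frac1{\varepsilon^2}\Big(\!\int v f_\varepsilon\,\mathrm{d}v\Big)^{\!\perp} - \frac{E_\varepsilon+F_\varepsilon}{\varepsilon}\rho_\varepsilon = 0,
\end{equation*}
where the $v^\perp/\varepsilon^2$ force term produces $-\frac1{\varepsilon^2}j_\varepsilon^\perp$ (using $\int v\otimes v^\perp\,\nabla_v f_\varepsilon$-type integration by parts, i.e.\ $\int v (v^\perp\cdot\nabla_v f)\,\mathrm{d}v = -\int v^\perp f\,\mathrm{d}v = -j_\varepsilon^\perp$ since the matrix $\partial_{v_j}(v_i v_k^\perp)$ traces correctly), and the $(E_\varepsilon+F_\varepsilon)/\varepsilon$ term gives $\frac1\varepsilon(E_\varepsilon+F_\varepsilon)\rho_\varepsilon$. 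Solving this identity for $\frac1{\varepsilon}j_\varepsilon^\perp$ gives $\frac1\varepsilon j_\varepsilon^\perp = (E_\varepsilon+F_\varepsilon)\rho_\varepsilon + \varepsilon\,\partial_t j_\varepsilon + \nabla_x\cdot\int v\otimes v f_\varepsilon\,\mathrm{d}v$, hence $\frac1\varepsilon j_\varepsilon = -\big[(E_\varepsilon+F_\varepsilon)\rho_\varepsilon\big]^\perp - \varepsilon\,\partial_t j_\varepsilon^\perp - \big(\nabla_x\cdot\int v\otimes v f_\varepsilon\,\mathrm{d}v\big)^\perp$ (using $(w^\perp)^\perp=-w$). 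Substituting this into $\partial_t\rho_\varepsilon = -\frac1\varepsilon\nabla_x\cdot j_\varepsilon$ and noting $\nabla_x\cdot(w^\perp)=-(\nabla_x\cdot w)^\perp$ doesn't literally apply to scalars — rather one writes $\nabla_x\cdot(A^\perp) $ for a vector $A$ as a rotation — produces exactly the stated equation, with the sign on the last term matching because of the double rotation. I would carry out the rotation bookkeeping carefully with the convention $(v_1,v_2)^\perp=(-v_2,v_1)$; this index chase is the part most prone to sign errors but is otherwise routine.

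For the regularity claim, $j_\varepsilon$ is bounded in $L^\infty(\mathbb{R}_+, L^1(\mathbb{R}^2))$: by Cauchy–Schwarz in $v$ and the kinetic interpolation, $\int|j_\varepsilon|\,\mathrm{d}x \le \iint |v| f_\varepsilon \le \big(\iint f_\varepsilon\big)^{1/2}\big(\iint|v|^2 f_\varepsilon\big)^{1/2}$, which is uniformly bounded by \eqref{eq-priori-1} and \eqref{eq-Lp-conse}. To get the $C^{1/4}$ Hölder bound in $W^{-1,1}$, I would pair $\partial_t j_\varepsilon$ against $\nabla$-bounded test functions: from the first-moment identity, $\partial_t j_\varepsilon = \frac1{\varepsilon^2}j_\varepsilon^\perp + \frac1\varepsilon(E_\varepsilon+F_\varepsilon)\rho_\varepsilon - \frac1\varepsilon\nabla_x\cdot\int v\otimes v f_\varepsilon\,\mathrm{d}v$ — but this has bad $\varepsilon$-powers, so instead I test $j_\varepsilon(t)-j_\varepsilon(s)$ directly. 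The cleaner route: for $\Phi\in W^{1,\infty}$, $\langle j_\varepsilon(t)-j_\varepsilon(s),\Phi\rangle = \int_s^t \frac{d}{d\tau}\langle j_\varepsilon,\Phi\rangle\,\mathrm{d}\tau$, and the troublesome $\frac1{\varepsilon^2}\langle j_\varepsilon^\perp,\Phi\rangle$ term is re-expressed using $\frac1\varepsilon j_\varepsilon = -[(E_\varepsilon+F_\varepsilon)\rho_\varepsilon]^\perp - \varepsilon\partial_t j_\varepsilon^\perp - (\nabla_x\cdot\int v\otimes v f_\varepsilon)^\perp$ from the previous step, so the $\varepsilon^{-2}$ cancels against one $\varepsilon$ and, modulo an $\varepsilon\partial_t$ term that is handled by moving it outside the integral, everything reduces to time integrals of quantities bounded in $L^1_x$ uniformly in $\varepsilon$ — namely $(E_\varepsilon+F_\varepsilon)\rho_\varepsilon$ (where $E_\varepsilon\rho_\varepsilon\in L^1$ by Lemma~\ref{lem-field-rho-interpo} and \eqref{eq-priori-2}, and $F_\varepsilon\rho_\varepsilon\in L^1$ via \eqref{eq-priori-3} together with the estimate on the singular field near the charges from Corollary~\ref{coro-key-L0}) and $\int v\otimes v f_\varepsilon\,\mathrm{d}v\in L^\infty_t L^1_x$ by the second moment bound. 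The main obstacle I anticipate is precisely controlling $\int_s^t\int F_\varepsilon\rho_\varepsilon\,\mathrm{d}x\,\mathrm{d}\tau$ uniformly — the singular field of the point charges acting on the plasma density — which is exactly where the Corollary~\ref{coro-key-L0} bound $L_{0,\varepsilon}(t)\le C(\varepsilon^{-1}t+\varepsilon)$ (or its $t^{1/2}\varepsilon^{-1}$ refinement) enters and, after the $\varepsilon^{-1}$-to-$\varepsilon$ conversion above, produces the $(t-s)^{1/2}$ or ultimately $(t-s)^{1/4}$ modulus of continuity claimed. Tracking which exponent one actually gets (the statement asserts $1/4$) requires optimizing the split between the $\varepsilon\partial_t$ term and the rest, just as in the proof of Corollary~\ref{coro-key-L0}.
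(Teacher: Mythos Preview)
Your derivation of the PDE is correct and matches the paper: take the zeroth and first velocity moments of the Vlasov equation to get $\partial_t\rho_\varepsilon+\varepsilon^{-1}\nabla_x\cdot j_\varepsilon=0$ and $\varepsilon\partial_t j_\varepsilon+\nabla_x\cdot\!\int v\otimes v f_\varepsilon-(E_\varepsilon+F_\varepsilon)\rho_\varepsilon-\varepsilon^{-1}j_\varepsilon^\perp=0$, then eliminate $\varepsilon^{-1}j_\varepsilon$ between them.

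The regularity argument, however, has a gap. Your ``back-substitution'' for the troublesome $\varepsilon^{-2}j_\varepsilon^\perp$ term is circular: rotating the momentum identity and plugging it back into itself returns a tautology and does not remove the bad $\varepsilon$-power. (Try it: after substitution every term cancels and you are left with $\partial_t j_\varepsilon=\partial_t j_\varepsilon$.) The paper does something simpler and more direct. From the momentum equation one reads off, for $t-s<t_0$,
\[
\varepsilon\bigl\|j_\varepsilon(t)-j_\varepsilon(s)\bigr\|_{W^{-1,1}}
\le C(t-s)\Bigl(\varepsilon^{-1}\!\iint(1+|v|^2)f_\varepsilon+\|\widetilde E_\varepsilon\|_2\|\rho_\varepsilon\|_2+\|\bar E_\varepsilon\|_\infty\|\rho_\varepsilon\|_1\Bigr)+C\varepsilon^{-1}(t-s)^{1/2},
\]
where the last term is the contribution of $\int_s^t\!\int F_\varepsilon\rho_\varepsilon$ via Corollary~\ref{coro-key-L0} (exactly as you anticipated), and the $\varepsilon^{-1}j_\varepsilon^\perp$ term is simply bounded by $\varepsilon^{-1}\|j_\varepsilon\|_{L^1}$ without any re-expression. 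This gives $\varepsilon\|j_\varepsilon(t)-j_\varepsilon(s)\|_{W^{-1,1}}\le C\varepsilon^{-1}(t-s)^{1/2}$. The $1/4$ exponent then comes not from an optimization in the style of Corollary~\ref{coro-key-L0}, but from interpolating against the trivial bound you already wrote down, $\varepsilon\|j_\varepsilon(t)-j_\varepsilon(s)\|_{W^{-1,1}}\le C\varepsilon$: taking the geometric mean of $C\varepsilon^{-1}(t-s)^{1/2}$ and $C\varepsilon$ yields $C(t-s)^{1/4}$.

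A minor point: the paper controls $E_\varepsilon\rho_\varepsilon$ through the decomposition $E_\varepsilon=\widetilde E_\varepsilon+\bar E_\varepsilon$ of Section~\ref{subsec-LH-1-bound-rho} (giving $\|\widetilde E_\varepsilon\|_2\|\rho_\varepsilon\|_2+\|\bar E_\varepsilon\|_\infty\|\rho_\varepsilon\|_1$) rather than the weak Young inequality of Lemma~\ref{lem-field-rho-interpo}; either route produces an $O(\varepsilon^{-1})$ bound here, which is all that is needed.
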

\begin{proof}
From Corollary~\ref{coro-key-L0}, $F_{\varepsilon}\rho_{\varepsilon}\in L_{t,x,v}^1$ for each $\varepsilon>0$. Hence testing \eqref{eq-VP-point} and applying the dominated convergence theorem, we can obtain that the following equations hold in the distributional sense (see \cite[Lem.3.2]{GS99} for details):
\begin{align*}
&\partial_t\rho_{\varepsilon}+\varepsilon^{-1}\nabla_x\cdot\int vf_{\varepsilon}\,\mathrm{d}v=0,\label{eq-}\\
&\varepsilon\partial_t\int vf_{\varepsilon}\,\mathrm{d}v+\nabla_x\cdot\int_{\mathbb{R}^2}v \otimes v f_{\varepsilon}\,\mathrm{d}v-(E_{\varepsilon}+F_{\varepsilon})\rho_{\varepsilon}-\varepsilon^{-1}\int v^{\perp}f_{\varepsilon}\,\mathrm{d}v=0,
\end{align*}
which implies the equation in the lemma. Moreover, the second equation implies: for all $0\le s<t<T$ with $t-s<t_0$
\begin{align*}
&\varepsilon\left\|\int vf_{\varepsilon}(t)\,\mathrm{d}v-\int vf_{\varepsilon}(s)\,\mathrm{d}v\right\|_{W^{-1,1}(\mathbb{R}^2)}\\
&\le C(t-s)\sup_{\tau\ge 0}\left[\varepsilon^{-1}\iint(1+\vert v\vert^2)f_{\varepsilon}(\tau)\,\mathrm{d}v\,\mathrm{d}x+\|\widetilde{E}_{\varepsilon}(\tau)\|_{2}\|\rho_{\varepsilon}(\tau)\|_{2}+\|\bar{E}_{\varepsilon}(\tau)\|_{\infty}\|\rho_{\varepsilon}(\tau)\|_{1}\right]\\
&\quad+C\varepsilon^{-1}(t-s)^{1/2}\\
&\le C\varepsilon^{-1}(t-s)^{1/2}.
\end{align*}
where we have used \eqref{eq-coro-key-L0-2} and the bounds in Sec.~\ref{subsec-LH-1-bound-rho}. On the other hand
\begin{equation*}
\varepsilon\left\|\int vf_{\varepsilon}(t)\,\mathrm{d}v-\int vf_{\varepsilon}(s)\,\mathrm{d}v\right\|_{W^{-1,1}(\mathbb{R}^2)}\le C\varepsilon\sup_{\tau\ge 0}\iint\vert v\vert f_{\varepsilon}(\tau)\,\mathrm{d}v\,\mathrm{d}x\le C\varepsilon.
\end{equation*}
Hence
\begin{equation*}
\varepsilon\left\|\int vf_{\varepsilon}(t)\,\mathrm{d}v-\int vf_{\varepsilon}(s)\,\mathrm{d}v\right\|_{W^{-1,1}(\mathbb{R}^2)}\le C(t-s)^{1/4}.
\end{equation*}
\end{proof}

Now we further organize the weak formulation in Lemma~\ref{lem-weak-form-rho} using the symmetric quadratic form $\mathcal{H}_{\Phi}$, which allows us to handle the singular term $F_{\varepsilon}^{\perp}\rho_{\varepsilon}$. The proof is analogous to \cite[Prn.2.10]{Mio19}, but with simplification and improved estimate of the remainder term.
\begin{Proposition}\label{prn-weak-form}
Let $\Phi\in C_{c}^{\infty}(\mathbb{R}_{+}\times\mathbb{R}^2)$, for all $t\ge 0$ we have
\begin{align*}
&\int_{\mathbb{R}^2}\Phi(t,x)\big(\rho_{\varepsilon}(t,x)\,\mathrm{d}x+\bar{\delta}_{\varepsilon}(t,\mathrm{d}x)\big)-\int_{\mathbb{R}^2}\Phi(0,x)\big(\rho_{\varepsilon}(0,x)\,\mathrm{d}x+\bar{\delta}_{\varepsilon}(0,\mathrm{d}x)\big)\\
&=\int_0^t\int_{\mathbb{R}^2}\partial_t\Phi(s,x)\big(\rho_{\varepsilon}(s,x)\,\mathrm{d}x+\bar{\delta}_{\varepsilon}(s,\mathrm{d}x)\big)\,\mathrm{d}s+\int_0^t\mathcal{H}_{\Phi(s,\cdot)}[\rho_{\varepsilon}(s,\cdot)+\bar{\delta}_{\varepsilon}(s,\cdot),\rho_{\varepsilon}(s,\cdot)+\bar{\delta}_{\varepsilon}(s,\cdot)]\,\mathrm{d}s\\
&\quad+\int_0^t\int_{\mathbb{R}^2}\nabla_x^2\Phi(s,x):\int_{\mathbb{R}^2}v^{\perp}\otimes vf_{\varepsilon}(s,x,v)\,\mathrm{d}v\,\mathrm{d}s\,\mathrm{d}x\\
&\quad+\sum_{\alpha}\int_0^t\nabla_x^2\Phi(s,\xi_{\alpha,\varepsilon}(s)):\eta_{\alpha,\varepsilon}^{\perp}\otimes\eta_{\alpha,\varepsilon}(s)\,\mathrm{d}s+R_{\varepsilon}(t)
\end{align*}
where $\bar{\delta}_{\varepsilon}(t,x)=\sum_{\alpha}\delta_{\xi_{\alpha,\varepsilon}(t)}(x)$. The reminder $R_{\varepsilon}$ satisfies the following estimate: for all $0\le s<t\le T$
\begin{equation}\label{eq-R-varepsilon-est-2}
\vert R_{\varepsilon}(t)-R_{\varepsilon}(s)\vert\le C\sum_{i=0,1}\|\partial_t^i\nabla_x\Phi\|_{L^{\infty}(\mathbb{R}_{+}\times\mathbb{R}^2)}\min\{\varepsilon,(t-s)^{1/4}\}.
\end{equation}
\end{Proposition}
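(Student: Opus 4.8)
The plan is to derive the weak formulation by testing the kinetic equation \eqref{eq-VP-point} against $\Phi$, together with the ODE system for the point charges, and to collect all the terms that are not already in the target form into the remainder $R_\varepsilon$. First I would multiply the Vlasov equation by $\Phi(s,x)$ and integrate in $x,v,s$; the transport term $\varepsilon^{-1}v\cdot\nabla_x f_\varepsilon$ produces, after one integration by parts in $s$ of the first moment (using the continuity equation $\partial_t\rho_\varepsilon + \varepsilon^{-1}\nabla_x\cdot\int vf_\varepsilon\,\mathrm dv=0$ and the momentum balance from Lemma~\ref{lem-weak-form-rho}), the field term $\nabla_x\cdot((E_\varepsilon+F_\varepsilon)^\perp\rho_\varepsilon)$, the second-moment stress term $\nabla_x^2\Phi:\int v^\perp\otimes v f_\varepsilon\,\mathrm dv$, and an $O(\varepsilon)$ term of the form $\varepsilon\,\partial_t\nabla_x\Phi\cdot\int v^\perp f_\varepsilon\,\mathrm dv$, exactly as recorded in Lemma~\ref{lem-weak-form-rho}. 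In parallel, testing the charge ODEs: from $\dot\xi_{\alpha,\varepsilon}=\eta_{\alpha,\varepsilon}/\varepsilon$ and $\dot\eta_{\alpha,\varepsilon}=\varepsilon^{-2}\eta_{\alpha,\varepsilon}^\perp+\varepsilon^{-1}(E_\varepsilon+F_\varepsilon)(\xi_{\alpha,\varepsilon})$, the time derivative of $\Phi(s,\xi_{\alpha,\varepsilon}(s))$ expands as $\partial_t\Phi + \varepsilon^{-1}\nabla_x\Phi\cdot\eta_{\alpha,\varepsilon}$; differentiating once more and using the $\eta$-equation converts the singular $\varepsilon^{-1}$ contributions into the drift $\nabla\Phi(\xi_{\alpha,\varepsilon})\cdot(E_\varepsilon+F_\varepsilon)(\xi_{\alpha,\varepsilon})^\perp$ plus the Hessian term $\nabla_x^2\Phi(\xi_{\alpha,\varepsilon}):\eta_{\alpha,\varepsilon}^\perp\otimes\eta_{\alpha,\varepsilon}$ plus an $O(\varepsilon)$ boundary term.

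The next step is to recognize that the field contributions combine into the symmetric quadratic form $\mathcal H_\Phi$. The terms $\nabla\Phi\cdot E_{\rho_\varepsilon+\bar\delta_\varepsilon}^\perp(\rho_\varepsilon+\bar\delta_\varepsilon)$ — obtained by summing the $E_\varepsilon\rho_\varepsilon$, $F_\varepsilon\rho_\varepsilon$, $E_\varepsilon(\xi_{\alpha,\varepsilon})$-at-charges and mutual-charge-interaction pieces — symmetrize, using the antisymmetry of $(x-y)^\perp/|x-y|^2$, into $\mathcal H_{\Phi(s)}[\rho_\varepsilon(s)+\bar\delta_\varepsilon(s),\rho_\varepsilon(s)+\bar\delta_\varepsilon(s)]$; this is the standard computation behind \eqref{eq-symmetric-form}, and it requires the integrability of $F_\varepsilon\rho_\varepsilon$ (Corollary~\ref{coro-key-L0}) and of the logarithmic interactions (Proposition~\ref{prn-priori}) to justify all manipulations. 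The self-interaction diagonal terms of each charge are excluded precisely because $H_\Phi(x,x)=0$ and the renormalized $F_\varepsilon(\xi_{\alpha,\varepsilon})$ already omits the $\beta=\alpha$ term. Everything left over is declared to be $R_\varepsilon(t)$: explicitly $R_\varepsilon(t)=\varepsilon\int_{\mathbb R^2}\nabla_x\Phi(t,x)\cdot\int v^\perp f_\varepsilon(t)\,\mathrm dv\,\mathrm dx-\varepsilon\int_{\mathbb R^2}\nabla_x\Phi(0,x)\cdot\int v^\perp f_\varepsilon^0\,\mathrm dv\,\mathrm dx -\int_0^t\varepsilon\int\partial_t\nabla_x\Phi(s,x)\cdot\int v^\perp f_\varepsilon\,\mathrm dv\,\mathrm dx\,\mathrm ds$ plus the analogous $\varepsilon$-size charge boundary terms $\varepsilon\sum_\alpha[\nabla_x\Phi(t,\xi_{\alpha,\varepsilon}(t))\cdot\eta_{\alpha,\varepsilon}^\perp(t)-\cdots]$.

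For the estimate \eqref{eq-R-varepsilon-est-2}, I would bound $R_\varepsilon(t)-R_\varepsilon(s)$ two ways. For the $\varepsilon$ bound: each term in $R_\varepsilon$ carries a prefactor $\varepsilon$ and is controlled by $\|\Phi\|_{\dot W^{2,\infty}}$ times the uniformly bounded quantities $\sup_\tau\iint|v|f_\varepsilon(\tau)\,\mathrm dv\,\mathrm dx\le C$ (Proposition~\ref{prn-priori}) and $\sup_\tau\sum_\alpha|\eta_{\alpha,\varepsilon}(\tau)|\le C$, so $|R_\varepsilon(t)-R_\varepsilon(s)|\le C\varepsilon\|\Phi\|_{\dot W^{2,\infty}}$; since $t,s\in[0,T]$ with $T$ fixed, the time-integral piece is also $\le C\varepsilon(t-s)\le C\varepsilon$. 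For the $(t-s)^{1/4}$ bound: the $\varepsilon\int v^\perp f_\varepsilon$ difference term is exactly the quantity shown in Lemma~\ref{lem-weak-form-rho} to be $C^{1/4}$ in $W^{-1,1}$, giving $C(t-s)^{1/4}$; the charge term $\varepsilon\sum_\alpha\nabla_x\Phi(\cdot,\xi_{\alpha,\varepsilon})\cdot\eta_{\alpha,\varepsilon}^\perp$ differenced in time splits into a piece from the time-dependence of $\Phi$ and $\xi_{\alpha,\varepsilon}$ — Hölder-$1/2$ via $|\dot\xi_{\alpha,\varepsilon}|=|\eta_{\alpha,\varepsilon}|/\varepsilon$ and $\eqref{eq-eta-continuity}$ is not even needed here since we only need $|\xi_{\alpha,\varepsilon}(t)-\xi_{\alpha,\varepsilon}(s)|\le C\varepsilon^{-1}(t-s)$, and multiplying by the prefactor $\varepsilon$ yields $C(t-s)$ — and a piece $\varepsilon\sum_\alpha\nabla_x\Phi(s,\xi_{\alpha,\varepsilon}(s))\cdot(\eta_{\alpha,\varepsilon}^\perp(t)-\eta_{\alpha,\varepsilon}^\perp(s))$ bounded by $\varepsilon\cdot C\varepsilon^{-2}(t-s)^{1/2}=C\varepsilon^{-1}(t-s)^{1/2}$ via \eqref{eq-eta-continuity}, which is $\le C(t-s)^{1/4}$ whenever $(t-s)^{1/4}\le\varepsilon$, and otherwise the $\varepsilon$-bound already suffices; taking the minimum of the two bounds closes the estimate. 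The main obstacle is the bookkeeping in the first paragraph: carefully carrying out the two integrations by parts in $s$ (one to eliminate $\varepsilon^{-1}$ in the plasma part, a second, similar one for the charges) while keeping track of which terms are genuinely $O(\varepsilon)$ and which symmetrize into $\mathcal H_\Phi$, and checking that the singular self-field of each charge never actually appears because of the renormalization and the vanishing of $H_\Phi$ on the diagonal.
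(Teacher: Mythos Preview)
Your proposal is correct and follows essentially the same route as the paper: test the equation of Lemma~\ref{lem-weak-form-rho} against $\Phi$ for the plasma part, differentiate $\Phi(s,\xi_{\alpha,\varepsilon}(s))$ and $\varepsilon\nabla_x\Phi(s,\xi_{\alpha,\varepsilon}(s))\cdot\eta_{\alpha,\varepsilon}^\perp(s)$ for the charge part, symmetrize all field contributions into $\mathcal{H}_\Phi[\rho_\varepsilon+\bar\delta_\varepsilon,\rho_\varepsilon+\bar\delta_\varepsilon]$, and collect the $O(\varepsilon)$ leftovers into $R_\varepsilon=I_\varepsilon^1+I_\varepsilon^2$. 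Your two-regime argument for \eqref{eq-R-varepsilon-est-2} (direct $\varepsilon$-bound from \eqref{eq-priori-1} versus $C^{1/4}$-bound from Lemma~\ref{lem-weak-form-rho} and \eqref{eq-eta-continuity}, then take the minimum) is exactly what the paper does.
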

\begin{proof}
Apply the equation in Lemma~\ref{lem-weak-form-rho} with the test function $\Phi$. After symmetrizing the term $\nabla_x\cdot(E^{\perp}_{\varepsilon}\rho_{\varepsilon})$ as \eqref{eq-symmetric-form}, we obtain
\begin{align}
&\int_{\mathbb{R}^2}\Phi(t,x)\rho_{\varepsilon}(t,x)\,\mathrm{d}x-\int_{\mathbb{R}^2}\Phi(0,x)\rho_{\varepsilon}(0,x)\,\mathrm{d}x-\int_0^t\int_{\mathbb{R}^2}\partial_t\Phi(s,x)\rho_{\varepsilon}(s,x)\,\mathrm{d}s\,\mathrm{d}x\nonumber\\
&=\int_0^t\mathcal{H}_{\Phi(s,\cdot)}[\rho_{\varepsilon}(s),\rho_{\varepsilon}(s)]\,\mathrm{d}s+\int_0^t\int_{\mathbb{R}^2}\nabla_x\Phi(s,x)\cdot F_{\varepsilon}^{\perp}\rho_{\varepsilon}(s,x)\,\mathrm{d}s\,\mathrm{d}x\nonumber\\
&\quad+\int_0^t\int_{\mathbb{R}^2}\nabla_x^2\Phi(s,x):\int_{\mathbb{R}^2}v^{\perp}\otimes vf_{\varepsilon}(s,x,v)\,\mathrm{d}v\,\mathrm{d}s\,\mathrm{d}x
+I_{\varepsilon}^1(t),\label{eq-pf-prn-weak-form-1}
\end{align}
where
\begin{align*}
I_{\varepsilon}^1(t)&=\varepsilon\int_0^t\int_{\mathbb{R}^2}\partial_t\nabla_x\Phi(s,x)\cdot \int_{\mathbb{R}^2}v^{\perp}f_{\varepsilon}(s,x,v)\,\mathrm{d}v\,\mathrm{d}x\,\mathrm{d}s\\
&\quad-\varepsilon\int_{\mathbb{R}^2}\nabla_x\Phi(t,x)\cdot \int_{\mathbb{R}^2}v^{\perp}f_{\varepsilon}(t,x,v)\,\mathrm{d}v\,\mathrm{d}x
+\varepsilon\int_{\mathbb{R}^2}\nabla_x\Phi(0,x)\cdot \int_{\mathbb{R}^2}v^{\perp}f_{\varepsilon}^0(x,v)\,\mathrm{d}v\,\mathrm{d}x,
\end{align*}
by \eqref{eq-priori-1} and the continuity of $\varepsilon\int v^{\perp}f_{\varepsilon}\,\mathrm{d}v$ in Lemma~\ref{lem-weak-form-rho}, we have for all $0\le s<t\le T$
\begin{equation}\label{eq-prn-weak-form-R-1-1}
\vert I_{\varepsilon}^1(t)-I_{\varepsilon}^1(s)\vert\le C\sum_{i=0,1}\|\partial_t^i\nabla_x\Phi\|_{L^{\infty}(\mathbb{R}_{+}\times\mathbb{R}^2)}\min\{\varepsilon,(t-s)^{1/4}\}.
\end{equation}

Next by simple calculation, we have
\begin{align*}
&\Phi(t,\xi_{\alpha,\varepsilon}(t))-\Phi(0,\xi_{\alpha,\varepsilon}(0))\\
&=\int_0^t\partial_t\Phi(s,\xi_{\alpha,\varepsilon}(s))\,\mathrm{d}s+\varepsilon^{-1}\int_0^t\eta_{\alpha,\varepsilon}(s)\cdot\nabla_x\Phi(s,\xi_{\alpha,\varepsilon}(s))\,\mathrm{d}s
\end{align*}
and
\begin{align*}
&\varepsilon\nabla_x\Phi(t,\xi_{\alpha,\varepsilon}(t))\cdot\eta_{\alpha,\varepsilon}^{\perp}(t)-\varepsilon\nabla_x\Phi(0,\xi_{\alpha,\varepsilon}(0))\cdot\eta_{\alpha,\varepsilon}^{\perp}(0)\\
&=\varepsilon\int_0^t\partial_t\nabla_x\Phi(s,\xi_{\alpha,\varepsilon}(s))\cdot\eta_{\alpha,\varepsilon}^{\perp}(s)\,\mathrm{d}s+\int_0^t\nabla_x^2\Phi(s,\xi_{\alpha,\varepsilon}(s)):\eta_{\alpha,\varepsilon}^{\perp}\otimes\eta_{\alpha,\varepsilon}(s)\,\mathrm{d}s\\
&\quad+\int_0^t\nabla_x\Phi(s,\xi_{\alpha,\varepsilon}(s))\cdot\Big(-\frac{\eta_{\alpha,\varepsilon}(s)}{\varepsilon}+E^{\perp}_{\varepsilon}(s,\xi_{\alpha,\varepsilon}(s))+\sum_{\beta:\beta\ne\alpha}\frac{(\xi_{\alpha,\varepsilon}(s)-\xi_{\beta,\varepsilon}(s))^{\perp}}{\vert\xi_{\alpha,\varepsilon}(s)-\xi_{\beta,\varepsilon}(s)\vert^2}\Big)\,\mathrm{d}s.
\end{align*}
Adding the two equations above, we obtain
\begin{align}
&\Phi(t,\xi_{\alpha,\varepsilon}(t))-\Phi(0,\xi_{\alpha,\varepsilon}(0))\nonumber\\
&=\int_0^t\partial_t\Phi(s,\xi_{\alpha,\varepsilon}(s))\,\mathrm{d}s+\int_0^t\nabla_x^2\Phi(s,\xi_{\alpha,\varepsilon}(s)):\eta_{\alpha,\varepsilon}^{\perp}\otimes\eta_{\alpha,\varepsilon}(s)\,\mathrm{d}s\nonumber\\
&\quad+\int_0^t\nabla_x\Phi(s,\xi_{\alpha,\varepsilon}(s))\cdot\left(E^{\perp}_{\varepsilon}(s,\xi_{\alpha,\varepsilon}(s))+\sum_{\beta:\beta\ne\alpha}\frac{(\xi_{\alpha,\varepsilon}(s)-\xi_{\beta,\varepsilon}(s))^{\perp}}{\vert\xi_{\alpha,\varepsilon}(s)-\xi_{\beta,\varepsilon}(s)\vert^2}\right)\,\mathrm{d}s+I_{\varepsilon}^2(t),\label{eq-pf-prn-weak-form-2}
\end{align}
where
\begin{align*}
I_{\varepsilon}^2(t)=&\varepsilon\nabla_x\Phi(0,\xi_{\alpha,\varepsilon}(0))\cdot\eta_{\alpha,\varepsilon}^{\perp}(0)-\varepsilon\nabla_x\Phi(t,\xi_{\alpha,\varepsilon}(t))\cdot\eta_{\alpha,\varepsilon}^{\perp}(t)\\
&+\varepsilon\int_0^t\partial_t\nabla_x\Phi(s,\xi_{\alpha,\varepsilon}(s))\cdot\eta_{\alpha,\varepsilon}^{\perp}(s)\,\mathrm{d}s.
\end{align*}
by the estimates in Proposition~\ref{prn-priori} we know that
\begin{equation}\label{eq-prn-weak-form-R-2-1}
\vert I_{\varepsilon}^2(t)\vert\le C\varepsilon(t\|\partial_t\nabla_x\Phi(s,x)\|_{\infty}+\|\nabla_x\Phi(t,x)\|_{\infty}).
\end{equation}
Recall \eqref{eq-eta-continuity}, we have  for all $0\le s<t\le T$
\begin{equation}\label{eq-prn-weak-form-R-2-2}
\vert I_{\varepsilon}^2(t)-I_{\varepsilon}^2(s)\vert\le C\sum_{i=0,1}\|\partial_t^i\nabla_x\Phi\|_{L^{\infty}(\mathbb{R}_{+}\times\mathbb{R}^2)}\varepsilon^{-1}(t-s)^{1/2}.
\end{equation}

Moreover, we observe that
\begin{align*}
&\int_0^t\int_{\mathbb{R}^2}\nabla_x\Phi(s,x)F_{\varepsilon}^{\perp}(s,x)\rho_{\varepsilon}(s,x)\,\mathrm{d}s\,\mathrm{d}x+\sum_{\alpha}\int_0^t\nabla_x\Phi(s,\xi_{\alpha,\varepsilon}(s))\cdot E^{\perp}_{\varepsilon}(s,\xi_{\alpha,\varepsilon}(s))\,\mathrm{d}s\\
&=\sum_{\alpha}\int_0^t\int_{\mathbb{R}^2}\big(\nabla_x\Phi(s,x)-\nabla_x\Phi(s,\xi_{\alpha,\varepsilon}(s))\big)\cdot\frac{(x-\xi_{\alpha,\varepsilon}(s))^{\perp}}{\vert x-\xi_{\alpha,\varepsilon}(s)\vert^2}\rho_{\varepsilon}(s,x)\,\mathrm{d}x\,\mathrm{d}s\\
&=2\int_0^t\mathcal{H}_{\Phi(s)}[\rho_{\varepsilon}(s),\bar{\delta}_{\varepsilon}(s)]\,\mathrm{d}s.
\end{align*}
Similarly
\begin{equation*}
\sum_{\alpha}\int_0^t\nabla_x\Phi(s,\xi_{\alpha,\varepsilon}(s))\cdot\sum_{\beta:\beta\ne\alpha}\frac{(\xi_{\alpha,\varepsilon}(s)-\xi_{\beta,\varepsilon}(s))^{\perp}}{\vert\xi_{\alpha,\varepsilon}(s)-\xi_{\beta,\varepsilon}(s)\vert^2}\,\mathrm{d}s=\frac{1}{2}\sum_{\alpha\ne\beta}\int_0^tH_{\Phi(s)}\big(\xi_{\alpha,\varepsilon}(s),\xi_{\beta,\varepsilon}(s)\big)\,\mathrm{d}s.
\end{equation*}
And notice that
\begin{align*}
&\mathcal{H}_{\Phi(s)}[\rho_{\varepsilon}(s)+\bar{\delta}_{\varepsilon}(s),\rho_{\varepsilon}(s)+\bar{\delta}_{\varepsilon}(s)]\\
&=\mathcal{H}_{\Phi(s)}[\rho_{\varepsilon}(s),\rho_{\varepsilon}(s)]+2\mathcal{H}_{\Phi(s)}[\rho_{\varepsilon}(s),\bar{\delta}_{\varepsilon}(s)]+\frac{1}{2}\sum_{\alpha\ne\beta}H_{\Phi(s)}\big(\xi_{\alpha,\varepsilon}(s),\xi_{\beta,\varepsilon}(s)\big).
\end{align*}
By the above observation, the weak formulation follows by adding \eqref{eq-pf-prn-weak-form-1} and the sum in $\alpha$ of \eqref{eq-pf-prn-weak-form-2}. The estimate on the remainder term $R_{\varepsilon}=I_{\varepsilon}^1+I_{\varepsilon}^2$ follows from \eqref{eq-prn-weak-form-R-1-1}, \eqref{eq-prn-weak-form-R-2-1}, \eqref{eq-prn-weak-form-R-2-2}.
\end{proof}

\subsection{Time equi-continuity}

As explained in the introduction, due to the fact that the trajectories of the point charges cannot be separated from each other when the strength of the magnetic field tends towards infinity, Miot's method is ineffective to obtain the time equi-continuity of  $\rho_{\varepsilon}$ and $\xi_{\alpha,\varepsilon}$. In this subsection, we address this problem by an iteration argument.

Firstly, we obtain the following lemma by the arguments of \cite[Cor.2.11]{Mio19}.

\begin{Lemma}\label{lem-min-continuity-point-charge}
Let $T>0$. There exists  $\widetilde{K}_{0}>2$ depending only on $N,T,K_0,K_1$ and let $\widetilde{\varepsilon}_0:=\widetilde{K}_{0}^{-16}$, such that for all $0<\varepsilon <\widetilde{\varepsilon}_0$ and for all  $0\le s < t\le T$ with $t-s\le\widetilde{\varepsilon}_0$
\begin{equation*}
\max_{\alpha}\min_{\beta}\vert\xi_{\alpha,\varepsilon}(t)-\xi_{\beta,\varepsilon}(s)\vert\le\widetilde{K}_{0}\min\{\sqrt{t-s+\varepsilon},(t-s)^{1/4}\}.
\end{equation*}
\end{Lemma}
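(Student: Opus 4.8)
The strategy is to track a single point charge $\xi_{\alpha,\varepsilon}$ forward from time $s$ to time $t$ and show it cannot travel far, by combining the velocity bound with the key singular-field estimate of Corollary~\ref{coro-key-L0}. First I would write, from the ODE $\dot\xi_{\alpha,\varepsilon}=\eta_{\alpha,\varepsilon}/\varepsilon$,
\begin{equation*}
\vert\xi_{\alpha,\varepsilon}(t)-\xi_{\alpha,\varepsilon}(s)\vert\le\varepsilon^{-1}\int_s^t\vert\eta_{\alpha,\varepsilon}(\tau)\vert\,\mathrm{d}\tau\le C\varepsilon^{-1}(t-s),
\end{equation*}
using the uniform bound $\vert\eta_{\alpha,\varepsilon}\vert\le C$ from \eqref{eq-priori-1}. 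This already gives the $\sqrt{t-s+\varepsilon}$ branch once $t-s\lesssim\varepsilon$, but is useless for $t-s\gg\varepsilon$, so the real work is to get a bound that does not blow up as $\varepsilon\to0$. For that I would integrate the $\eta_{\alpha,\varepsilon}$ equation more carefully: the term $\eta_{\alpha,\varepsilon}^{\perp}/\varepsilon^2$ is a rotation (it does not change $\vert\eta_{\alpha,\varepsilon}\vert$ and, crucially, integrates against the test structure to something controllable after the $\varepsilon$-gyration average), the inter-charge field $\sum_{\beta\ne\alpha}(\xi_{\alpha,\varepsilon}-\xi_{\beta,\varepsilon})/\vert\xi_{\alpha,\varepsilon}-\xi_{\beta,\varepsilon}\vert^2$ is bounded by $C/M$ thanks to \eqref{eq-priori-positive-distance}, and $\int_0^t\vert E_\varepsilon(\tau,\xi_{\alpha,\varepsilon}(\tau))\vert\,\mathrm{d}\tau$ is exactly what $L_{0,\varepsilon}$ controls. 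This mirrors the single-charge argument of \cite[Cor.2.11]{Mio19}, and the multi-charge feature only enters benignly through \eqref{eq-priori-positive-distance}.

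Concretely, the second step is to use the oscillatory structure to absorb the $\varepsilon^{-2}$ factors. Write $\dot{(\varepsilon\eta_{\alpha,\varepsilon}^{\perp})}=-\eta_{\alpha,\varepsilon}/\varepsilon+$ (lower-order), i.e.~$\eta_{\alpha,\varepsilon}/\varepsilon=-\tfrac{\mathrm{d}}{\mathrm{d}\tau}(\varepsilon\eta_{\alpha,\varepsilon}^{\perp})+\bigl(E_\varepsilon(\tau,\xi_{\alpha,\varepsilon})+\sum_{\beta\ne\alpha}\tfrac{\xi_{\alpha,\varepsilon}-\xi_{\beta,\varepsilon}}{\vert\xi_{\alpha,\varepsilon}-\xi_{\beta,\varepsilon}\vert^2}\bigr)^{\perp}$ — this is the same manipulation used inside the proof of Proposition~\ref{prn-weak-form} for $\Phi(t,\xi_{\alpha,\varepsilon}(t))$. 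Integrating $\dot\xi_{\alpha,\varepsilon}=\eta_{\alpha,\varepsilon}/\varepsilon$ from $s$ to $t$ and substituting this identity gives
\begin{equation*}
\xi_{\alpha,\varepsilon}(t)-\xi_{\alpha,\varepsilon}(s)=-\varepsilon\bigl(\eta_{\alpha,\varepsilon}^{\perp}(t)-\eta_{\alpha,\varepsilon}^{\perp}(s)\bigr)+\int_s^t\Bigl(E_\varepsilon(\tau,\xi_{\alpha,\varepsilon}(\tau))+\sum_{\beta:\beta\ne\alpha}\tfrac{\xi_{\alpha,\varepsilon}(\tau)-\xi_{\beta,\varepsilon}(\tau)}{\vert\xi_{\alpha,\varepsilon}(\tau)-\xi_{\beta,\varepsilon}(\tau)\vert^2}\Bigr)^{\perp}\,\mathrm{d}\tau.
\end{equation*}
The first term on the right is $\le C\varepsilon$ by \eqref{eq-priori-1}; the inter-charge sum contributes $\le C(t-s)/M\le C(t-s)$ by \eqref{eq-priori-positive-distance}; and the $E_\varepsilon$ integral is bounded, after summing over $\alpha$, by $L_{0,\varepsilon}(t)-L_{0,\varepsilon}(s)$ up to the trivial estimate $\vert E_\varepsilon(\tau,\xi_{\alpha,\varepsilon}(\tau))\vert\le\int\vert\xi_{\alpha,\varepsilon}(\tau)-y\vert^{-1}\rho_\varepsilon(\tau,y)\,\mathrm{d}y$ — but I must be slightly careful, since $L_{0,\varepsilon}$ is defined via the $f_\varepsilon^0$-representation along characteristics rather than directly along $\xi_{\alpha,\varepsilon}$; the measure-preserving change of variables of Section~2.1 identifies the two. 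Using Corollary~\ref{coro-key-L0}, for $t-s\le t_0$ this integral is $\le C\varepsilon^{-1}(t-s)^{1/2}\cdot\varepsilon=$ — no: here I invoke instead the local-time bound applied on $[s,t]$, which after rescaling the origin of time gives $\int_s^t\vert E_\varepsilon(\tau,\xi_{\alpha,\varepsilon})\vert\,\mathrm{d}\tau\le C\varepsilon^{-1}(t-s)^{1/2}$. Multiplying through there is no stray $\varepsilon$, so this would be the wrong sign of power; the correct reading is that the $E_\varepsilon$-contribution to $\xi_{\alpha,\varepsilon}(t)-\xi_{\alpha,\varepsilon}(s)$ is itself $O((t-s)^{1/2})$ uniformly in $\varepsilon$ once one notes $L_{0,\varepsilon}$ bounds the \emph{time-integrated} field directly (no $\varepsilon^{-1}$), cf.~the definition of $L_{0,\varepsilon}$. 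Collecting, $\vert\xi_{\alpha,\varepsilon}(t)-\xi_{\alpha,\varepsilon}(s)\vert\le C\varepsilon+C(t-s)+C(t-s)^{1/2}\le C(t-s)^{1/4}$ for $t-s\le\widetilde\varepsilon_0$ and $\varepsilon\le\widetilde\varepsilon_0$; choosing $\widetilde K_0$ large and $\widetilde\varepsilon_0=\widetilde K_0^{-4}$ gives both branches of the min. Finally, $\max_\alpha\min_\beta\vert\xi_{\alpha,\varepsilon}(t)-\xi_{\beta,\varepsilon}(s)\vert\le\max_\alpha\vert\xi_{\alpha,\varepsilon}(t)-\xi_{\alpha,\varepsilon}(s)\vert$ by taking $\beta=\alpha$, which finishes the estimate.

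The main obstacle is the bookkeeping around $L_{0,\varepsilon}$: one needs the bound on $\int_0^t\vert E_\varepsilon(\tau,\xi_{\alpha,\varepsilon}(\tau))\vert\,\mathrm{d}\tau$ \emph{with the correct power of $t$ and no negative power of $\varepsilon$}, and then to localize it to an arbitrary subinterval $[s,t]$ rather than $[0,t]$ — this requires either re-running the proof of Corollary~\ref{coro-key-L0} with $0$ replaced by $s$ (the argument is translation-invariant in time since all a priori bounds are uniform in $t$), or an additivity property $L_{0,\varepsilon}$ restricted to $[s,t]$ $\le C\varepsilon^{-1}(t-s)^{1/2}$. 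A secondary point is ensuring the characteristic-versus-point-charge identification is legitimate, i.e.~that $E_\varepsilon$ evaluated along $\xi_{\alpha,\varepsilon}$ is genuinely dominated by the quantity appearing in $L_{0,\varepsilon}$; this is where the measure-preserving property of the flow and the explicit convolution form of $E_\varepsilon$ are used. Everything else — the rotation identity, the inter-charge bound via \eqref{eq-priori-positive-distance}, the final choice of constants — is routine.
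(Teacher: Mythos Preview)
Your direct ODE approach has a genuine gap at the $E_\varepsilon$ step, and you in fact notice it and then wave it away incorrectly. The identity
\[
\xi_{\alpha,\varepsilon}(t)-\xi_{\alpha,\varepsilon}(s)=-\varepsilon\bigl(\eta_{\alpha,\varepsilon}^{\perp}(t)-\eta_{\alpha,\varepsilon}^{\perp}(s)\bigr)+\int_s^t\Bigl(E_\varepsilon(\tau,\xi_{\alpha,\varepsilon})+\sum_{\beta\ne\alpha}\frac{\xi_{\alpha,\varepsilon}-\xi_{\beta,\varepsilon}}{|\xi_{\alpha,\varepsilon}-\xi_{\beta,\varepsilon}|^2}\Bigr)^{\perp}\,\mathrm{d}\tau
\]
is correct, and the boundary and inter-charge terms behave as you say. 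But your identification of the $E_\varepsilon$ integral with $L_{0,\varepsilon}$ is also correct, and Corollary~\ref{coro-key-L0} gives precisely $L_{0,\varepsilon}\le C\varepsilon^{-1}(t-s)^{1/2}$ on short intervals --- the $\varepsilon^{-1}$ is really there, both in the statement and in the proof (it enters through the $\varepsilon^{-1}tH_{k+1,\varepsilon}$ term of Proposition~\ref{prn-estimate-Lk}). Your ``correct reading'' that $L_{0,\varepsilon}$ bounds the time-integrated field with no $\varepsilon^{-1}$ is simply false. Hence your bound reads $|\xi_{\alpha,\varepsilon}(t)-\xi_{\alpha,\varepsilon}(s)|\le C\varepsilon+C(t-s)+C\varepsilon^{-1}(t-s)^{1/2}$, which is useless as $\varepsilon\to0$ for fixed $t-s$. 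No amount of bookkeeping or time-translation fixes this; the available pointwise control on $E_\varepsilon(\cdot,\xi_{\alpha,\varepsilon})$ is not uniform in $\varepsilon$.

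The paper avoids this obstruction by a completely different route: it never estimates $\int_s^t|E_\varepsilon(\tau,\xi_{\alpha,\varepsilon})|\,\mathrm{d}\tau$ at all. Instead it uses the weak formulation of Proposition~\ref{prn-weak-form}, where the self-consistent field appears only through the symmetrized form $\mathcal H_\Phi[\rho_\varepsilon+\bar\delta_\varepsilon,\rho_\varepsilon+\bar\delta_\varepsilon]$, bounded by $C\|\nabla^2\Phi\|_\infty$ times mass-type quantities, and the singular $L_{0,\varepsilon}$ bound is used only inside the remainder $R_\varepsilon$, which carries a compensating factor of $\varepsilon$. This yields $|\langle\rho_\varepsilon(t)+\bar\delta_\varepsilon(t)-\rho_\varepsilon(s)-\bar\delta_\varepsilon(s),\Phi\rangle|\le C\|\Phi\|_{\dot W^{2,\infty}}(t-s+\varepsilon)$ (and the $(t-s)^{1/4}$ variant). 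One then argues by contradiction: if $\min_\beta|\xi_{\alpha,\varepsilon}(t)-\xi_{\beta,\varepsilon}(s)|>\widetilde K_0\sqrt{t-s+\varepsilon}$, test with a cut-off $\Phi$ supported in the ball $B(\xi_{\alpha,\varepsilon}(t),\widetilde K_0\sqrt{t-s+\varepsilon})$, so that the Dirac part jumps by at least $1$ between times $s$ and $t$; the density part is small by the non-concentration estimate~\eqref{eq-estimate-non-concentration}, and the right-hand side is small because $\|\nabla^2\Phi\|_\infty\sim\widetilde K_0^{-2}(t-s+\varepsilon)^{-1}$. Choosing $\widetilde K_0$ large gives the contradiction. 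Note in particular that the non-concentration bound, which you do not use at all, is the essential ingredient replacing pointwise control of $E_\varepsilon$ at $\xi_{\alpha,\varepsilon}$. What you were trying to prove directly --- control of $|\xi_{\alpha,\varepsilon}(t)-\xi_{\alpha,\varepsilon}(s)|$ itself --- is the content of the \emph{next} lemma, and the paper obtains it only by an iterative bootstrap that uses the present lemma as input.
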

\begin{Remark}\label{rem-4-1}
By the above lemma, for each $\alpha$, there exists $\beta_{\alpha}$ such that
\begin{equation*}
\vert\xi_{\alpha,\varepsilon}(t)-\xi_{\beta_{\alpha},\varepsilon}(s)\vert\le\widetilde{K}_{0}\min\{\sqrt{t-s+\varepsilon},(t-s)^{1/4}\}.
\end{equation*}
Since any two point charges have a strictly positive distance according to \eqref{eq-priori-positive-distance} and $\varepsilon$ is small enough, $\beta_{\alpha}\ne\beta_{\alpha'}$ if $\alpha\ne\alpha'$. Hence
\begin{align*}
\left\vert\sum_{\alpha}\xi_{\alpha,\varepsilon}(t)-\sum_{\beta}\xi_{\beta,\varepsilon}(s)\right\vert&\le\sum_{\alpha}\vert\xi_{\alpha,\varepsilon}(t)-\xi_{\beta_{\alpha},\varepsilon}(s)\vert\\
&\le N\widetilde{K}_{0}\min\{\sqrt{t-s+\varepsilon},(t-s)^{1/4}\},
\end{align*}
which means that the trajectory of the center of the point charges is time equi-continuous.
\end{Remark}
\begin{proof}
We define $g(t,s,\varepsilon):=\widetilde{K}_{0}\min\{\sqrt{t-s+\varepsilon},(t-s)^{1/4}\}$ with $\widetilde{K}_0$ determined later.  By Proposition~\ref{prn-weak-form} we have for all  $\Phi\in C_{c}^{\infty}(\mathbb{R}^2)$ and for all  $0\le s < t\le T$
\begin{align*}
&\int_{\mathbb{R}^2}\Phi(x)\big(\rho_{\varepsilon}(t,x)\,\mathrm{d}x+\bar{\delta}_{\varepsilon}(t,\mathrm{d}x)\big)-\int_{\mathbb{R}^2}\Phi(x)\big(\rho_{\varepsilon}(s,x)\,\mathrm{d}x+\bar{\delta}_{\varepsilon}(s,\mathrm{d}x)\big)\\
&=\int_s^t\mathcal{H}_{\Phi}[\rho_{\varepsilon}(\tau,\cdot)+\bar{\delta}_{\varepsilon}(\tau,\cdot),\rho_{\varepsilon}(\tau,\cdot)+\bar{\delta}_{\varepsilon}(\tau,\cdot)]\,\mathrm{d}\tau+\int_s^t\iint_{\mathbb{R}^4}\nabla_x^2\Phi(x):v^{\perp}\otimes vf_{\varepsilon}(\tau,x,v)\,\mathrm{d}v\,\mathrm{d}x\,\mathrm{d}\tau\\
&\quad+\sum_{\alpha}\int_s^t\nabla_x^2\Phi(\xi_{\alpha,\varepsilon}(\tau)):\eta_{\alpha,\varepsilon}^{\perp}\otimes\eta_{\alpha,\varepsilon}(\tau)\,\mathrm{d}\tau+R_{\varepsilon}(t)-R_{\varepsilon}(s).
\end{align*}
Since $\vert H_{\Phi}(x,y)\vert\le C\|\nabla_x^2\Phi\|_{\infty}$, by Proposition~\ref{prn-priori} and \eqref{eq-R-varepsilon-est-2}, it follows that
\begin{align}
&\Big\vert\int_{\mathbb{R}^2}\Phi(x)\rho_{\varepsilon}(t,x)\,\mathrm{d}x+\sum_{\alpha}\Phi(\xi_{\alpha,\varepsilon}(t))-\int_{\mathbb{R}^2}\Phi(x)\rho_{\varepsilon}(s,x)\,\mathrm{d}x-\sum_{\alpha}\Phi(\xi_{\alpha,\varepsilon}(s))\Big\vert\nonumber\\
&\le C\|\nabla_x^2\Phi\|_{\infty}(t-s)\sup_{\tau\in[s,t]}\Big(\|\rho_{\varepsilon}(\tau)\|^{2}_{1}+\|\rho_{\varepsilon}(\tau)\|_{1}+\iint_{\mathbb{R}^2}\vert v\vert^{2}f_{\varepsilon}(\tau)\,\mathrm{d}x\,\mathrm{d}v+\sum_{\alpha}\vert\eta_{\alpha,\varepsilon}(\tau)\vert^{2}\Big)\nonumber\\
&\quad+C\|\nabla_x\Phi\|_{\infty}\min\{\varepsilon,(t-s)^{1/4}\}\nonumber\\
&\le C\|\nabla_x^2\Phi\|_{\infty}(t-s)+C\|\nabla_x\Phi\|_{\infty}\min\{\varepsilon,(t-s)^{1/4}\}.\label{eq-lem-min-continuity-point-charge-1}
\end{align}

By contradiction, we assume there exist $\bar{\varepsilon},\bar{t},\bar{s}$,  without loss of generality assume $\bar{t}>\bar{s}$, and some $\bar{\alpha}$ satisfying $\min_{\beta}\vert\xi_{\bar{\alpha},\bar{\varepsilon}}(\bar{t})-\xi_{\beta,\bar{\varepsilon}}(\bar{s})\vert>g(\bar{t},\bar{s},\bar{\varepsilon})$. We set
\begin{equation*}
\Phi(x)=\chi\left(\frac{2(x-\xi_{\bar{\alpha},\bar{\varepsilon}}(\bar{t}))}{g(\bar{t},\bar{s},\bar{\varepsilon})}\right),
\end{equation*}
where $\chi$ is a smooth cut-off function such that $\chi=1$ on $B(0,1)$, $\chi$ vanishes on $B(0,2)^c$ and $0\le \chi(x)\le 1$ for all $x\in\mathbb{R}^2$. In particular, we have
\begin{equation*}
\sum_{\alpha}\Phi(\xi_{\alpha,\bar{\varepsilon}}(\bar{t}))\ge 1,\quad\sum_{\alpha}\Phi(\xi_{\alpha,\bar{\varepsilon}}(\bar{s}))=0,\quad\|\nabla_x\Phi\|_{\infty}\le \frac{C}{g(\bar{t},\bar{s},\bar{\varepsilon})},\quad\|\nabla_x^2\Phi\|_{\infty}\le \frac{C}{[g(\bar{t},\bar{s},\bar{\varepsilon})]^2}.
\end{equation*}
Moreover, by the definition of $g$, we have
\begin{align*}
&\|\nabla_x^2\Phi\|_{\infty}(\bar{t}-\bar{s})\le\frac{C(\bar{t}-\bar{s})}{\widetilde{K}_{0}^2\min\{\bar{t}-\bar{s}+\bar{\varepsilon},(\bar{t}-\bar{s})^{1/2}\}}\le C\widetilde{K}_0^{-2},\\
&\|\nabla_x\Phi\|_{\infty}\min\{\bar{\varepsilon},(\bar{t}-\bar{s})^{1/4}\}\le\frac{C\min\{\bar{\varepsilon},(\bar{t}-\bar{s})^{1/4}\}}{\widetilde{K}_{0}\min\{\sqrt{\bar{t}-\bar{s}+\bar{\varepsilon}},(\bar{t}-\bar{s})^{1/4}\}}\le C\widetilde{K}_0^{-1}.
\end{align*}
Notice also $g(\bar{t},\bar{s},\bar{\varepsilon})\le\widetilde{K}_{0}^{-1}$ by the definition of $\widetilde{\varepsilon}_0$. In view of \eqref{eq-lem-min-continuity-point-charge-1} with $t=\bar{t},s=\bar{s}$ and using \eqref{eq-estimate-non-concentration}, we get
\begin{align*}
1&\le 2\sup_{\tau\in[0,T]}\int_{B(\xi_{\bar{\alpha},\bar{\varepsilon}}(\tau),g(\bar{t},\bar{s},\bar{\varepsilon}))}\rho_{\bar{\varepsilon}}(\tau,x)\,\mathrm{d}x+C\widetilde{K}_0^{-1}\\
&\le 2\sup_{\tau\in[0,T]}\int_{B(\xi_{\bar{\alpha},\bar{\varepsilon}}(\tau),\widetilde{K}_0^{-1})}\rho_{\bar{\varepsilon}}(\tau,x)\,\mathrm{d}x+C\widetilde{K}_0^{-1}\\
&\le C\left(\vert\ln\widetilde{K}_0\vert^{-1/2}+\widetilde{K}_0^{-1}\right)\le\frac{1}{2}.
\end{align*}
if we choose $\widetilde{K}_0$ depending only on $N,T,K_0,K_1$ sufficiently large. This yields a contradiction and we obtain $\max_{\alpha}\min_{\beta}\vert\xi_{\alpha,\varepsilon}(t)-\xi_{\beta,\varepsilon}(s)\vert\le g(t,s,\varepsilon)$.
\end{proof}

Now we prove the crucial lemma in this section, which allows us to obtain the time equi-continuity of each point charges from Lemma~\ref{lem-min-continuity-point-charge}.

\begin{Lemma}\label{lem-small-time-continuity-point-charge}
Choose $\widetilde{K}_{0}>2$ large enough in Lemma~\ref{lem-min-continuity-point-charge} and let $\widetilde{\varepsilon}_0=\widetilde{K}_{0}^{-16}$. Then for all $0<\varepsilon <\widetilde{\varepsilon}_0$, $0\le s < t\le T$ with $t-s\le \widetilde{\varepsilon}_0$ and $1\le\alpha\le N$
\begin{equation*}
\vert\xi_{\alpha,\varepsilon}(t)-\xi_{\alpha,\varepsilon}(s)\vert\le\widetilde{K}_{0}\min\{\sqrt{t-s+\varepsilon},(t-s)^{1/4}\}.
\end{equation*}
\end{Lemma}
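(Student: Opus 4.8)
The plan is to run an iteration on time using Lemma~\ref{lem-min-continuity-point-charge} as the one-step estimate. The key point is that Lemma~\ref{lem-min-continuity-point-charge} controls only $\max_{\alpha}\min_{\beta}|\xi_{\alpha,\varepsilon}(t)-\xi_{\beta,\varepsilon}(s)|$ — for each particle at time $t$ there is \emph{some} particle at time $s$ which is close — whereas we want to conclude that $\xi_{\alpha,\varepsilon}(t)$ is close to $\xi_{\alpha,\varepsilon}(s)$ for the \emph{same} index $\alpha$. The bridge between the two is the uniform separation \eqref{eq-priori-positive-distance}: the charges stay at mutual distance $\ge M$ for all times and all $\varepsilon$. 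So the strategy is: first establish a ``small-jump'' estimate on a possibly tiny but $\varepsilon$-dependent grid, then upgrade from ``$\min_\beta$'' to ``same index $\alpha$'' by a relabeling/matching argument that uses $M$, and finally conclude on the uniform interval $t-s\le\widetilde{K}_0^{-4}$.

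First I would fix $0\le s<t\le T$ with $t-s\le\widetilde K_0^{-4}$ and subdivide $[s,t]$ into $m$ subintervals $s=\tau_0<\tau_1<\dots<\tau_m=t$ of equal length $\ell=(t-s)/m$, choosing $m$ large enough that $\ell\le\varepsilon$ (so $\ell$ is comparable to $\varepsilon$; here $\varepsilon<\widetilde\varepsilon_0=\widetilde K_0^{-8}$ is used to make each subinterval admissible in Lemma~\ref{lem-min-continuity-point-charge}, whose admissibility threshold was $\widetilde K_0^{-4}$). On each subinterval Lemma~\ref{lem-min-continuity-point-charge} gives, for each $\alpha$, an index $\beta=\beta(\alpha,j)$ with $|\xi_{\alpha,\varepsilon}(\tau_{j+1})-\xi_{\beta,\varepsilon}(\tau_j)|\le\widetilde K_0\sqrt{\ell+\varepsilon}\le\widetilde K_0\sqrt{2\varepsilon}$. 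Since $\widetilde K_0\sqrt{2\varepsilon}<M/3$ once $\widetilde K_0$ is large and $\varepsilon$ small, and since at the fixed time $\tau_j$ all the $\xi_{\beta,\varepsilon}(\tau_j)$ are $M$-separated, the index $\beta$ realizing the minimum is \emph{unique}, so $\alpha\mapsto\beta(\alpha,j)$ is a well-defined map; a counting argument (it is injective because the images of two different $\alpha$'s would have to be within $2\widetilde K_0\sqrt{2\varepsilon}<M$ of each other, forcing them equal) shows it is a permutation $\pi_j$ of $\{1,\dots,N\}$. Now I would argue that in fact $\pi_j=\mathrm{id}$: the displacement of each charge over a single subinterval is at most... one needs an a~priori modulus of continuity of $\xi_{\alpha,\varepsilon}$ on scale $\ell\approx\varepsilon$ that is $o(M)$. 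This is where \eqref{eq-priori-1} ($|\eta_{\alpha,\varepsilon}|\le C$) together with $\dot\xi_{\alpha,\varepsilon}=\eta_{\alpha,\varepsilon}/\varepsilon$ gives $|\xi_{\alpha,\varepsilon}(\tau_{j+1})-\xi_{\alpha,\varepsilon}(\tau_j)|\le C\ell/\varepsilon\le C$, which is not automatically $<M$; so instead I would use Lemma~\ref{lem-min-continuity-point-charge} itself symmetrically, or note that $\xi_{\alpha,\varepsilon}(\tau_{j+1})$ is within $\widetilde K_0\sqrt{2\varepsilon}$ of $\xi_{\pi_j(\alpha),\varepsilon}(\tau_j)$ and also, applying the lemma from $\tau_j$ backward... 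Cleanest is: the map must be the identity because composing the forward matchings over the whole interval $[s,t]$ and using $|\xi_{\alpha,\varepsilon}(t)-\xi_{\alpha,\varepsilon}(s)|$ small for at least one lift would contradict $M$-separation unless $\pi_j$ fixes each index — I would phrase this via continuity in $j$ of the permutation (it cannot jump) plus $\pi_0$-type normalization.

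Granting $\pi_j=\mathrm{id}$ for all $j$, summing the one-step bounds along the grid gives
\[
\sum_{\alpha}|\xi_{\alpha,\varepsilon}(t)-\xi_{\alpha,\varepsilon}(s)|\le\sum_{j=0}^{m-1}\sum_{\alpha}|\xi_{\alpha,\varepsilon}(\tau_{j+1})-\xi_{\alpha,\varepsilon}(\tau_j)|\le Nm\,\widetilde K_0\sqrt{\ell+\varepsilon},
\]
but this telescoped bound degrades like $m\sqrt{\varepsilon}$ and is useless directly. The right move is \emph{not} to telescope naively but to re-apply Lemma~\ref{lem-min-continuity-point-charge} on the \emph{whole} interval $[s,t]$ once we know the matching is by identity: the lemma's statement already contains $\min\{\sqrt{t-s+\varepsilon},(t-s)^{1/4}\}$ for \emph{arbitrary} admissible $s,t$, and $t-s\le\widetilde K_0^{-4}$ is admissible. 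So the role of the grid iteration is solely to promote $\max_\alpha\min_\beta$ to a sum over matched indices; once that is done, the bound $\sum_\alpha|\xi_{\alpha,\varepsilon}(t)-\xi_{\alpha,\varepsilon}(s)|\le N\widetilde K_0\min\{\sqrt{t-s+\varepsilon},(t-s)^{1/4}\}$ follows, and after absorbing $N$ into the constant (or enlarging $\widetilde K_0$) we get the claim. The main obstacle is precisely the relabeling step: proving that the time-dependent optimal matching between $\{\xi_{\alpha,\varepsilon}(t)\}$ and $\{\xi_{\beta,\varepsilon}(s)\}$ is the identity permutation uniformly in $\varepsilon$, which rests on balancing the $M$-separation \eqref{eq-priori-positive-distance} against the short-time displacement controlled by \eqref{eq-priori-1}; the $\varepsilon$-dependent choice of step size is engineered exactly so that, on each step, the displacement is forced below $M/2$ and the matching cannot permute.
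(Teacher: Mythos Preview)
Your overall plan---use the separation \eqref{eq-priori-positive-distance} to upgrade $\max_\alpha\min_\beta$ to a same-index estimate and then read off the final bound from a single application of Lemma~\ref{lem-min-continuity-point-charge} on $[s,t]$---is right, but the argument as written has a real gap. The ODE issue you flag is easily repaired (take $\ell=\widetilde K_0^{-4}\varepsilon$, not just $\ell\le\varepsilon$, so that $|\xi_{\alpha,\varepsilon}(\tau_{j+1})-\xi_{\alpha,\varepsilon}(\tau_j)|\le\widetilde C\widetilde K_0^{-4}<M/2$ forces $\pi_j=\mathrm{id}$). The genuine problem is the step after: knowing $\pi_j=\mathrm{id}$ on every subinterval does \emph{not} imply that the matching furnished by Lemma~\ref{lem-min-continuity-point-charge} over the whole interval $[s,t]$ is the identity; for that you would need $|\xi_{\alpha,\varepsilon}(t)-\xi_{\alpha,\varepsilon}(s)|<M/2$, and the only bound on this quantity available at that stage is the telescoped one, which you yourself note blows up like $(t-s)\varepsilon^{-1/2}$. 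Your sentence ``re-apply Lemma~\ref{lem-min-continuity-point-charge} on the whole interval once we know the matching is by identity'' is therefore circular.

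The paper fills this gap by a multi-scale bootstrap rather than a single fine grid: from the same-index bound on intervals of length $\widetilde K_0^{-4}\varepsilon$ it passes to length $\widetilde K_0^{-4}\varepsilon^{3/4}$ by telescoping over $\sim\varepsilon^{-1/4}$ pieces (total displacement $\lesssim\widetilde K_0\varepsilon^{1/4}<M/2$) and \emph{then} re-applies Lemma~\ref{lem-min-continuity-point-charge} on the longer interval to recover the sharp bound there; iterating through scales $\varepsilon^{1/2},\varepsilon^{1/4},\ldots,\varepsilon^{2^{-k}}$ reaches $t-s\le\widetilde K_0^{-4}$ in the limit. Your ``continuity of the permutation'' remark can also be turned into a complete proof, but only if carried out in continuous time: for each $\tau\in[s,t]$ apply Lemma~\ref{lem-min-continuity-point-charge} on $[s,\tau]$ to get a unique $\beta_\alpha(\tau)$ with $|\xi_{\alpha,\varepsilon}(\tau)-\xi_{\beta_\alpha(\tau),\varepsilon}(s)|\le\sqrt{2}\,\widetilde K_0^{-1}<M/2$; the sets $\{\tau:\beta_\alpha(\tau)=\gamma\}=\{\tau:|\xi_{\alpha,\varepsilon}(\tau)-\xi_{\gamma,\varepsilon}(s)|<M/2\}$ are open and partition $[s,t]$, so connectedness together with $\beta_\alpha(s)=\alpha$ gives $\beta_\alpha\equiv\alpha$. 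That route is shorter than the paper's iteration, but it is not the argument you actually wrote.
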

\begin{proof}
We define $g(t,s,\varepsilon):=\widetilde{K}_{0}\min\{\sqrt{t-s+\varepsilon},(t-s)^{1/4}\}$. Firstly, recall the equation of $\xi_{\alpha,\varepsilon}$ in \eqref{eq-VP-point} and the uniform bound of $\vert\eta_{\alpha,\varepsilon}\vert$ in \eqref{eq-priori-1}, we have for all $\alpha$ and $0\le s<t\le T$ with $t-s\le \widetilde{\varepsilon}_0\varepsilon$
\begin{equation}\label{eq-pf-lem44-1}
\vert\xi_{\alpha,\varepsilon}(t)-\xi_{\alpha,\varepsilon}(s)\vert\le \frac{\widetilde{C}(t-s)}{\varepsilon}\le \widetilde{C}\widetilde{\varepsilon}_0.
\end{equation}
Recall that $M$ is the constant given in \eqref{eq-priori-positive-distance}, we choose $\widetilde{K}_0$ large enough such that
\begin{equation}\label{eq-choice-tilde-K0}
\max\{4\widetilde{C}\widetilde{\varepsilon}_0,16\widetilde{K}_0^{-1}\}<M.
\end{equation}
On the one hand, by \eqref{eq-priori-positive-distance}, \eqref{eq-pf-lem44-1}  and \eqref{eq-choice-tilde-K0}, we have for all $t-s\le \widetilde{\varepsilon}_0\varepsilon$, $\alpha\ne\beta$
\begin{equation}\label{eq-pf-lem44-2}
\vert\xi_{\alpha,\varepsilon}(t)-\xi_{\beta,\varepsilon}(s)\vert\ge\vert\xi_{\alpha,\varepsilon}(s)-\xi_{\beta,\varepsilon}(s)\vert-\vert\xi_{\alpha,\varepsilon}(t)-\xi_{\alpha,\varepsilon}(s)\vert\ge\frac{3M}{4}.
\end{equation}
On the other hand, by Lemma~\ref{lem-min-continuity-point-charge} and \eqref{eq-choice-tilde-K0}, we have for all $t-s\le \widetilde{\varepsilon}_0$ and for each $\alpha=1,\dots,N$,
\begin{equation}\label{eq-pf-lem44-3}
\min_{\beta}\vert\xi_{\alpha,\varepsilon}(t)-\xi_{\beta,\varepsilon}(s)\vert\le g(t,s,\varepsilon)\le \widetilde{K}_0^{-1}<M/2.
\end{equation}
\eqref{eq-pf-lem44-2} together with \eqref{eq-pf-lem44-3} implies that: for all $0<\varepsilon<\widetilde{\varepsilon}_0$ and $0\le s<t\le T$ with $t-s\le \widetilde{\varepsilon}_0\varepsilon$
\begin{equation}\label{eq-pf-lem44-4}
\vert\xi_{\alpha,\varepsilon}(t)-\xi_{\alpha,\varepsilon}(s)\vert\le g(t,s,\varepsilon).
\end{equation}
Now we need to iterate this estimate to obtain that it holds for $t-s\le \Delta_{k}(\varepsilon)$, where $\Delta_{k}(\varepsilon)$ is a sequence satisfying $\Delta_{k}(\varepsilon)<\Delta_{k+1}(\varepsilon)$ and $\lim_{k\to\infty}\Delta_{k}(\varepsilon)=\widetilde{\varepsilon}_0$ for all $\varepsilon>0$.  The iteration mainly consists of two steps:
\begin{itemize}
\item \eqref{eq-pf-lem44-4} holds for $t-s\le \Delta_{k}(\varepsilon)$ $\Longrightarrow$ \eqref{eq-pf-lem44-2} holds for $t-s\le \Delta_{k+1}(\varepsilon)$.
\item \eqref{eq-pf-lem44-2} for $t-s\le \Delta_{k+1}(\varepsilon)$ together with \eqref{eq-pf-lem44-3} for $t-s\le \widetilde{\varepsilon}_0$ $\Longrightarrow$ \eqref{eq-pf-lem44-4} holds for $t-s\le \Delta_{k+1}(\varepsilon)$.
\end{itemize}

Concretely, we define $\Delta_{k}=\Delta_{k+1}(\varepsilon):=\widetilde{\varepsilon}_0\varepsilon^{m^{-k}}$ with $m=\frac{4}{3}$. Assume there exists $k\ge 0$ such that \eqref{eq-pf-lem44-4} holds for $t-s\le \Delta_{k}$, the iteration is constructed as follows.

\noindent{\bf Step 1.} let $t-s\le \Delta_{k+1}$, we define $t_0=s$, $t_i=s+i\Delta_{k}$ for $i\le n_k:=\lfloor\Delta_{k+1}/ \Delta_{k}\rfloor$, $t_{n_k+1}=t$. By \eqref{eq-pf-lem44-4} we have
\begin{align*}
\vert\xi_{\alpha,\varepsilon}(t)-\xi_{\alpha,\varepsilon}(s)\vert&\le\sum_{i=0}^{n_k}g(t_{i+1},t_i,\varepsilon)\\
&\le (n_k+1)\widetilde{K}_{0}\sqrt{\Delta_{k}+\varepsilon}\\
&\le\left(\varepsilon^{m^{-(k+1)}-m^{-k}}+1\right)\widetilde{K}_{0}\sqrt{\widetilde{\varepsilon}_0\varepsilon^{m^{-k}}+\varepsilon}\\
&\le\left(\varepsilon^{m^{-(k+1)}-\frac{m^{-k}}{2}}+\varepsilon^{\frac{m^{-k}}{2}}\right)2\widetilde{K}_{0}\widetilde{\varepsilon}_0^{\frac{1-m^{-k}}{2}}.
\end{align*}
\begin{itemize}
\item If $k=0$, then by the assumptions $m=\frac{4}{3}$, $\widetilde{\varepsilon}_0=\widetilde{K}_0^{-16}$ and \eqref{eq-choice-tilde-K0}, we have
\begin{align*}
\vert\xi_{\alpha,\varepsilon}(t)-\xi_{\alpha,\varepsilon}(s)\vert&\le\left(\varepsilon^{\frac{1}{m}-\frac{1}{2}}+\varepsilon^{\frac{1}{2}}\right)2\widetilde{K}_{0}\le 4\widetilde{K}_{0}^{-1}<M/4.
\end{align*}
\item If $k\ge 1$, then by the assumptions $m=\frac{4}{3}$, $\widetilde{\varepsilon}_0=\widetilde{K}_0^{-16}$ and \eqref{eq-choice-tilde-K0}, we have
\begin{align*}
\vert\xi_{\alpha,\varepsilon}(t)-\xi_{\alpha,\varepsilon}(s)\vert&\le 4\widetilde{K}_{0}\widetilde{\varepsilon}_0^{\frac{1-m^{-k}}{2}}\le 4\widetilde{K}_{0}^{-1}<M/4.
\end{align*}
\end{itemize}
Hence we have $\vert\xi_{\alpha,\varepsilon}(t)-\xi_{\alpha,\varepsilon}(s)\vert<M/4$ holds for $t-s\le \Delta_{k+1}$.  By \eqref{eq-priori-positive-distance} and the triangle inequality, we have that \eqref{eq-pf-lem44-2} holds for $t-s\le \Delta_{k+1}$. 

\noindent{\bf Step 2.} By the conclusion of Step 1, \eqref{eq-pf-lem44-2} holds for $t-s\le \Delta_{k+1}$. Together with \eqref{eq-pf-lem44-3} for $t-s\le \widetilde{\varepsilon}_0$, we have immediately that \eqref{eq-pf-lem44-4} holds for $t-s\le \Delta_{k+1}$. 

Since \eqref{eq-pf-lem44-4} holds for $k=0$, the iteration can begin. By induction, we deduce that: for all $0<\varepsilon<\widetilde{\varepsilon}_0$, $k\ge0$ and $0\le s<t\le T$ with $t-s\le \widetilde{\varepsilon}_0\varepsilon^{m^{-k}}$
\begin{equation*}
\vert\xi_{\alpha,\varepsilon}(t)-\xi_{\alpha,\varepsilon}(s)\vert\le g(t,s,\varepsilon).
\end{equation*}
Notice $\varepsilon^{m^{-k}}\to 1$ as $k\to\infty$, hence the inequlaity holds also for $t-s\le \widetilde{\varepsilon}_0$. The proof is complete.
\end{proof}

Based on Lemma~\ref{lem-small-time-continuity-point-charge}, we obtain finally the time equi-continuity of $\rho_{\varepsilon}$ and $\xi_{\alpha,\varepsilon}$.
\begin{Proposition}\label{prn-continuity-point-charge}
Let $T>0$. There exists $\widetilde{K}> 1$ and $\widetilde{\varepsilon}_0>0$ depending only on $N,T,K_0,K_1$, such that for all $0< \varepsilon < \widetilde{\varepsilon}_0$ and for all  $0\le s < t\le T$
\begin{equation*}
\|\rho_{\varepsilon}(t)-\rho_{\varepsilon}(s)\|_{\dot{W}^{-2,1}}+\sum_{\alpha}\vert\xi_{\alpha,\varepsilon}(t)-\xi_{\alpha,\varepsilon}(s)\vert\le\widetilde{K}\min\{\sqrt{t-s+\varepsilon},(t-s)^{1/4}\}.
\end{equation*}
\end{Proposition}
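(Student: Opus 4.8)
The plan is to combine the small-time estimate for the point charges from Lemma~\ref{lem-small-time-continuity-point-charge} with a separate small-time estimate for $\|\rho_\varepsilon(t)-\rho_\varepsilon(s)\|_{\dot W^{-2,1}}$, and then bootstrap from the uniform short-time window to arbitrary $0\le s<t\le T$ by a covering/telescoping argument. First I would fix $\widetilde K_0,\widetilde\varepsilon_0$ as in Lemma~\ref{lem-small-time-continuity-point-charge}, so that for $0<\varepsilon<\widetilde\varepsilon_0$ and $t-s\le\widetilde K_0^{-4}$ one already has $\sum_\alpha|\xi_{\alpha,\varepsilon}(t)-\xi_{\alpha,\varepsilon}(s)|\le\widetilde K_0\min\{\sqrt{t-s+\varepsilon},(t-s)^{1/4}\}$.

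For the density, I would test the weak formulation of Proposition~\ref{prn-weak-form} against $\Phi\in C_c^\infty(\mathbb R^2)$ with $\|\Phi\|_{\dot W^{2,\infty}}\le1$, exactly as in the proof of Lemma~\ref{lem-min-continuity-point-charge}: the quadratic form term is bounded by $C\|\nabla^2\Phi\|_\infty(t-s)\sup_\tau(\|\rho_\varepsilon\|_1^2+\|\rho_\varepsilon\|_1)$ since $|H_\Phi|\le C\|\nabla^2\Phi\|_\infty$; the kinetic stress term $\int\nabla^2\Phi:\int v^\perp\otimes v f_\varepsilon$ and the point-charge stress term $\sum_\alpha\nabla^2\Phi(\xi_{\alpha,\varepsilon}):\eta_{\alpha,\varepsilon}^\perp\otimes\eta_{\alpha,\varepsilon}$ are bounded by $C(t-s)$ using \eqref{eq-priori-1}; and the remainder obeys $|R_\varepsilon(t)-R_\varepsilon(s)|\le C\min\{\varepsilon,(t-s)^{1/4}\}$ by \eqref{eq-R-varepsilon-est-2}. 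Since $\sum_\alpha|\Phi(\xi_{\alpha,\varepsilon}(t))-\Phi(\xi_{\alpha,\varepsilon}(s))|\le\|\nabla\Phi\|_\infty\sum_\alpha|\xi_{\alpha,\varepsilon}(t)-\xi_{\alpha,\varepsilon}(s)|$ is already controlled by the charge estimate on the short window, subtracting it and taking the supremum over such $\Phi$ gives, for $t-s\le\widetilde K_0^{-4}$,
\begin{equation*}
\|\rho_\varepsilon(t)-\rho_\varepsilon(s)\|_{\dot W^{-2,1}}\le C\bigl((t-s)+\min\{\varepsilon,(t-s)^{1/4}\}+\widetilde K_0\min\{\sqrt{t-s+\varepsilon},(t-s)^{1/4}\}\bigr)\le C'\min\{\sqrt{t-s+\varepsilon},(t-s)^{1/4}\},
\end{equation*}
using $t-s\le1$ so that $(t-s)\le(t-s)^{1/4}$ and $\min\{\varepsilon,(t-s)^{1/4}\}\le(t-s)^{1/4}$ and also $\le\sqrt{t-s+\varepsilon}$.

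It then remains to upgrade from the short window $t-s\le\widetilde K_0^{-4}=:\Delta$ to general $0\le s<t\le T$. For the Hölder-$\tfrac14$ part this is routine: partition $[s,t]$ into $\lceil(t-s)/\Delta\rceil\le 1+T\Delta^{-1}$ subintervals of length $\le\Delta$ and sum, using the triangle inequality in $\dot W^{-2,1}$ and in $\mathbb R^2$; since on each piece the increment is $\le C(t-s)_{\rm piece}^{1/4}\le C\Delta^{1/4}$ and the pieces also telescope when one instead wants a bound linear in $t-s$ on a fixed-length window, one gets a constant $\widetilde K$ depending only on $N,T,K_0,K_1$ with $\|\rho_\varepsilon(t)-\rho_\varepsilon(s)\|_{\dot W^{-2,1}}+\sum_\alpha|\xi_{\alpha,\varepsilon}(t)-\xi_{\alpha,\varepsilon}(s)|\le\widetilde K(t-s)^{1/4}$ for all $0\le s<t\le T$ (for $t-s\ge\Delta$ this is automatic from the a priori bounds \eqref{eq-priori-1} and $\|\rho_\varepsilon\|_1\le C$, possibly enlarging $\widetilde K$). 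For the $\sqrt{t-s+\varepsilon}$ part one only needs the bound on a single window $t-s\le\Delta$, which is already established, and for $t-s>\Delta$ the quantity $\sqrt{t-s+\varepsilon}\ge\sqrt\Delta$ is bounded below by a constant while the left side is bounded above by a constant, so enlarging $\widetilde K$ covers it. Taking $\widetilde K=\max$ of the constants produced and $\varepsilon_0=\widetilde\varepsilon_0$ completes the proof.

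The main obstacle is making sure all constants genuinely depend only on $N,T,K_0,K_1$ and not on $\varepsilon$; the delicate point is that the density estimate on the short window inherits the factor $\widetilde K_0\min\{\sqrt{t-s+\varepsilon},(t-s)^{1/4}\}$ from the point-charge motion through the term $\sum_\alpha(\Phi(\xi_{\alpha,\varepsilon}(t))-\Phi(\xi_{\alpha,\varepsilon}(s)))$, and one must check this does not blow up when $\|\Phi\|_{\dot W^{2,\infty}}\le1$ forces $\|\nabla\Phi\|_\infty$ to be merely $O(1)$ rather than small — which is fine, since the charge increment is itself already $O(\min\{\sqrt{t-s+\varepsilon},(t-s)^{1/4}\})$. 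A secondary bookkeeping point is the interplay of the two competing bounds $\sqrt{t-s+\varepsilon}$ versus $(t-s)^{1/4}$ in the telescoping step; one handles each separately and takes the $\min$ at the end.
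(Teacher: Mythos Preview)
Your proposal is correct and follows essentially the same route as the paper. The paper's proof is much terser: it simply cites the inequality \eqref{eq-lem-min-continuity-point-charge-1} already derived inside the proof of Lemma~\ref{lem-min-continuity-point-charge}, moves the $\sum_\alpha\Phi(\xi_{\alpha,\varepsilon}(t))-\Phi(\xi_{\alpha,\varepsilon}(s))$ term to the other side, invokes Lemma~\ref{lem-small-time-continuity-point-charge}, and appeals to the uniform boundedness of $\xi_{\alpha,\varepsilon}$ from \eqref{eq-priori-1} for the passage from the short window $t-s\le\widetilde K_0^{-4}$ to arbitrary $0\le s<t\le T$---exactly the ingredients you identify, with your telescoping argument making the last step explicit.
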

\begin{proof}
Applying Lemma~\ref{lem-small-time-continuity-point-charge} for $t-s\le\widetilde{\varepsilon}_0$ to obtain 
\begin{equation*}
\sum_{\alpha}\vert\xi_{\alpha,\varepsilon}(t)-\xi_{\alpha,\varepsilon}(s)\vert\le N\widetilde{K}_0\min\{\sqrt{t-s+\varepsilon},(t-s)^{1/4}\}.
\end{equation*}
For $t-s>\widetilde{\varepsilon}_0$, notice $\xi_{\alpha,\varepsilon}$ is uniformly bounded by \eqref{eq-priori-1}, we have 
\begin{equation*}
\sum_{\alpha}\vert\xi_{\alpha,\varepsilon}(t)-\xi_{\alpha,\varepsilon}(s)\vert\le C\le C[\min(\widetilde{\varepsilon}_0^{1/2},\widetilde{\varepsilon}_0^{1/4})]^{-1}\min\{\sqrt{t-s+\varepsilon},(t-s)^{1/4}\}.
\end{equation*}
Hence, denote $\widetilde{K}_1=N\widetilde{K}_0+ C[\min(\widetilde{\varepsilon}_0^{1/2},\widetilde{\varepsilon}_0^{1/4})]^{-1}$, we have
\begin{equation}\label{eq-pf-prn-45-1}
\sum_{\alpha}\vert\xi_{\alpha,\varepsilon}(t)-\xi_{\alpha,\varepsilon}(s)\vert\le \widetilde{K}_1\min\{\sqrt{t-s+\varepsilon},(t-s)^{1/4}\}\quad \text{for all } 0\le s < t\le T.
\end{equation}

It follows from \eqref{eq-lem-min-continuity-point-charge-1} and \eqref{eq-pf-prn-45-1} that
\begin{align*}
&\Big\vert\int_{\mathbb{R}^2}\Phi(x)\rho_{\varepsilon}(t,x)\,\mathrm{d}x-\int_{\mathbb{R}^2}\Phi(x)\rho_{\varepsilon}(s,x)\,\mathrm{d}x\Big\vert\\
&\le\sum_{\alpha}\vert\Phi(\xi_{\alpha,\varepsilon}(t))-\Phi(\xi_{\alpha,\varepsilon}(s))\vert+C\|\nabla_x^2\Phi\|_{\infty}(t-s)+C\|\nabla_x\Phi\|_{\infty}\min\{\varepsilon,(t-s)^{1/4}\}\\
&\le\|\Phi\|_{\dot{W}^{2,\infty}}\left(\sum_{\alpha}\vert\xi_{\alpha,\varepsilon}(t)-\xi_{\alpha,\varepsilon}(s)\vert+C\sum_{i=1,2}[\tilde{g}(t,s,\varepsilon)]^i\right)\\
&\le\|\Phi\|_{\dot{W}^{2,\infty}}\left(\widetilde{K}_1\tilde{g}(t,s,\varepsilon)+C\sum_{i=1,2}[\tilde{g}(t,s,\varepsilon)]^i\right),
\end{align*}
where $\tilde{g}(t,s,\varepsilon)=\min\{\sqrt{t-s+\varepsilon},(t-s)^{1/4}\}$. Hence
\begin{equation}\label{eq-pf-prn-45-2}
\|\rho_{\varepsilon}(t)-\rho_{\varepsilon}(s)\|_{\dot{W}^{-2,1}}\le\left(\widetilde{K}_1+C+CT^{1/4}\right)\min\{\sqrt{t-s+\varepsilon},(t-s)^{1/4}\}\quad \text{for all } 0\le s < t\le T.
\end{equation}
Adding \eqref{eq-pf-prn-45-1} and \eqref{eq-pf-prn-45-2}, the conclusion follows with $\widetilde{K}=2\widetilde{K}_1+C+CT^{1/4}$.
\end{proof}

\section{Proof of Theorem~\ref{thm-main}}\label{sec-proof-of-main-thm}

We define the radial moment density as
\begin{equation*}
m_{k,\varepsilon}(t,x,\omega)=\int_{0}^{\infty}r^kf_{\varepsilon}(t,x,r\omega)\,\mathrm{d}r.
\end{equation*}
Notice the term in Proposition~\ref{prn-weak-form}
\begin{equation*}
\int_0^t\int_{\mathbb{R}^2}\nabla_x^2\Phi(s,x):\int_{\mathbb{R}^2}v^{\perp}\otimes vf_{\varepsilon}(s,x,v)\,\mathrm{d}v\,\mathrm{d}s\,\mathrm{d}x=\int_0^t\int_{\mathbb{R}^2}\int_{\mathbb{S}^1}\widetilde{\Phi}(s,x,\omega)m_{3,\varepsilon}(s,x,\omega)\,\mathrm{d}\omega\,\mathrm{d}s\,\mathrm{d}x,
\end{equation*}
where $\widetilde{\Phi}(s,x,\omega)=\nabla_x^2\Phi(s,x):\omega^{\perp}\otimes \omega\in C_c^{\infty}(\mathbb{R}_+\times\mathbb{R}^2\times\mathbb{S}^1)$.

Now we obtain the limit points by the compactness arguments stated in \cite{GS99,Mio19}.

{\bf Convergence of $f_{\varepsilon}$:} By the $L^1$-bound \eqref{eq-Lp-conse}, there exist a subsequence such that $f_{\varepsilon_n}\to f$ weak-* in $L^{\infty}(\mathbb{R}_{+},\mathcal{M}_+(\mathbb{R}^2\times\mathbb{R}^2))$. Moreover, $\nabla_{v}\cdot(v^{\perp}f)=0$ in the distributional sense by the lemmas in \cite[Lem.3.3]{GS99}, \cite[Lem.2.16]{Mio19}, which implies
\begin{equation*}
\int_0^t\int_{\mathbb{R}^2}\int_{\mathbb{S}^1}\widetilde{\Phi}(s,x,\omega)\,m_{3}(s,\mathrm{d}x,\mathrm{d}\omega)\,\mathrm{d}s=\int_0^t\int_{\mathbb{R}^2}\nabla_x^2\Phi(s,x):\int_{\mathbb{R}^2}v^{\perp}\otimes v\,f(s,\mathrm{d}x,\mathrm{d}v)\,\mathrm{d}s\equiv0,
\end{equation*}
where the measure $m_3(t)$ is defined by
\begin{equation*}
\iint_{\mathbb{R}^2\times\mathbb{S}^1}\Psi(x,\omega)\,m_{3}(t,\mathrm{d}x,\mathrm{d}\omega)=\iint_{\mathbb{R}^2\times\mathbb{R}^2}\Psi(x,v/\vert v\vert)\vert v\vert^2\,f(t,\mathrm{d}x,\mathrm{d}v)\quad\forall\Psi\in C_0(\mathbb{R}^2\times\mathbb{S}^1).
\end{equation*}

{\bf Convergence of $\rho_{\varepsilon}$:} Notice by the uniform bound of $\iint\vert v\vert^2f_{\varepsilon}$ in \eqref{eq-priori-1}, we have $\rho_{\varepsilon_n}\to \rho=\int_{\mathbb{R}^2}\,f(\mathrm{d}v)$ weak-* in $L^{\infty}(\mathbb{R}_{+},\mathcal{M}_+(\mathbb{R}^2))$. Recall Proposition~\ref{prn-continuity-point-charge}, $\rho\in C^{1/2}([0,T],W^{-2,1}(\mathbb{R}^2))$ for all $T>0$.

{\bf Convergence of $\xi_{\alpha,\varepsilon}$:} Recall Proposition~\ref{prn-continuity-point-charge} again and by the Arzel\`a-Ascoli theorem, there exists a subsequence, still denoted as $\xi_{\alpha,\varepsilon_n}$, such that $\xi_{\alpha,\varepsilon_n}\to\xi_{\alpha}$ in $C^{r}([0,T],\mathbb{R}^2)$, $\forall r\in[0,1/4)$ and $\xi_{\alpha}\in C^{1/2}([0,T],\mathbb{R}^2)$, $\forall T>0$, which implies
\begin{equation*}
\bar{\delta}_{\varepsilon_n}=\sum_{\alpha}\delta_{\xi_{\alpha,\varepsilon_n}}\to\bar{\delta}=\sum_{\alpha}\delta_{\xi_{\alpha}}\text{ weak-* in }L^{\infty}(\mathbb{R}_{+},\mathcal{M}_+(\mathbb{R}^2)).
\end{equation*}

{\bf Convergence of $\eta_{\alpha,\varepsilon}$:} Notice by \eqref{eq-priori-1} again, $\vert\eta_{\alpha,\varepsilon_n}\vert^2$ is uniformly bounded in $L^{\infty}(\mathbb{R}_{+})$. Hence there exists a subsequence such that
\begin{equation*}
\eta_{\alpha,\varepsilon_n}^{\perp}\otimes\eta_{\alpha,\varepsilon_n}\to \mathbf{M}^{\alpha}\text{ weak-* in }L^{\infty}(\mathbb{R}_{+},\mathbb{R}^{2\times 2}),
\end{equation*}
where $\mathbf{M}^{\alpha}$ satisfies $\mathbf{M}_{11}^{\alpha}=-\mathbf{M}_{22}^{\alpha}$ and $\mathbf{M}_{12}^{\alpha}=-\mathbf{M}_{21}^{\alpha}$.

{\bf The existence of defect measure:} Notice by \eqref{eq-priori-1}
\begin{equation*}
\sup_{t\ge0,\varepsilon>0}\int_{\mathbb{R}^2}\int_{\mathbb{S}^1}m_{3,\varepsilon}(s,x,\omega)\,\mathrm{d}\omega\,\mathrm{d}x=\sup_{t\ge0,\varepsilon>0}\iint_{\mathbb{R}^2\times\mathbb{R}^2}\vert v\vert^2f_{\varepsilon}(s,x,v)\,\mathrm{d}v\,\mathrm{d}x\le C,
\end{equation*}
there exist a subsequence, still denoted as $m_{3,\varepsilon_n}$, such that $m_{3,\varepsilon_n}\to \mu$ weak-* in $L^{\infty}(\mathbb{R}_{+},\mathcal{M}(\mathbb{R}^2\times\mathbb{S}^1))$. Denote $\mu_{0}:=\mu-m_3$, then $\mu_{0}\in L^{\infty}(\mathbb{R}_{+},\mathcal{M}_+(\mathbb{R}^2\times\mathbb{S}^1))$ by the arguments of \cite[p.802, Proof of Thm.A]{GS99}.

The compactness statements above imply that 
\begin{align*}
&\int_{\mathbb{R}^2}\Phi(t,x)\big(\rho_{\varepsilon_n}(t,x)\,\mathrm{d}x+\bar{\delta}_{\varepsilon_n}(t,\mathrm{d}x)\big)-\int_{\mathbb{R}^2}\Phi(0,x)\big(\rho_{\varepsilon_n}(0,x)\,\mathrm{d}x+\bar{\delta}_{\varepsilon_n}(0,\mathrm{d}x)\big)\\
&-\int_0^t\int_{\mathbb{R}^2}\partial_t\Phi(s,x)\big(\rho_{\varepsilon_n}(s,x)\,\mathrm{d}x+\bar{\delta}_{\varepsilon_n}(s,\mathrm{d}x)\big)\,\mathrm{d}s-\int_0^t\int_{\mathbb{R}^2}\nabla_x^2\Phi(s,x):\int_{\mathbb{R}^2}v^{\perp}\otimes vf_{\varepsilon_n}(s,x,v)\,\mathrm{d}v\,\mathrm{d}s\,\mathrm{d}x\\
&\quad-\sum_{\alpha}\int_0^t\nabla_x^2\Phi(s,\xi_{\alpha,\varepsilon}(s)):\eta_{\alpha,\varepsilon}^{\perp}\otimes\eta_{\alpha,\varepsilon}(s)\,\mathrm{d}s-R_{\varepsilon_n}(t)
\end{align*}
converges to
\begin{align*}
&\int_{\mathbb{R}^2}\Phi(t,x)\big(\rho(t,\mathrm{d}x)+\bar{\delta}(t,\mathrm{d}x)\big)-\int_{\mathbb{R}^2}\Phi(0,x)\big(\rho(0,\mathrm{d}x)+\bar{\delta}(0,\mathrm{d}x)\big)\\
&-\int_0^t\int_{\mathbb{R}^2}\partial_t\Phi(s,x)\big(\rho(s,\mathrm{d}x)+\bar{\delta}(s,\mathrm{d}x)\big)\,\mathrm{d}s-\int_0^t\int_{\mathbb{R}^2}\nabla_x^2\Phi(s,x):\int_{\mathbb{S}^1}\omega^{\perp}\otimes\omega\,\mu_0(s,\mathrm{d}x,\mathrm{d}\omega)\,\mathrm{d}s\\
&\quad-\sum_{\alpha}\int_0^t\nabla_x^2\Phi(s,\xi_{\alpha}(s)):\mathbf{M}^{\alpha}(s)\,\mathrm{d}s.
\end{align*}

Finally, we prove
\begin{align*}
\int_0^t\mathcal{H}_{\Phi(s,\cdot)}[\rho_{\varepsilon_n}(s,\cdot)+\bar{\delta}_{\varepsilon_n}(s,\cdot),\rho_{\varepsilon_n}(s,\cdot)+\bar{\delta}_{\varepsilon_n}(s,\cdot)]\,\mathrm{d}s
\end{align*}
converges to
\begin{align*}
\int_0^t\mathcal{H}_{\Phi(s,\cdot)}[\rho(s,\cdot)+\bar{\delta}(s,\cdot),\rho(s,\cdot)+\bar{\delta}(s,\cdot)]\,\mathrm{d}s
\end{align*}
by the following lemma, which has been established in \cite{Del91,Sch95,Mio19}.
\begin{Lemma}\label{lem-convergence-quadratic-form}
Let $\mu_n^1,\mu_n^2$ be sequences in $L^{\infty}(\mathbb{R}_+,\mathcal{M}_+(\mathbb{R}^2))$ such that $\mu_n^1,\mu_n^2\to\mu^1,\mu^2$ weak-* in $L^{\infty}(\mathbb{R}_+,\mathcal{M}_+(\mathbb{R}^2))$, respectively. Assume $\mu_n^1,\mu_n^2$ are equicontinuous in time with values in some negative Sobolev space $W^{-s,q}(\mathbb{R}^2)$ and satisfy the non-concentration condition 
\begin{align*}
&\lim_{\epsilon\to 0^+}\sup_{0\le t\le T}\sup_{\vert x_0\vert\le R_1}\sup_n\mu_n^1(t,B(x_0,\epsilon R_2))=0\quad\forall T,R_1,R_2>0,\\
&\sup_{t\ge 0}\iint_{\vert x-y\vert\le R}\vert H_{\Phi(t)}(x,y)\vert\mu_n^2(t,\mathrm{d}x)\mu_n^2(t,\mathrm{d}y)=0\quad\forall\Phi\in C_c^{\infty}(\mathbb{R}_+\times\mathbb{R}^2),\text{ for some }R>0.
\end{align*}
Assume also $\mu^1,\mu^2$ satisfy the non-concentration condition.

Then $\int_0^t\mathcal{H}_{\Phi(s)}[\mu_n^1(s)+\mu_n^2(s),\mu_n^1(s)+\mu_n^2(s)]\,\mathrm{d}s$ converges to $\int_0^t\mathcal{H}_{\Phi(s)}[\mu^1(s)+\mu^2(s),\mu^1(s)+\mu^2(s)]\,\mathrm{d}s$ for all $t\ge 0$.
\end{Lemma}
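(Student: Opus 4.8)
The strategy is the classical splitting of the kernel $H_\Phi$ into a near-diagonal piece and a far-from-diagonal piece, which goes back to Delort and Schochet. Fix $\Phi\in C_c^\infty(\mathbb{R}_+\times\mathbb{R}^2)$, let $\chi_\delta(z)$ be a smooth cutoff equal to $1$ for $|z|\le\delta$ and supported in $|z|\le 2\delta$, and write $H_\Phi = \chi_\delta(x-y)H_\Phi(x,y) + (1-\chi_\delta(x-y))H_\Phi(x,y) =: H_\Phi^{\rm near,\delta} + H_\Phi^{\rm far,\delta}$. Expanding $\mathcal{H}_\Phi[\mu_n^1+\mu_n^2,\mu_n^1+\mu_n^2] = \mathcal{H}_\Phi[\mu_n^1,\mu_n^1] + 2\mathcal{H}_\Phi[\mu_n^1,\mu_n^2] + \mathcal{H}_\Phi[\mu_n^2,\mu_n^2]$, and likewise for the limit, I would treat the three types of terms separately. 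For the cross term $\mathcal{H}_\Phi[\mu_n^1,\mu_n^2]$ and the pure term $\mathcal{H}_\Phi[\mu_n^1,\mu_n^1]$, the far piece $H_\Phi^{\rm far,\delta}$ is, for each fixed $\delta>0$, a bounded continuous function on $\mathbb{R}^2\times\mathbb{R}^2$ vanishing at infinity (it is supported away from the diagonal where $H_\Phi$ is singular), so weak-$*$ convergence of $\mu_n^i\otimes\mu_n^j$ to $\mu^i\otimes\mu^j$ — which follows from the individual weak-$*$ convergences together with the time-equicontinuity that upgrades the product convergence (one tests against $H_\Phi^{\rm far,\delta}(x,y)$, a fixed test function in the relevant product space, and integrates in $s\in[0,t]$, using dominated convergence in $s$ since the integrands are uniformly bounded by $C\|\nabla^2\Phi\|_\infty (\sup_n\|\mu_n^1\|_{\mathcal M})(\sup_n\|\mu_n^2\|_{\mathcal M})$) — gives convergence of the far piece to its limit.

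It remains to show the near-diagonal contributions are uniformly small as $\delta\to0$, uniformly in $n$. Here one uses $|H_\Phi(x,y)|\le C\|\nabla^2\Phi\|_\infty$ together with the structure of the two hypotheses. For any term involving $\mu_n^1$ — i.e.\ $\mathcal{H}_\Phi^{\rm near,\delta}[\mu_n^1,\mu_n^1]$ and $\mathcal{H}_\Phi^{\rm near,\delta}[\mu_n^1,\mu_n^2]$ — one covers the relevant bounded region $\{x:\ \mu_n^i\ \text{mass matters}\}$ by finitely many balls $B(x_k,\delta)$ (the number being $O(\delta^{-2})$ on the fixed compact set where $\Phi$ lives, but one only needs the total $\mu_n^1$-mass, which is $O(1)$, distributed among them) and bounds
\begin{equation*}
\Big|\iint_{|x-y|\le 2\delta} H_\Phi\, \mu_n^1(\mathrm dx)\,\nu_n(\mathrm dy)\Big| \le C\|\nabla^2\Phi\|_\infty \sup_{x_0}\mu_n^1\big(B(x_0,2\delta)\big)\cdot \|\nu_n\|_{\mathcal M},
\end{equation*}
which tends to $0$ as $\delta\to0$ uniformly in $n$ by the first (non-concentration) hypothesis, where $\nu_n$ is $\mu_n^1$ or $\mu_n^2$. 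For the pure $\mu_n^2$ term $\mathcal{H}_\Phi^{\rm near,\delta}[\mu_n^2,\mu_n^2]$, the non-concentration argument is unavailable (the $\delta$-masses of $\mu_n^2$ do not shrink — think of $\bar\delta_{\varepsilon_n}$), so instead one invokes the second hypothesis directly: choosing $\delta < R/2$, the near-diagonal integral $\iint_{|x-y|\le 2\delta}H_\Phi\,\mu_n^2\,\mu_n^2$ is part of $\iint_{|x-y|\le R}H_\Phi\,\mu_n^2\,\mu_n^2$, which vanishes identically; the same holds in the limit since $\mu^2=\sum_\alpha\delta_{\xi_\alpha}$ inherits the property that distinct points stay at distance $\ge M$ apart (from \eqref{eq-priori-positive-distance}), so $H_\Phi$ evaluated on its diagonal is zero by the convention $H_\Phi(x,x)=0$ and off-diagonal pairs are at distance $>R$.

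The main obstacle — and the only place real care is needed — is the cross term $\mathcal{H}_\Phi[\mu_n^1,\mu_n^2]$, because there the near-diagonal mass of $\mu_n^2$ does not shrink while the kernel is genuinely singular along the diagonal; the saving grace is that the \emph{other} factor $\mu_n^1$ does satisfy non-concentration, so one puts the supremum of the small ball mass on $\mu_n^1$ and the bounded total mass on $\mu_n^2$, as in the displayed estimate above. Assembling: given $\eta>0$, pick $\delta$ small so that all near-diagonal pieces (for every $n$ and in the limit) are below $\eta/3$ in absolute value, then let $n\to\infty$ so the far piece converges, and finally let $\eta\to0$; a standard $3\varepsilon$-argument gives $\int_0^t\mathcal{H}_{\Phi(s)}[\mu_n^1+\mu_n^2,\mu_n^1+\mu_n^2]\,\mathrm ds \to \int_0^t\mathcal{H}_{\Phi(s)}[\mu^1+\mu^2,\mu^1+\mu^2]\,\mathrm ds$ for every $t\ge0$. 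Since this is exactly the statement proved in \cite{Del91,Sch95,Mio19}, I would state the proof in this compressed form and refer the reader there for the routine measure-theoretic verifications.
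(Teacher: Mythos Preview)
Your proposal is correct and follows exactly the Delort--Schochet near/far-diagonal splitting that the paper has in mind; in fact the paper's own proof is a single sentence deferring to ``the arguments between \cite[Lem.~3.2]{Sch95} and \cite[Thm.~3.3]{Sch95}'', so your sketch already gives considerably more detail than the paper provides. The only place to be slightly more careful in a full write-up is the passage from weak-$*$ convergence in $L^\infty_t\mathcal{M}_x$ to pointwise-in-$s$ vague convergence (needed for the product to converge at each time), which is where the time-equicontinuity hypothesis is genuinely used via an Arzel\`a--Ascoli argument---but you flag this correctly.
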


Let $\mu_n^1=\rho_{\varepsilon_n}$, $\mu_n^2=\bar{\delta}_{\varepsilon_n}$, then the time equicontinuity holds by Proposition~\ref{prn-continuity-point-charge}. The non-concentration conditions hold by \eqref{eq-estimate-non-concentration} and \eqref{eq-priori-positive-distance}. Hence the convergence stated above holds and the theorem is proved.

\begin{proof}[Proof of Lemma~\ref{lem-convergence-quadratic-form}]
It is proved directly by following the arguments between \cite[Lem.3.2]{Sch95} and \cite[Thm.3.3]{Sch95}. Indeed, note $H_{\Phi}(x,y)$ is smooth outside $x=y$, we split it as
\begin{equation*}
H_{\Phi}(x,y)=H_{\Phi}(x,y)\psi_{\epsilon}(\vert x-y\vert)+H_{\Phi}(x,y)[1-\psi_{\epsilon}(\vert x-y\vert)]=:H_{\Phi}^{\epsilon}(x,y)+\tilde{H}_{\Phi}^{\epsilon}(x,y)
\end{equation*}
where $\psi\in C_c^{\infty}(\mathbb{R}_{+},[0,1])$ such  that $\psi$ vanishes on  $[0,1]$ and  $\psi=1$ on $[2,+\infty)$ and set $\psi_{\epsilon}= \psi(\cdot/\epsilon)$. Then $H_{\Phi}^{\epsilon}(x,y)$ is smooth for all $x,y\in\mathbb{R}^2$ as long as $\epsilon>0$ and the support of $\tilde{H}_{\Phi}^{\epsilon}$ is contained in $\{(x,y)\in\mathbb{R}^4:\vert x-y\vert\le 2\epsilon\}$.

By the definition of $\mathcal{H}_{\Phi(s)}[\cdot,\cdot]$, we have
\begin{align*}
&\int_0^t\mathcal{H}_{\Phi(s)}[\mu_n^1(s)+\mu_n^2(s),\mu_n^1(s)+\mu_n^2(s)]\,\mathrm{d}s\\
&=\frac{1}{2}\int_0^t\iint_{\mathbb{R}^4}H_{\Phi}^{\epsilon}(x,y)[\mu_n^1(s,\mathrm{d}x)+\mu_n^2(s,\mathrm{d}x)][\mu_n^1(s,\mathrm{d}y)+\mu_n^2(s,\mathrm{d}y)]\,\mathrm{d}s,\\
&\quad+\frac{1}{2}\int_0^t\iint_{\mathbb{R}^4}\tilde{H}_{\Phi}^{\epsilon}(x,y)[\mu_n^1(s,\mathrm{d}x)+\mu_n^2(s,\mathrm{d}x)][\mu_n^1(s,\mathrm{d}y)+\mu_n^2(s,\mathrm{d}y)]\,\mathrm{d}s\\
&=:I_1^{\epsilon,n}+I_2^{\epsilon,n}.
\end{align*}

One the one hand, by \cite[Lem.3.2]{Sch95}, $\mu_n^1\mu_n^1,\mu_n^1\mu_n^2,\mu_n^2\mu_n^2\to\mu^1\mu^1,\mu^1\mu^2,\mu^2\mu^2$ weak-* in $L^{\infty}(\mathbb{R}_+,\mathcal{M}(\mathbb{R}^2))$, respectively, hence
\begin{align*}
\lim_{n\to\infty}I_1^{\epsilon,n}&=\frac{1}{2}\int_0^t\iint_{\mathbb{R}^4}H_{\Phi}^{\epsilon}(x,y)[\mu^1(s,\mathrm{d}x)+\mu^2(s,\mathrm{d}x)][\mu^1(s,\mathrm{d}y)+\mu^2(s,\mathrm{d}y)]\,\mathrm{d}s\\
&=\frac{1}{2}\int_0^t\iint_{\mathbb{R}^4}H_{\Phi}(x,y)[\mu^1(s,\mathrm{d}x)+\mu^2(s,\mathrm{d}x)][\mu^1(s,\mathrm{d}y)+\mu^2(s,\mathrm{d}y)]\,\mathrm{d}s-\tilde{I}_1^{\epsilon},
\end{align*}
where
\begin{equation*}
\tilde{I}_1^{\epsilon}=\frac{1}{2}\int_0^t\iint_{\mathbb{R}^4}\tilde{H}_{\Phi}^{\epsilon}(x,y)[\mu^1(s,\mathrm{d}x)+\mu^2(s,\mathrm{d}x)][\mu^1(s,\mathrm{d}y)+\mu^2(s,\mathrm{d}y)]\,\mathrm{d}s.
\end{equation*}
On the other hand, without loss of generality assuming the diameter of the support in $x$ of $\Phi$ is $R_1>0$, then the support of $\tilde{H}_{\Phi}^{\epsilon}$ is contained in $\{(x,y)\in\mathbb{R}^4:\vert x-y\vert\le 2\epsilon,\,\vert x\vert\le R_1+2\epsilon,\,\vert y\vert\le R_1+2\epsilon\}$, one has
\begin{align*}
\vert I_2^{\epsilon,n}\vert\le&\frac{1}{2}\int_0^t\iint_{\mathbb{R}^4}\vert\tilde{H}_{\Phi}^{\epsilon}(x,y)\vert\mu_n^1(s,\mathrm{d}x)\mu_n^1(s,\mathrm{d}y)\,\mathrm{d}s+\int_0^t\iint_{\mathbb{R}^4}\vert\tilde{H}_{\Phi}^{\epsilon}(x,y)\vert\mu_n^1(s,\mathrm{d}x)\mu_n^2(s,\mathrm{d}y)\,\mathrm{d}s\\
&+\frac{1}{2}\int_0^t\iint_{\vert x-y\vert\le 2\epsilon}\vert H_{\Phi}(x,y)\vert\mu_n^2(s,\mathrm{d}x)\mu_n^2(s,\mathrm{d}y)\,\mathrm{d}s\\
\le& t\|H_{\Phi}\|_{\infty}(\|\mu_n^1\|_{\mathcal{M}}+\|\mu_n^2\|_{\mathcal{M}})\sup_{0\le s\le t}\sup_{\vert x\vert\le R_1+2\epsilon}\sup_n\vert\mu_n^1\vert(s,B(x,2\epsilon))\\
&+t\sup_{0\le s\le t}\iint_{\vert x-y\vert\le 2\epsilon}\vert H_{\Phi(s)}(x,y)\vert\mu_n^2(s,\mathrm{d}x)\mu_n^2(s,\mathrm{d}y).
\end{align*}
Hence by the non-concentration condition, we have $\lim_{\epsilon\to 0}\sup_{n}\vert I_2^{\epsilon,n}\vert=0$, similarly $\lim_{\epsilon\to 0}\vert \tilde{I}_1^{\epsilon}\vert=0$, which finishes the proof.

\end{proof}

\section{Appendix}

\subsection{Proof of Proposition~\ref{prn-E0nu-to-E00}}

Let $\psi\in C_c^{\infty}(\mathbb{R}_{+},[0,1])$ such  that $\psi$ vanishes on  $[0,1]$ and  $\psi=1$ on $[2,+\infty)$ and set $\psi_{\epsilon}= \psi(\cdot/\epsilon)$, which converges to  $1$ almost  everywhere. Let $\Phi$ be a test function and define
\begin{align*}
&\Phi_{\epsilon}(t,x):=\Phi(t,x)\prod_{\alpha}\psi_{\epsilon}(\vert x-\xi_{\alpha}(t)\vert),\\
&\Phi_{\epsilon}^{\alpha}(t,x):=\Phi(t,x)\prod_{\beta:\beta\ne\alpha}\psi_{\epsilon}(\vert x-\xi_{\alpha}(t)\vert).
\end{align*}
Then it can be proved without difficulty by the fact that the estimate \eqref{eq-priori-positive-distance} holds for $\{\xi_{\alpha}\}$: for $\epsilon$ small enough, we have
\begin{align*}
&\Phi_{\epsilon}(t,\xi_{\alpha}(t))=0, \partial_{t}\Phi_{\epsilon}(t,\xi_{\alpha}(t))=0, \nabla\Phi_{\epsilon}(t,\xi_{\alpha}(t))=0\text{ for all }1\le\alpha\le N,\\ 
&\Phi_{\epsilon}^{\alpha}(t,\xi_{\beta}(t))=0, \partial_{t}\Phi_{\epsilon}^{\alpha}(t,\xi_{\beta}(t))=0, \nabla\Phi_{\epsilon}^{\alpha}(t,\xi_{\beta}(t))=0\text{ for all }1\le\beta\ne\alpha\le N.
\end{align*}

Notice $E\in L^{\infty}_{\rm{loc}}(L^{\infty})$ and $\frac{\rho}{\vert x-\xi_{\alpha}\vert}\in L^{1}_{\rm{loc}}$ by the assumption $\rho\in L^{\infty}_{\rm{loc}}L^{p}$ for $p>2$. Hence take $\Phi_{\epsilon}$ as test functions in \eqref{eq-weak-solution-mv-Euler-defect} with $\nu=0$, we obtain
\begin{align*}
&\frac{\mathrm{d}}{\mathrm{d}t}\int_{\mathbb{R}^2}\Phi_{\epsilon}(t,x)\rho(t,x)\,\mathrm{d}x
\\
&=\int_{\mathbb{R}^2}\partial_{t}\Phi_{\epsilon}(t,x)\rho(t,x)\,\mathrm{d}x+\int_{\mathbb{R}^2}\left(E^{\perp}(t,x)+\sum_{\alpha}\frac{(x-\xi_{\alpha})^{\perp}}{\vert x-\xi_{\alpha}\vert^2}\right)\cdot\nabla\Phi_{\epsilon}(t,x)\rho(t,x)\,\mathrm{d}x.
\end{align*}
We claim that the equation above converges to the first equation in \eqref{eq-Vortex-Wave} as $\epsilon\to0$.

A similar routine can prove that
\begin{align*}
&\frac{\mathrm{d}}{\mathrm{d}t}\int_{\mathbb{R}^2}\Phi_{\epsilon}^{\alpha}(t,x)\rho(t,x)\,\mathrm{d}x+\frac{\mathrm{d}}{\mathrm{d}t}\Phi_{\epsilon}^{\alpha}(t,\xi_{\alpha}(t))
\\
&=\int_{\mathbb{R}^2}\partial_{t}\Phi_{\epsilon}^{\alpha}(t,x)\rho(t,x)\,\mathrm{d}x+\partial_{t}\Phi_{\epsilon}^{\alpha}(t,\xi_{\alpha}(t))+\int_{\mathbb{R}^2}\left(E^{\perp}(t,x)+\sum_{\alpha}\frac{(x-\xi_{\alpha})^{\perp}}{\vert x-\xi_{\alpha}\vert^2}\right)\cdot\nabla\Phi_{\epsilon}^{\alpha}(t,x)\rho(t,x)\,\mathrm{d}x\\
&\quad+E^{\perp}(t,\xi_{\alpha}(t))\cdot\nabla\Phi_{\epsilon}^{\alpha}(t,\xi_{\alpha}(t))+\sum_{\alpha\ne\beta}\frac{(\xi_{\alpha}-\xi_{\beta})^{\perp}}{\vert\xi_{\alpha}-\xi_{\beta}\vert^2}\cdot\nabla\Phi_{\epsilon}^{\alpha}(t,\xi_{\alpha}(t))
\end{align*}
converges to
\begin{align*}
&\frac{\mathrm{d}}{\mathrm{d}t}\int_{\mathbb{R}^2}\Phi(t,x)\rho(t,x)\,\mathrm{d}x+\frac{\mathrm{d}}{\mathrm{d}t}\Phi(t,\xi_{\alpha}(t))
\\
&=\int_{\mathbb{R}^2}\partial_{t}\Phi(t,x)\rho(t,x)\,\mathrm{d}x+\partial_{t}\Phi(t,\xi_{\alpha}(t))+\int_{\mathbb{R}^2}\left(E^{\perp}(t,x)+\sum_{\alpha}\frac{(x-\xi_{\alpha})^{\perp}}{\vert x-\xi_{\alpha}\vert^2}\right)\cdot\nabla\Phi(t,x)\rho(t,x)\,\mathrm{d}x\\
&\quad+E^{\perp}(t,\xi_{\alpha}(t))\cdot\nabla\Phi(t,\xi_{\alpha}(t))+\sum_{\alpha\ne\beta}\frac{(\xi_{\alpha}-\xi_{\beta})^{\perp}}{\vert\xi_{\alpha}-\xi_{\beta}\vert^2}\cdot\nabla\Phi(t,\xi_{\alpha}(t)),
\end{align*}
which minus the first equation in \eqref{eq-Vortex-Wave} to obtain the second equation in \eqref{eq-Vortex-Wave}.

{\bf Proof of the claim.} By Lebesgue's dominated convergence theorem, $\int\Phi_{\epsilon}(t,x)\rho(t,x)\,\mathrm{d}x$ tends to  $\int\Phi(t,x)\rho(t,x)\,\mathrm{d}x$ as $\epsilon\to 0$.

By direct computation, we have
\begin{align*}
\int_{\mathbb{R}^2}\partial_{t}\Phi_{\epsilon}(t,x)\rho(t,x)\,\mathrm{d}x&=\int_{\mathbb{R}^2}\partial_{t}\Phi(t,x)\prod_{\alpha}\psi_{\epsilon}(\vert x-\xi_{\alpha}\vert)\rho(t,x)\,\mathrm{d}x\\
&\quad+\int_{\mathbb{R}^2}\frac{\Phi(t,x)}{\epsilon}\sum_{\alpha}\Big[\frac{\xi_{\alpha}-x}{\vert \xi_{\alpha}-x\vert}\cdot\dot{\xi}_{\alpha}\psi'(\vert x-\xi_{\alpha}\vert/\epsilon)\prod_{\beta:\beta\ne\alpha}\psi_{\epsilon}(\vert x-\xi_{\beta}\vert)\Big]\rho(t,x)\,\mathrm{d}x\\
&=:I_{\epsilon}^1+I_{\epsilon}^2.
\end{align*}
It is obvious that $I_{\epsilon}^1$ converges to $\int\partial_{t}\Phi(t,x)\rho(t,x)\,\mathrm{d}x$  as $\epsilon\to 0$. For $I_{\epsilon}^2$, since by the assumption $\|\dot{\xi}_{\alpha}\|_{\infty}\le C$, we have by the H\"older inequality
\begin{equation*}
\vert I_{\epsilon}^2\vert\le\frac{C}{\epsilon}\sum_{\alpha}\int_{\vert x-\xi_{\alpha}(t)\vert\le2\epsilon} \vert \rho(t,x)\vert\,\mathrm{d}x\le\frac{C}{\epsilon}\|\rho(t)\|_{p}\epsilon^{2-\frac{2}{p}}.
\end{equation*}

Now we prove the convergence of the nonlinear term. Notice
\begin{align*}
&\int_{\mathbb{R}^2}\Big(E^{\perp}(t,x)+\sum_{\alpha}\frac{(x-\xi_{\alpha}(t))^{\perp}}{\vert x-\xi_{\alpha}(t)\vert^{2}}\Big)\cdot\nabla\Phi_{\epsilon}(t,x)\rho(t,x)\,\mathrm{d}x\\
&=\int_{\mathbb{R}^2}\Big(E^{\perp}(t,x)+\sum_{\alpha}\frac{(x-\xi_{\alpha}(t))^{\perp}}{|x-\xi_{\alpha}(t)|^{2}}\Big)\cdot\nabla\Phi(t,x)\prod_{\alpha}\psi_{\epsilon}(\vert x-\xi_{\alpha}(t)\vert)\rho(t,x)\,\mathrm{d}x\\
&+\int_{\mathbb{R}^2}E^{\perp}(t,x)\cdot\nabla\Big(\prod_{\alpha}\psi_{\epsilon}(\vert x-\xi_{\alpha}(t)\vert)\Big)\Phi(t,x)\rho(t,x)\,\mathrm{d}x\\
&+\int_{\mathbb{R}^2}\Big(\sum_{\alpha}\frac{(x-\xi_{\alpha}(t))^{\perp}}{\vert x-\xi_{\alpha}(t)\vert^{2}}\Big)\cdot\nabla\Big(\prod_{\alpha}\psi_{\epsilon}(\vert x-\xi_{\alpha}(t)\vert)\Big)\Phi(t,x)\rho(t,x)\,\mathrm{d}x\\
&=:I_{\epsilon}^3+I_{\epsilon}^4+I_{\epsilon}^5.
\end{align*}
On the one hand, the dominated convergence  theorem implies that  $I_{\epsilon}^3$ converges to
\begin{align*}
\int_{\mathbb{R}^2}\Big(E^{\perp}(t,x)+\sum_{\alpha}\frac{(x-\xi_{\alpha}(t))^{\perp}}{\vert x-\xi_{\alpha}(t)\vert^{2}}\Big)\cdot\nabla\Phi(t,x)\rho(t,x)\,\mathrm{d}x
\end{align*}
as $\epsilon\to 0$. On the other hand, we have by  H\"older inequality
\begin{align*}
\vert I_{\epsilon}^4\vert
\le\frac{C}{\epsilon}\sum_{\alpha}\int_{\vert x-\xi_{\alpha}(t)\vert\le2\epsilon}\vert E(t,x)\vert \vert\rho(t,x)\vert\,\mathrm{d}x\le\frac{C}{\epsilon}\|E(t)\|_{\infty}\|\rho(t)\|_{p}\epsilon^{2-\frac{2}{p}}.
\end{align*}

Notice by the identity $a^{\perp}\cdot a =0$, we have
\begin{align*}
I_{\epsilon}^5=\int_{\mathbb{R}^2}\frac{1}{\epsilon}\sum_{\alpha\ne\beta}\Big[\frac{(x-\xi_{\alpha})^{\perp}}{\vert x-\xi_{\alpha}\vert^{2}}\cdot\frac{x-\xi_{\beta}}{\vert x-\xi_{\beta}\vert}\psi'(\vert x-\xi_{\beta}\vert/\epsilon)\prod_{\gamma:\gamma\ne\beta}\psi_{\epsilon}(\vert x-\xi_{\gamma}\vert)\Big]\Phi\rho(t,x)\,\mathrm{d}x.
\end{align*}
When $\vert x-\xi_{\beta}(t)\vert\le2\epsilon$, from $\vert\xi_{\alpha}(t)-\xi_{\beta}(t)\vert\ge M$, we have $\vert x-\xi_{\alpha}(t)\vert\ge M-2\epsilon$ for $\alpha\ne\beta$, therefore we have
\begin{align*}
\vert I_{\epsilon}^5\vert\le\frac{C}{\epsilon}\sum_{\beta}\int_{\vert x-\xi_{\beta}(t)\vert\le2\epsilon} \vert \rho(t,x)\vert\,\mathrm{d}x\le\frac{C}{\epsilon}\|\rho(t)\|_{p}\epsilon^{2-\frac{2}{p}}.
\end{align*}
Since $1-2/p>0$, we have that $I_{\epsilon}^i$ with $i=2,4,5$ vanish in the limit $\epsilon\to 0$. Therefore, we have proved that $\rho$ satisfies the first equation of \eqref{eq-Vortex-Wave} in the sense of distributions.

\subsection{Proof of Proposition~\ref{prn-estimate-Lk}}
Firstly, we prove two estimates along the characteristic.
\begin{Lemma}\label{lem-estimate-character}
Assume $\vert X_{\varepsilon}(s)-\xi_{\alpha,\varepsilon}(s)\vert\le\frac{M}{2}$. Then we have
\begin{align*}
\frac{1}{\vert X_{\varepsilon}(s)-\xi_{\alpha,\varepsilon}(s)\vert}&\le\varepsilon^2\frac{\mathrm{d}^{2}}{\mathrm{d}s^{2}}\vert X_{\varepsilon}(s)-\xi_{\alpha,\varepsilon}(s)\vert+\varepsilon^{-1}\vert V_{\varepsilon}(s)-\eta_{\alpha,\varepsilon}(s)\vert\\
&\quad+\vert E_{\varepsilon}(s,X_{\varepsilon}(s))\vert+\vert E_{\varepsilon}(s,\xi_{\alpha,\varepsilon}(s))\vert+\frac{3N}{M}.
\end{align*}
\end{Lemma}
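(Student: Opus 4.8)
The plan is to work with the relative position $d(s):=X_{\varepsilon}(s)-\xi_{\alpha,\varepsilon}(s)$ and the relative velocity $w(s):=V_{\varepsilon}(s)-\eta_{\alpha,\varepsilon}(s)$, and to differentiate the scalar distance $r(s):=|d(s)|$ twice; the whole point is that the Coulomb self-interaction $1/r$ is hidden inside the difference of the singular fields $F_{\varepsilon}$. First I would subtract the point-charge ODEs from the characteristic ODEs. On the time set where $0<|X_{\varepsilon}(s)-\xi_{\alpha,\varepsilon}(s)|\le M/2$, the estimate \eqref{eq-priori-positive-distance} forces $|X_{\varepsilon}(s)-\xi_{\beta,\varepsilon}(s)|\ge M/2$ for every $\beta\ne\alpha$, so $F_{\varepsilon}(s,X_{\varepsilon}(s))$ is given by the regular formula $\sum_{\beta}(X_{\varepsilon}-\xi_{\beta,\varepsilon})/|X_{\varepsilon}-\xi_{\beta,\varepsilon}|^{2}$ and is a continuous function of $s$ (as is $E_{\varepsilon}(s,X_{\varepsilon}(s))$), whence $d\in C^{1}$, $w\in C^{1}$, and $r\in C^{2}$ away from $\{d=0\}$. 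With $\gamma=1$ this gives $\dot d=w/\varepsilon$ and $\dot w=w^{\perp}/\varepsilon^{2}+\varepsilon^{-1}\Delta E+\varepsilon^{-1}\Delta F$, where $\Delta E=E_{\varepsilon}(s,X_{\varepsilon})-E_{\varepsilon}(s,\xi_{\alpha,\varepsilon})$ and $\Delta F=F_{\varepsilon}(s,X_{\varepsilon})-F_{\varepsilon}(s,\xi_{\alpha,\varepsilon})$.

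Then, writing $\hat d=d/r$, a direct computation yields
\[
\frac{\mathrm{d}}{\mathrm{d}s}r=\frac{d\cdot w}{\varepsilon r},\qquad
\varepsilon^{2}\frac{\mathrm{d}^{2}}{\mathrm{d}s^{2}}r=\frac{|w|^{2}-(\hat d\cdot w)^{2}}{r}+\frac{\hat d\cdot w^{\perp}}{\varepsilon}+\hat d\cdot\Delta E+\hat d\cdot\Delta F.
\]
Splitting off the $\beta=\alpha$ term in $F_{\varepsilon}(s,X_{\varepsilon})$ gives $\Delta F=d/r^{2}+\sum_{\beta\colon\beta\ne\alpha}\big((X_{\varepsilon}-\xi_{\beta,\varepsilon})/|X_{\varepsilon}-\xi_{\beta,\varepsilon}|^{2}-(\xi_{\alpha,\varepsilon}-\xi_{\beta,\varepsilon})/|\xi_{\alpha,\varepsilon}-\xi_{\beta,\varepsilon}|^{2}\big)$, and since $\hat d\cdot(d/r^{2})=1/r$, I would solve the displayed identity for $1/r$. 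Discarding the nonnegative term $(|w|^{2}-(\hat d\cdot w)^{2})/r\ge 0$, and using $|\hat d\cdot w^{\perp}|\le|w|=|V_{\varepsilon}(s)-\eta_{\alpha,\varepsilon}(s)|$, $|\hat d\cdot\Delta E|\le|E_{\varepsilon}(s,X_{\varepsilon}(s))|+|E_{\varepsilon}(s,\xi_{\alpha,\varepsilon}(s))|$, and — via $r\le M/2$ together with \eqref{eq-priori-positive-distance}, so that each of $|X_{\varepsilon}-\xi_{\beta,\varepsilon}|^{-1}$ and $|\xi_{\alpha,\varepsilon}-\xi_{\beta,\varepsilon}|^{-1}$ is at most $2/M$ — bounding the remaining sum by $3(N-1)/M\le 3N/M$, one arrives exactly at the claimed inequality.

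I do not expect a genuinely hard estimate anywhere: the content is the algebraic cancellation that makes the $\varepsilon^{-2}$ magnetic term harmless once one differentiates the distance $r$ rather than the vector $d$. The only point needing a little care is regularity/measurability, namely ensuring $s\mapsto r(s)$ is $C^{2}$ on the relevant set, which is precisely what the separation $|X_{\varepsilon}-\xi_{\beta,\varepsilon}|\ge M/2$ derived above provides; since the lemma is used only under the integral sign in Proposition~\ref{prn-estimate-Lk}, restricting to times with $0<|X_{\varepsilon}(s)-\xi_{\alpha,\varepsilon}(s)|\le M/2$ (the inequality being vacuous otherwise) entails no loss.
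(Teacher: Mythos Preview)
Your proof is correct and follows essentially the same approach as the paper: both arguments compute the second derivative of $|X_{\varepsilon}(s)-\xi_{\alpha,\varepsilon}(s)|$, isolate the $1/r$ coming from the $\beta=\alpha$ part of $F_{\varepsilon}(s,X_{\varepsilon})$, discard the nonnegative term $(|w|^{2}-(\hat d\cdot w)^{2})/r$, and bound the remaining $\beta\ne\alpha$ contributions by $3N/M$ via \eqref{eq-priori-positive-distance} and the hypothesis $r\le M/2$. The paper's presentation differs only cosmetically (it first writes the lower bound on $\ddot r$ and then rearranges), and your added remarks on $C^{2}$ regularity of $r$ on the set $\{0<r\le M/2\}$ are a harmless clarification.
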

\begin{proof}
By direct calculation, we have
\begin{equation*}
\frac{\mathrm{d}}{\mathrm{d}s}\vert X_{\varepsilon}(s)-\xi_{\alpha,\varepsilon}(s)\vert=\frac{X_{\varepsilon}(s)-\xi_{\alpha,\varepsilon}(s)}{\vert X_{\varepsilon}(s)-\xi_{\alpha,\varepsilon}(s)\vert}\cdot\frac{V_{\varepsilon}(s)-\eta_{\alpha,\varepsilon}(s)}{\varepsilon},
\end{equation*}
and
\begin{align*}
\frac{\mathrm{d}^{2}}{\mathrm{d}s^{2}}\vert X_{\varepsilon}(s)-\xi_{\alpha,\varepsilon}(s)\vert&=\frac{\vert V_{\varepsilon}(s)-\eta_{\alpha,\varepsilon}(s)\vert^{2}}{\varepsilon^{2}\vert X_{\varepsilon}(s)-\xi_{\alpha,\varepsilon}(s)\vert}-\frac{[( X_{\varepsilon}(s)-\xi_{\alpha,\varepsilon}(s))\cdot(V_{\varepsilon}(s)-\eta_{\alpha,\varepsilon}(s))]^{2}}{\varepsilon^{2}\vert X_{\varepsilon}(s)-\xi_{\alpha,\varepsilon}(s)\vert^{3}}\nonumber\\
&\quad+\frac{ X_{\varepsilon}(s)-\xi_{\alpha,\varepsilon}(s)}{\varepsilon^2\vert X_{\varepsilon}(s)-\xi_{\alpha,\varepsilon}(s)\vert}\cdot\Big[\frac{V_{\varepsilon}^{\perp}(s)-\eta_{\alpha,\varepsilon}^{\perp}(s)}{\varepsilon}+(E_{\varepsilon}+F_{\varepsilon})(s,X_{\varepsilon}(s))\nonumber\\
&\qquad-E_{\varepsilon}(s,\xi_{\alpha,\varepsilon}(s))-\sum_{\beta:\beta\ne\alpha}\frac{\xi_{\alpha,\varepsilon}(s)-\xi_{\beta,\varepsilon}(s)}{\vert\xi_{\alpha,\varepsilon}(s)-\xi_{\beta,\varepsilon}(s)\vert^2}\Big]\nonumber\\
&\ge-\varepsilon^{-3}\vert V_{\varepsilon}(s)-\eta_{\alpha,\varepsilon}(s)\vert-\varepsilon^{-2}\vert E_{\varepsilon}(s,X_{\varepsilon}(s))\vert-\varepsilon^{-2}\vert E_{\varepsilon}(s,\xi_{\alpha,\varepsilon}(s))\vert\nonumber\\
&\quad-\sum_{\beta:\beta\ne\alpha}\frac{\varepsilon^{-2}}{\vert\xi_{\alpha,\varepsilon}(s)-\xi_{\beta,\varepsilon}(s)\vert}+\varepsilon^{-2}\frac{X_{\varepsilon}(s)-\xi_{\alpha,\varepsilon}(s)}{\vert X_{\varepsilon}(s)-\xi_{\alpha,\varepsilon}(s)\vert}\cdot F_{\varepsilon}(s,X_{\varepsilon}(s)),
\end{align*}
By Proposition~\ref{prn-priori}, we have
\begin{align*}
&\frac{X_{\varepsilon}(s)-\xi_{\alpha,\varepsilon}(s)}{\vert X_{\varepsilon}(s)-\xi_{\alpha,\varepsilon}(s)\vert}\cdot F_{\varepsilon}(s,X_{\varepsilon}(s))\\
&\le\varepsilon^2\frac{\mathrm{d}^{2}}{\mathrm{d}s^{2}}\vert X_{\varepsilon}(s)-\xi_{\alpha,\varepsilon}(s)\vert+\varepsilon^{-1}\vert V_{\varepsilon}(s)-\eta_{\alpha,\varepsilon}(s)\vert+\vert E_{\varepsilon}(s,X_{\varepsilon}(s))\vert+\vert E_{\varepsilon}(s,\xi_{\alpha,\varepsilon}(s))\vert+\frac{N}{M},
\end{align*}
which implies
\begin{align*}
\frac{1}{\vert X_{\varepsilon}(s)-\xi_{\alpha,\varepsilon}(s)\vert}&\le\sum_{\beta:\beta\ne\alpha}\frac{1}{\vert X_{\varepsilon}(s)-\xi_{\beta,\varepsilon}(s)\vert}+\varepsilon^2\frac{\mathrm{d}^{2}}{\mathrm{d}s^{2}}\vert X_{\varepsilon}(s)-\xi_{\alpha,\varepsilon}(s)\vert+\varepsilon^{-1}\vert V_{\varepsilon}(s)-\eta_{\alpha,\varepsilon}(s)\vert\\
&\quad+\vert E_{\varepsilon}(s,X_{\varepsilon}(s))\vert+\vert E_{\varepsilon}(s,\xi_{\alpha,\varepsilon}(s))\vert+\frac{N}{M}.
\end{align*}
By \eqref{eq-priori-positive-distance} and the assumption in the lemma, we have $\vert X_{\varepsilon}(s)-\xi_{\beta,\varepsilon}(s)\vert\ge M/2$ if $\beta\ne\alpha$, then the lemma follows.
\end{proof}
\begin{Lemma}\label{lem-estimate-pointwise-energy}
We have
\begin{align*}
\Big\vert\frac{\mathrm{d}}{\mathrm{d}s}\mathbf{h}_{\varepsilon}(s)\Big\vert
&\le C\varepsilon^{-1}\left(\sqrt{\mathbf{h}_{\varepsilon}(s)}(\vert E_{\varepsilon}(s,X_{\varepsilon}(s))\vert+1)+\sum_{\alpha}\frac{1}{\vert X_{\varepsilon}-\xi_{\alpha,\varepsilon}\vert}\right).
\end{align*}
\end{Lemma}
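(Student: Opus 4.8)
The plan is to differentiate $\mathbf{h}_{\varepsilon}(s)=h_{\varepsilon}(s,X_{\varepsilon}(s),V_{\varepsilon}(s))$ along the characteristic and to exploit the algebraic structure that has been built into $h_{\varepsilon}$. First I would record
\begin{equation*}
\nabla_{x}h_{\varepsilon}=\sum_{\alpha}\Big(\frac{x-\xi_{\alpha,\varepsilon}}{\vert x-\xi_{\alpha,\varepsilon}\vert}-\frac{x-\xi_{\alpha,\varepsilon}}{\vert x-\xi_{\alpha,\varepsilon}\vert^{2}}\Big),\quad\nabla_{v}h_{\varepsilon}=v,\quad\partial_{t}h_{\varepsilon}=-\sum_{\alpha}\Big(\frac{x-\xi_{\alpha,\varepsilon}}{\vert x-\xi_{\alpha,\varepsilon}\vert}-\frac{x-\xi_{\alpha,\varepsilon}}{\vert x-\xi_{\alpha,\varepsilon}\vert^{2}}\Big)\cdot\dot{\xi}_{\alpha,\varepsilon},
\end{equation*}
and then substitute the characteristic ODE. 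Along the flow $X_{\varepsilon}(s)$ never collides with a point charge (repulsive interaction, as is standard for the Plasma-Charge model), so every expression below makes sense and $F_{\varepsilon}(s,X_{\varepsilon}(s))=\sum_{\alpha}(X_{\varepsilon}-\xi_{\alpha,\varepsilon})/\vert X_{\varepsilon}-\xi_{\alpha,\varepsilon}\vert^{2}$. Since $V_{\varepsilon}\cdot V_{\varepsilon}^{\perp}=0$ the large magnetic term disappears, and one is left with
\begin{equation*}
\frac{\mathrm{d}}{\mathrm{d}s}\mathbf{h}_{\varepsilon}(s)=\sum_{\alpha}\Big(\frac{X_{\varepsilon}-\xi_{\alpha,\varepsilon}}{\vert X_{\varepsilon}-\xi_{\alpha,\varepsilon}\vert}-\frac{X_{\varepsilon}-\xi_{\alpha,\varepsilon}}{\vert X_{\varepsilon}-\xi_{\alpha,\varepsilon}\vert^{2}}\Big)\cdot\frac{V_{\varepsilon}-\eta_{\alpha,\varepsilon}}{\varepsilon}+\frac{V_{\varepsilon}\cdot(E_{\varepsilon}+F_{\varepsilon})(s,X_{\varepsilon})}{\varepsilon}.
\end{equation*}

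The key step is the cancellation of the $\vert X_{\varepsilon}-\xi_{\alpha,\varepsilon}\vert^{-2}$ contributions. Writing out $F_{\varepsilon}(s,X_{\varepsilon})$ and combining $\tfrac{V_{\varepsilon}\cdot F_{\varepsilon}}{\varepsilon}$ with the $-\,(X_{\varepsilon}-\xi_{\alpha,\varepsilon})\vert X_{\varepsilon}-\xi_{\alpha,\varepsilon}\vert^{-2}\cdot\tfrac{V_{\varepsilon}-\eta_{\alpha,\varepsilon}}{\varepsilon}$ pieces, the parts containing $V_{\varepsilon}$ cancel exactly, and the only genuinely singular term that survives is $\sum_{\alpha}\tfrac{(X_{\varepsilon}-\xi_{\alpha,\varepsilon})\cdot\eta_{\alpha,\varepsilon}}{\varepsilon\vert X_{\varepsilon}-\xi_{\alpha,\varepsilon}\vert^{2}}$, whose modulus is $\le\tfrac{C}{\varepsilon}\sum_{\alpha}\vert X_{\varepsilon}-\xi_{\alpha,\varepsilon}\vert^{-1}$ by the uniform bound $\sum_{\alpha}\vert\eta_{\alpha,\varepsilon}(s)\vert\le C$ of Proposition~\ref{prn-priori}. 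This is precisely the role of the logarithmic potential $-\ln\vert x-\xi_{\alpha,\varepsilon}\vert$ in $h_{\varepsilon}$: its gradient $-(x-\xi_{\alpha,\varepsilon})/\vert x-\xi_{\alpha,\varepsilon}\vert^{2}$, contracted with $V_{\varepsilon}/\varepsilon$, annihilates the $V_{\varepsilon}\cdot F_{\varepsilon}/\varepsilon$ singularity, while the linear part $\vert x-\xi_{\alpha,\varepsilon}\vert$ contributes only bounded gradients and is harmless here.

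It then remains to bound the regular remainders $\sum_{\alpha}\tfrac{X_{\varepsilon}-\xi_{\alpha,\varepsilon}}{\vert X_{\varepsilon}-\xi_{\alpha,\varepsilon}\vert}\cdot\tfrac{V_{\varepsilon}-\eta_{\alpha,\varepsilon}}{\varepsilon}$ and $\tfrac{V_{\varepsilon}\cdot E_{\varepsilon}(s,X_{\varepsilon})}{\varepsilon}$. For these I would use that $(X_{\varepsilon}-\xi_{\alpha,\varepsilon})/\vert X_{\varepsilon}-\xi_{\alpha,\varepsilon}\vert$ is a unit vector, that $\vert V_{\varepsilon}\vert\le 2\sqrt{\mathbf{h}_{\varepsilon}}$ (valid since $h_{\varepsilon}\ge\vert v\vert^{2}/2$), that $\vert\eta_{\alpha,\varepsilon}\vert\le C$, and the normalisation $h_{\varepsilon}\ge K>1$ to absorb the additive constants into $\sqrt{\mathbf{h}_{\varepsilon}}$; this produces the bound $\tfrac{C}{\varepsilon}\sqrt{\mathbf{h}_{\varepsilon}}(\vert E_{\varepsilon}(s,X_{\varepsilon})\vert+1)$. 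Summing the two groups of estimates yields the claimed inequality. There is no real analytic obstacle; the only delicate point is the bookkeeping of the $\vert X_{\varepsilon}-\xi_{\alpha,\varepsilon}\vert^{-2}$ terms so that the cancellation is displayed correctly, together with the (standard) fact that in the repulsive regime the plasma stays at positive distance from the charges, which legitimises the pointwise differentiation.
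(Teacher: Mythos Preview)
Your proposal is correct and follows essentially the same route as the paper: differentiate $\mathbf{h}_{\varepsilon}$ along the characteristic, observe that $V_{\varepsilon}\cdot V_{\varepsilon}^{\perp}=0$ and that the $V_{\varepsilon}\cdot F_{\varepsilon}/\varepsilon$ term cancels against the logarithmic-gradient contribution, leaving only $\sum_{\alpha}\tfrac{(X_{\varepsilon}-\xi_{\alpha,\varepsilon})\cdot\eta_{\alpha,\varepsilon}}{\varepsilon\vert X_{\varepsilon}-\xi_{\alpha,\varepsilon}\vert^{2}}$ as the singular part, and then invoke $\vert\eta_{\alpha,\varepsilon}\vert\le C$ from Proposition~\ref{prn-priori} together with $\vert V_{\varepsilon}\vert\le 2\sqrt{\mathbf{h}_{\varepsilon}}$ and $\mathbf{h}_{\varepsilon}\ge K>1$. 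The paper's proof is simply a terser version of exactly this computation.
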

\begin{proof}
Differentiating $\mathbf{h}_{\varepsilon}(s)$ to obtain
\begin{align*}
\frac{\mathrm{d}}{\mathrm{d}s}\mathbf{h}_{\varepsilon}(s)&=\varepsilon^{-1}V_{\varepsilon}\cdot(E_{\varepsilon}+F_{\varepsilon})(s,X_{\varepsilon})+\sum_{\alpha}\left(\frac{X_{\varepsilon}-\xi_{\alpha,\varepsilon}}{\vert X_{\varepsilon}-\xi_{\alpha,\varepsilon}\vert}-\frac{X_{\varepsilon}-\xi_{\alpha,\varepsilon}}{\vert X_{\varepsilon}-\xi_{\alpha,\varepsilon}\vert^2}\right)\cdot\frac{V_{\varepsilon}-\eta_{\alpha,\varepsilon}}{\varepsilon}\\
&=\varepsilon^{-1}V_{\varepsilon}\cdot E_{\varepsilon}(s,X_{\varepsilon})+\sum_{\alpha}\left(\frac{X_{\varepsilon}-\xi_{\alpha,\varepsilon}}{\vert X_{\varepsilon}-\xi_{\alpha,\varepsilon}\vert}\cdot\frac{V_{\varepsilon}-\eta_{\alpha,\varepsilon}}{\varepsilon}+\frac{X_{\varepsilon}-\xi_{\alpha,\varepsilon}}{\vert X_{\varepsilon}-\xi_{\alpha,\varepsilon}\vert^2}\cdot\frac{\eta_{\alpha,\varepsilon}}{\varepsilon}\right).
\end{align*}
The lemma follows from the fact $\vert\eta_{\alpha,\varepsilon}(s)\vert\le C$ by \eqref{eq-priori-1} with $C$ depending only on $K_0,K_1$.
\end{proof}

\begin{proof}[Proof of Proposition~\ref{prn-estimate-Lk}]
We split the integral domains in $L_{k,\varepsilon}(t)$ into two parts to obtain
\begin{align*}
&\int_0^t\iint_{\mathbb{R}^2\times\mathbb{R}^2}\frac{\mathbf{h}_{\varepsilon}(s)^{k/2}f_{\varepsilon}^0}{\vert X_{\varepsilon}(s)-\xi_{\alpha,\varepsilon}(s)\vert}\,\mathrm{d}x\,\mathrm{d}v\,\mathrm{d}s\\
&\le\int_0^t\iint_{\vert X_{\varepsilon}(s)-\xi_{\alpha,\varepsilon}(s)\vert\le\frac{M}{2}}+\int_0^t\iint_{\vert X_{\varepsilon}(s)-\xi_{\alpha,\varepsilon}(s)\vert>\frac{M}{2}}\frac{\mathbf{h}_{\varepsilon}(s)^{k/2}f_{\varepsilon}^0}{\vert X_{\varepsilon}(s)-\xi_{\alpha,\varepsilon}(s)\vert}\,\mathrm{d}x\,\mathrm{d}v\,\mathrm{d}s\\
&\le\int_0^t\iint_{\vert X_{\varepsilon}(s)-\xi_{\alpha,\varepsilon}(s)\vert\le\frac{M}{2}}\frac{\mathbf{h}_{\varepsilon}(s)^{k/2}f_{\varepsilon}^0}{\vert X_{\varepsilon}(s)-\xi_{\alpha,\varepsilon}(s)\vert}\,\mathrm{d}x\,\mathrm{d}v\,\mathrm{d}s+\frac{2}{M}H_{k,\varepsilon}(t)t.
\end{align*}

By Lemma~\ref{lem-estimate-character}, we have
\begin{align*}
&\int_0^t\iint_{\vert X_{\varepsilon}(s)-\xi_{\alpha,\varepsilon}(s)\vert\le\frac{M}{2}}\frac{\mathbf{h}_{\varepsilon}(s)^{k/2}f_{\varepsilon}^0}{\vert X_{\varepsilon}(s)-\xi_{\alpha,\varepsilon}(s)\vert}\,\mathrm{d}x\,\mathrm{d}v\,\mathrm{d}s\\
&\le\int_0^t\iint_{\mathbb{R}^2\times\mathbb{R}^2}\mathbf{h}_{\varepsilon}(s)^{k/2}f_{\varepsilon}^0\Big(\varepsilon^2\frac{\mathrm{d}^{2}}{\mathrm{d}s^{2}}\vert X_{\varepsilon}(s)-\xi_{\alpha,\varepsilon}(s)\vert+\varepsilon^{-1}\vert V_{\varepsilon}(s)-\eta_{\alpha,\varepsilon}(s)\vert\\
&\qquad\qquad\qquad\qquad\qquad+\vert E_{\varepsilon}(s,X_{\varepsilon}(s))\vert+\vert E_{\varepsilon}(s,\xi_{\alpha,\varepsilon}(s))\vert+\frac{3N}{M}\Big)\,\mathrm{d}x\,\mathrm{d}v\,\mathrm{d}s\\
&=:I_{\varepsilon}^1(t)+I_{\varepsilon}^2(t)+I_{\varepsilon}^3(t)+I_{\varepsilon}^4(t)+\frac{3N}{M}H_{k,\varepsilon}(t)t.
\end{align*}
Now we estimate each term above. 

{\bf Estimate on $I_{\varepsilon}^1(t)$:} Firstly, using integration by parts in time, we obtain
\begin{align*}
I_{\varepsilon}^1(s)&=\varepsilon^2\iint_{\mathbb{R}^2\times\mathbb{R}^2} f_{\varepsilon}^0\Big(\int_0^t\mathbf{h}_{\varepsilon}(s)^{k/2}\frac{\mathrm{d}^2}{\mathrm{d}s^2}\vert X_{\varepsilon}(s)-\xi_{\alpha,\varepsilon}(s)\vert \,\mathrm{d}s\Big)\,\mathrm{d}x\,\mathrm{d}v\\
&=\varepsilon^2\iint_{\mathbb{R}^2\times\mathbb{R}^2} f_{\varepsilon}^0\Big\{\Big[\mathbf{h}_{\varepsilon}(s)^{k/2}\frac{\mathrm{d}}{\mathrm{d}s}\vert X_{\varepsilon}(s)-\xi_{\alpha,\varepsilon}(s)\vert \Big]\Big\vert^t_0\\
&\qquad\qquad\qquad-\int_0^t\frac{\mathrm{d}}{\mathrm{d}s}\mathbf{h}_{\varepsilon}(s)^{k/2}\frac{\mathrm{d}}{\mathrm{d}s}\vert X_{\varepsilon}(s)-\xi_{\alpha,\varepsilon}(s)\vert \,\mathrm{d}s\Big\}\,\mathrm{d}x\,\mathrm{d}v,
\end{align*}
by direct calculation, we have
\begin{equation*}
\frac{\mathrm{d}}{\mathrm{d}s}\vert X_{\varepsilon}(s)-\xi_{\alpha,\varepsilon}(s)\vert=\frac{X_{\varepsilon}(s)-\xi_{\alpha,\varepsilon}(s)}{\vert X_{\varepsilon}(s)-\xi_{\alpha,\varepsilon}(s)\vert }\cdot\frac{V_{\varepsilon}(s)-\eta_{\alpha,\varepsilon}(s)}{\varepsilon}\le C\varepsilon^{-1}\mathbf{h}_{\varepsilon}(s)^{1/2},
\end{equation*}
combining with Lemma~\ref{lem-estimate-pointwise-energy}, we have
\begin{equation*}
\Big[\mathbf{h}_{\varepsilon}(s)^{k/2}\frac{\mathrm{d}}{\mathrm{d}s}\vert X_{\varepsilon}(s)-\xi_{\alpha,\varepsilon}(s)\vert
\Big]\Big\vert^t_0\le C\varepsilon^{-1}(\mathbf{h}_{\varepsilon}(t)^{(k+1)/2}+\mathbf{h}_{\varepsilon}(0)^{(k+1)/2})
\end{equation*}
and
\begin{align*}
&\int_0^t\frac{\mathrm{d}}{\mathrm{d}s}\mathbf{h}_{\varepsilon}(s)^{k/2}\frac{\mathrm{d}}{\mathrm{d}s}\vert X_{\varepsilon}(s)-\xi_{\alpha,\varepsilon}(s)\vert \,\mathrm{d}s\\
&\le C\varepsilon^{-2}\int_0^t\mathbf{h}_{\varepsilon}(s)^{\frac{k-1}{2}}
\left(\sqrt{\mathbf{h}_{\varepsilon}(s)}\vert E_{\varepsilon}(s,X_{\varepsilon}(s))\vert+\sum_{\alpha}\frac{1}{\vert X_{\varepsilon}(s)-\xi_{\alpha,\varepsilon}(s)\vert}\right)\,\mathrm{d}s\\
&\le C\varepsilon^{-2}\int_0^t\mathbf{h}_{\varepsilon}(s)^{\frac{k}{2}}\vert E_{\varepsilon}(s,X_{\varepsilon}(s))\vert+\sum_{\alpha}\frac{\mathbf{h}_{\varepsilon}(s)^{\frac{k-1}{2}}
}{\vert X_{\varepsilon}(s)-\xi_{\alpha,\varepsilon}(s)\vert}\,\mathrm{d}s.
\end{align*}
Hence we have
\begin{align*}
I_{\varepsilon}^1(s)&\le C\varepsilon\iint_{\mathbb{R}^2\times\mathbb{R}^2} f_{\varepsilon}^0\Big(\mathbf{h}_{\varepsilon}(t)^{(k+1)/2}+\mathbf{h}_{\varepsilon}(0)^{(k+1)/2}\Big)\,\mathrm{d}x\,\mathrm{d}v\\
&\quad+C\iint_{\mathbb{R}^2\times\mathbb{R}^2} f_{\varepsilon}^0\left\{\int_0^t\mathbf{h}_{\varepsilon}(s)^{\frac{k}{2}}\vert E_{\varepsilon}(s,X_{\varepsilon}(s))\vert+\sum_{\alpha}\frac{\mathbf{h}_{\varepsilon}(s)^{\frac{k-1}{2}}}{\vert X_{\varepsilon}(s)-\xi_{\alpha,\varepsilon}(s)\vert}\,\mathrm{d}s\right\}\,\mathrm{d}x\,\mathrm{d}v\\
&\le C\Big(\varepsilon H_{k+1,\varepsilon}(t)+\int_0^t\iint_{\mathbb{R}^2\times\mathbb{R}^2} \mathbf{h}(s)^{k/2}f_{\varepsilon}^0\vert E_{\varepsilon}(s,X_{\varepsilon}(s))\vert \,\mathrm{d}x\,\mathrm{d}v\,\mathrm{d}s\\
&\quad+\sum_{\alpha}\int_0^t\iint_{\mathbb{R}^2\times\mathbb{R}^2}\frac{\mathbf{h}_{\varepsilon}(s)^{(k-1)/2}f_{\varepsilon}^0}{\vert X_{\varepsilon}(s)-\xi_{\alpha,\varepsilon}(s)\vert}\,\mathrm{d}x\,\mathrm{d}v\,\mathrm{d}s\Big)\\
&\le C\left(\varepsilon H_{k+1,\varepsilon}(t)+\|f_{\varepsilon}^0\|_{\infty}^{\frac{l-k}{l+2}}H_{l,\varepsilon}(t)^{\frac{k+2}{l+2}}\int_0^t\|E_{\varepsilon}(s)\|_{\frac{l+2}{l-k}}\,\mathrm{d}s+L_{k-1,\varepsilon}(t)\right),
\end{align*}
where we have used the H\"older inequality and Lemma~\ref{lem-interpo-Hk}.

{\bf Estimate on $I_{\varepsilon}^2(t)$:} By definition, we have $\vert V_{\varepsilon}(s)-\eta_{\alpha,\varepsilon}(s)\vert\le C\sqrt{\mathbf{h}_{\varepsilon}(s)}$. Hence
\begin{equation*}
I_{\varepsilon}^2=\varepsilon^{-1}\int_0^t\iint_{\mathbb{R}^2\times\mathbb{R}^2} \mathbf{h}_{\varepsilon}(s)^{k/2}f_{\varepsilon}^0\vert V_{\varepsilon}(s)-\eta_{\alpha,\varepsilon}(s)\vert\,\mathrm{d}x\,\mathrm{d}v\,\mathrm{d}s\le C\varepsilon^{-1}H_{k+1,\varepsilon}(t)t.
\end{equation*}

{\bf Estimate on $I_{\varepsilon}^3(s)$:} By  H\"older inequality and Lemma~\ref{lem-interpo-Hk} we have
\begin{align*}
I_{\varepsilon}^3&=\int_0^t\iint_{\mathbb{R}^2\times\mathbb{R}^2} \mathbf{h}_{\varepsilon}(s)^{k/2}f_{\varepsilon}^0\vert E_{\varepsilon}(s,X_{\varepsilon}(s))\vert\,\mathrm{d}x\,\mathrm{d}v\,\mathrm{d}s\\
&\le\int_0^t\Big\|\int_{\mathbb{R}^2}h_{\varepsilon}^{k/2}f_{\varepsilon}(t,x,v)\,\mathrm{d}v\Big\|_{\frac{l+2}{k+2}}\|E_{\varepsilon}(s)\|_{\frac{l+2}{l-k}}\,\mathrm{d}s\\
&\le C\|f_{\varepsilon}^0\|_{\infty}^{\frac{l-k}{l+2}}H_{l,\varepsilon}(t)^{\frac{k+2}{l+2}}\int_0^t\|E_{\varepsilon}(s)\|_{\frac{l+2}{l-k}}\,\mathrm{d}s.
\end{align*}

{\bf Estimate on $I_{\varepsilon}^4(t)$:}  We have
\begin{align*}
I_{\varepsilon}^4&=\int_0^t\iint_{\mathbb{R}^2\times\mathbb{R}^2} \mathbf{h}_{\varepsilon}(s)^{k/2}f_{\varepsilon}^0\vert E_{\varepsilon}(s,\xi_{\alpha,\varepsilon}(s))\vert\,\mathrm{d}x\,\mathrm{d}v\,\mathrm{d}s\\
&\le CH_{k,\varepsilon}(t)\int_0^t\vert E_{\varepsilon}(s,\xi_{\alpha,\varepsilon}(s))\vert \,\mathrm{d}s\le CH_{k,\varepsilon}(t)L_{0,\varepsilon}(t).
\end{align*}

Combining the estimates on $I_{\varepsilon}^i(t)$ for $i=1,2,3,4$, we have
\begin{align*}
&\int_0^t\iint_{\mathbb{R}^2\times\mathbb{R}^2}\frac{\mathbf{h}_{\varepsilon}(s)^{k/2}f_{\varepsilon}^0}{\vert X_{\varepsilon}(s)-\xi_{\alpha,\varepsilon}(s)\vert}\,\mathrm{d}x\,\mathrm{d}v\,\mathrm{d}s\\
&\le C\left(\varepsilon H_{k+1,\varepsilon}(t)+\|f_{\varepsilon}^0\|_{\infty}^{\frac{l-k}{l+2}}H_{l,\varepsilon}(t)^{\frac{k+2}{l+2}}\int_0^t\|E_{\varepsilon}(s)\|_{\frac{l+2}{l-k}}\,\mathrm{d}s+L_{k-1,\varepsilon}(t)\right)\\
&\qquad+C\varepsilon^{-1}H_{k+1,\varepsilon}(t)t+CH_{k,\varepsilon}(t)L_{0,\varepsilon}(t)+\frac{3N+2}{M}H_{k,\varepsilon}(t)t.
\end{align*}
Summing the above inequalities with index $\alpha$, the conclusion follows.
\end{proof}

\section*{Acknowledgements}

\bibliographystyle{abbrv}
{\footnotesize\bibliography{ref}}

\begin{thebibliography}{10}

\bibitem{AW21}
A.~Arroyo-Rabasa and R.~Winter.
\newblock Debye screening for the stationary {V}lasov-{P}oisson equation in
  interaction with a point charge.
\newblock {\em Comm. Partial Differential Equations}, 46(8):1569--1584, 2021.

\bibitem{Ars73}
A.~A. Arsenev.
\newblock Existence in the large of a weak solution of {V}lasov's system of
  equations.
\newblock {\em Dokl. Akad. Nauk SSSR}, 213:761--763, 1973.

\bibitem{BOS09}
M.~Bostan.
\newblock The {V}lasov-{P}oisson system with strong external magnetic field.
  {F}inite {L}armor radius regime.
\newblock {\em Asymptot. Anal.}, 61(2):91--123, 2009.

\bibitem{Bre00}
Y.~Brenier.
\newblock Convergence of the {V}lasov-{P}oisson system to the incompressible
  {E}uler equations.
\newblock {\em Comm. Partial Differential Equations}, 25(3-4):737--754, 2000.

\bibitem{CM10}
S.~Caprino and C.~Marchioro.
\newblock On the plasma-charge model.
\newblock {\em Kinet. Relat. Models}, 3(2):241--254, 2010.

\bibitem{CMMP12}
S.~Caprino, C.~Marchioro, E.~Miot, and M.~Pulvirenti.
\newblock On the attractive plasma-charge system in 2-d.
\newblock {\em Comm. Partial Differential Equations}, 37(7):1237--1272, 2012.

\bibitem{Del91}
J.-M. Delort.
\newblock Existence de nappes de tourbillon en dimension deux.
\newblock {\em J. Amer. Math. Soc.}, 4(3):553--586, 1991.

\bibitem{DMS15}
L.~Desvillettes, E.~Miot, and C.~Saffirio.
\newblock Polynomial propagation of moments and global existence for a
  {V}lasov-{P}oisson system with a point charge.
\newblock {\em Ann. Inst. H. Poincar\'{e} Anal. Non Lin\'{e}aire},
  32(2):373--400, 2015.

\bibitem{DL89ODE}
R.~J. DiPerna and P.-L. Lions.
\newblock Ordinary differential equations, transport theory and {S}obolev
  spaces.
\newblock {\em Invent. Math.}, 98(3):511--547, 1989.

\bibitem{DG23}
M.~Donati and L.~Godard-Cadillac.
\newblock H\"{o}lder regularity for collapses of point-vortices.
\newblock {\em Nonlinearity}, 36(11):5773--5818, 2023.

\bibitem{FR16}
F.~Filbet and L.~M. Rodrigues.
\newblock Asymptotically stable particle-in-cell methods for the
  {V}lasov-{P}oisson system with a strong external magnetic field.
\newblock {\em SIAM J. Numer. Anal.}, 54(2):1120--1146, 2016.

\bibitem{FS98}
E.~Fr\'{e}nod and E.~Sonnendr\"{u}cker.
\newblock Homogenization of the {V}lasov equation and of the {V}lasov-{P}oisson
  system with a strong external magnetic field.
\newblock {\em Asymptot. Anal.}, 18(3-4):193--213, 1998.

\bibitem{FS00}
E.~Fr\'{e}nod and E.~Sonnendr\"{u}cker.
\newblock Long time behavior of the two-dimensional {V}lasov equation with a
  strong external magnetic field.
\newblock {\em Math. Models Methods Appl. Sci.}, 10(4):539--553, 2000.

\bibitem{FS01}
E.~Fr\'{e}nod and E.~Sonnendr\"{u}cker.
\newblock The finite {L}armor radius approximation.
\newblock {\em SIAM J. Math. Anal.}, 32(6):1227--1247, 2001.

\bibitem{GLS14}
O.~Glass, C.~Lacave, and F.~Sueur.
\newblock On the motion of a small body immersed in a two-dimensional
  incompressible perfect fluid.
\newblock {\em Bull. Soc. Math. France}, 142(3):489--536, 2014.

\bibitem{GLS16}
O.~Glass, C.~Lacave, and F.~Sueur.
\newblock On the motion of a small light body immersed in a two dimensional
  incompressible perfect fluid with vorticity.
\newblock {\em Comm. Math. Phys.}, 341(3):1015--1065, 2016.

\bibitem{GMS18}
O.~Glass, A.~Munnier, and F.~Sueur.
\newblock Point vortex dynamics as zero-radius limit of the motion of a rigid
  body in an irrotational fluid.
\newblock {\em Invent. Math.}, 214(1):171--287, 2018.

\bibitem{GS19}
O.~Glass and F.~Sueur.
\newblock Dynamics of several rigid bodies in a two-dimensional ideal fluid and
  convergence to vortex systems.
\newblock {\em arXiv: Analysis of PDEs}, 2019.

\bibitem{GS99}
F.~Golse and L.~Saint-Raymond.
\newblock The {V}lasov-{P}oisson system with strong magnetic field.
\newblock {\em J. Math. Pures Appl. (9)}, 78(8):791--817, 1999.

\bibitem{GS03}
F.~Golse and L.~Saint-Raymond.
\newblock The {V}lasov-{P}oisson system with strong magnetic field in
  quasineutral regime.
\newblock {\em Math. Models Methods Appl. Sci.}, 13(5):661--714, 2003.

\bibitem{GNPS05}
T.~Goudon, J.~Nieto, F.~Poupaud, and J.~Soler.
\newblock Multidimensional high-field limit of the electrostatic
  {V}lasov-{P}oisson-{F}okker-{P}lanck system.
\newblock {\em J. Differential Equations}, 213(2):418--442, 2005.

\bibitem{HW22}
R.~M. H\"ofer and R.~Winter.
\newblock A fast point charge interacting with the screened vlasov-poisson
  system, 2022 (Soon to be published in Analysis \& PDE).

\bibitem{LM09}
C.~Lacave and E.~Miot.
\newblock Uniqueness for the vortex-wave system when the vorticity is constant
  near the point vortex.
\newblock {\em SIAM J. Math. Anal.}, 41(3):1138--1163, 2009.

\bibitem{LM21}
C.~Lacave and E.~Miot.
\newblock The vortex-wave system with gyroscopic effects.
\newblock {\em Ann. Sc. Norm. Super. Pisa Cl. Sci. (5)}, 22(1):1--30, 2021.

\bibitem{LP91}
P.-L. Lions and B.~Perthame.
\newblock Propagation of moments and regularity for the {$3$}-dimensional
  {V}lasov-{P}oisson system.
\newblock {\em Invent. Math.}, 105(2):415--430, 1991.

\bibitem{Loe06}
G.~Loeper.
\newblock Uniqueness of the solution to the {V}lasov-{P}oisson system with
  bounded density.
\newblock {\em J. Math. Pures Appl. (9)}, 86(1):68--79, 2006.

\bibitem{MMP11}
C.~Marchioro, E.~Miot, and M.~Pulvirenti.
\newblock The {C}auchy problem for the 3-{D} {V}lasov-{P}oisson system with
  point charges.
\newblock {\em Arch. Ration. Mech. Anal.}, 201(1):1--26, 2011.

\bibitem{MP91}
C.~Marchioro and M.~Pulvirenti.
\newblock On the vortex-wave system.
\newblock In {\em Mechanics, analysis and geometry: 200 years after
  {L}agrange}, North-Holland Delta Ser., pages 79--95. North-Holland,
  Amsterdam, 1991.

\bibitem{Mio16}
E.~Miot.
\newblock On the gyrokinetic limit for the two-dimensional {V}lasov-{P}oisson
  system.
\newblock {\em arXiv:1603.04502v1}, 2016.

\bibitem{Mio16CMP}
E.~Miot.
\newblock A uniqueness criterion for unbounded solutions to the
  {V}lasov-{P}oisson system.
\newblock {\em Comm. Math. Phys.}, 346(2):469--482, 2016.

\bibitem{Mio19}
E.~Miot.
\newblock The gyrokinetic limit for the {V}lasov-{P}oisson system with a point
  charge.
\newblock {\em Nonlinearity}, 32(2):654--677, 2019.

\bibitem{PW21}
B.~Pausader and K.~Widmayer.
\newblock Stability of a point charge for the {V}lasov-{P}oisson system: the
  radial case.
\newblock {\em Comm. Math. Phys.}, 385(3):1741--1769, 2021.

\bibitem{PWY22}
B.~Pausader, K.~Widmayer, and J.~Yang.
\newblock Stability of a point charge for the repulsive vlasov-poisson system.
\newblock 2022. arXiv:2207.05644.

\bibitem{Pfa92}
K.~Pfaffelmoser.
\newblock Global classical solutions of the {V}lasov-{P}oisson system in three
  dimensions for general initial data.
\newblock {\em J. Differential Equations}, 95(2):281--303, 1992.

\bibitem{Pou02}
F.~Poupaud.
\newblock Diagonal defect measures, adhesion dynamics and {E}uler equation.
\newblock {\em Methods Appl. Anal.}, 9(4):533--561, 2002.

\bibitem{Sai00}
L.~Saint-Raymond.
\newblock The gyrokinetic approximation for the {V}lasov-{P}oisson system.
\newblock {\em Math. Models Methods Appl. Sci.}, 10(9):1305--1332, 2000.

\bibitem{Sai02}
L.~Saint-Raymond.
\newblock Control of large velocities in the two-dimensional gyrokinetic
  approximation.
\newblock {\em J. Math. Pures Appl. (9)}, 81(4):379--399, 2002.

\bibitem{Sch95}
S.~Schochet.
\newblock The weak vorticity formulation of the {$2$}-{D} {E}uler equations and
  concentration-cancellation.
\newblock {\em Comm. Partial Differential Equations}, 20(5-6):1077--1104, 1995.

\bibitem{UO78}
S.~Ukai and T.~Okabe.
\newblock On classical solutions in the large in time of two-dimensional
  {V}lasov's equation.
\newblock {\em Osaka Math. J.}, 15(2):245--261, 1978.

\bibitem{WZ23VPenergy}
J.~Wu and X.~Zhang.
\newblock The energy conservation of {V}lasov-{P}oisson systems.
\newblock {\em Acta Math. Sci. Ser. B (Engl. Ed.)}, 43(2):668--674, 2023.

\bibitem{WZ23}
J.~Wu and X.~Zhang.
\newblock Moment propagation of the plasma-charge model with a time-varying
  magnetic field.
\newblock {\em J.Stat.Phys.}, 190(183), 2023.

\bibitem{XZ21}
H.~Xiong and X.~Zhang.
\newblock Time evolution of a plasma-charge system with infinite mass and
  velocities.
\newblock {\em J. Differential Equations}, 278:1--49, 2021.

\end{thebibliography}
\end{document}